\documentclass[11pt]{article} 
\usepackage[margin=1in]{geometry} 
\usepackage{amsmath, amssymb, amsthm} 
\usepackage{mathrsfs}
\usepackage{bbm} 
\usepackage{hyperref} 
\usepackage[dvipsnames]{xcolor} 
\usepackage{dsfont} 
\usepackage{natbib}
\usepackage{tikz} 
\usepackage{enumitem}
\usepackage{cleveref}
\usepackage{thmtools, thm-restate}
\usepackage{subcaption} 

\usepackage[textwidth=3cm]{todonotes}

\newcommand{\PMmx}{\ensuremath{{\mathbb{P}\left(M=m \mid X=x\right)}}}
\newcommand{\PMmxm}{\ensuremath{{\mathbb{P}\left(M=m \mid X^{(m)}=x^{(m)}\right)}}}
\newcommand{\PMzerox}{\ensuremath{{\mathbb{P}\left(M=0 \mid X=x\right)}}}

\newcommand{\E}{\ensuremath{{\mathbb E}}}
\newcommand{\R}{\ensuremath{{\mathbb R}}}
\newcommand{\Prob}{\ensuremath{{\mathbb P}}}

\newcommand{\data}{\mathcal{S}_n}

\newcommand{\Pjointn}{\ensuremath{\mathbb{P}_{\theta^*}^n}}
\newcommand{\Pthetajointn}{\ensuremath{\mathbb{P}_{\theta}^n}}
\newcommand{\Pjoint}{\ensuremath{\mathbb{P}_{\theta^*}}}
\newcommand{\Pthetajoint}{\ensuremath{\mathbb{P}_{\theta}}}
\newcommand{\Gauss}[2]{\mathcal{N}\left(#1,#2\right)}

\newcommand{\KLMAR}{\ensuremath{\widetilde{\textnormal{KL}}}}
\newcommand{\KLMARsq}{\ensuremath{\widetilde{\textnormal{V}}}}
\newcommand{\KL}{\ensuremath{{\textnormal{KL}}}}
\newcommand{\He}{\ensuremath{\textnormal{H}}}
\newcommand{\HeMAR}{\ensuremath{\widetilde{\textnormal{H}}}}
\newcommand{\Af}{\ensuremath{\rho}_{1/2}}
\newcommand{\AfMAR}{\ensuremath{\tilde{\rho}_{1/2}}}
\newcommand{\supp}{\ensuremath{{\textnormal{supp}}}}
\newcommand{\logm}{\ensuremath{\log_{-}}}

\usepackage{algorithm}
\usepackage{algpseudocode}

\newcommand{\CI}{C_1}
\newcommand{\CII}{C_2}
\newcommand{\CIII}{C_3}
\newcommand{\CIV}{C_4}

\newcommand{\NI}{N_1}
\newcommand{\NII}{N_2}

\title{Asymptotics of Nonparametric Estimation under general non-monotone MAR missingness: A Bayesian Approach}

\author{Jeffrey Näf$^1$, Badr-Eddine Chérief-Abdellatif$^2$, \vspace{0.2cm} \\
        $ ^1$Research Institute for Statistics and Information Science,\\
        University of Geneva \\
        $ ^2$CNRS, LPSM, Sorbonne Université, Université Paris Cité \vspace{0.1cm} \\
}
\date{} 

\newtheorem{thm}{Theorem}[section] 
\newtheorem{asm}{Assumption}[section] 
\newtheorem{prop}[thm]{Proposition} 
\newtheorem{lemma}[thm]{Lemma} 
\newtheorem{cor}[thm]{Corollary} 
\newtheorem{dfn}{Definition}[section]

\allowdisplaybreaks 

\begin{document}

\maketitle

\begin{abstract}

Missing values are ubiquitous in statistical practice, with potentially detrimental consequences for any statistical analysis. As such, a wealth of methods and theoretical results have been developed in the last decades. However, many questions remain open, in particular in the case of general non-monotone missing at random (MAR), where nonparametric results are still lacking. In this paper, we extend nonparametric Bayesian theory to this MAR setting. We introduce a general theorem of posterior contraction under MAR and an additional positivity condition and apply this result to density estimation as well as regression problems. In particular, we show that, despite the missing values, the complete-data density can be estimated with the minimax posterior contraction rate up to logarithmic factors. To the best of our knowledge, this is the first nonparametric result showing that the complete-data distribution can be consistently estimated under Rubin's MAR definition. As a consequence, we obtain an algorithm that takes incomplete data and returns a sample from a consistent estimate of the complete-data distribution.

\end{abstract}


\section{Introduction}

Missing data are a prevalent issue in modern data science applications and an active area of research. In the presence of missing data, one no longer observes complete data points, but only partial values, along with the positions of the observed entries. In general, this requires introducing a model for the missing data mechanism (MDM), which governs which values are missing. A classical and effective approach, introduced by Rubin~\cite{Rubin1976}, is to model the MDM using conditional distributions that satisfy the \emph{Missing At Random} (MAR) property - meaning that the probability of missingness depends only on the observed values - and to parameterize the MDM separately from the data model. Under this modeling choice, the MDM becomes \emph{ignorable} in the sense that maximizing the likelihood of both the observed values and their positions over the joint model yields exactly the same estimator for the data parameter $\theta$ as maximizing the marginal likelihood of the observed values alone. This principle, which also extends to Bayesian inference when prior independence is assumed between the data and MDM parameters, has greatly simplified inference and computations in the field of missing data analysis.

\vspace{0.2cm}
Despite the intuitive appeal and widespread adoption of the “ignorability” principle, which allows researchers and practitioners to estimate $\theta$ without modeling the potentially complex missing mechanism, its theoretical foundations remained unsettled for decades. Indeed, while a MAR MDM model renders the missing mechanism ignorable in the definition of the maximum likelihood estimator (MLE) $\widetilde{\theta}_n$, this does not in itself guarantee the statistical validity of inference based on the so-called ignorable likelihood. Rubin's influential formulation was persuasive and has shaped much of the applied and methodological literature on missing data; yet rigorous mathematical justifications were largely absent until relatively recently. As emphasized by~\cite{Takai2013}, many foundational texts - including the famous monograph ~\cite{little2019statistical} - present inference based on the incomplete-data likelihood as if it naturally inherits the large-sample properties of complete-data inference, without providing formal proofs of consistency or asymptotic normality for the resulting estimators. In a remarkable contribution, \cite{Takai2013} addressed this gap by proving that, under standard regularity conditions, the MLE ignoring the MDM $\widetilde{\theta}_n$ is indeed consistent and asymptotically normal, provided the true missing data mechanism is itself MAR. In this sense, the result of \cite{Takai2013} provides a theoretical validation of the long-standing claim that the MAR nature of the true missing data mechanism is the key condition under which ignorable likelihood-based inference is reliable.

\vspace{0.2cm}
While this settles the case of the MLE $\widetilde{\theta}_n$ for regular parametric models, several fundamental questions remain open:
\begin{itemize}
    \item First, does Bayesian inference, under comparable conditions, enjoy similar guarantees ?
    \item Second, can the statistical validity of inference based on the ignorable likelihood extend beyond the parametric setting ?
\end{itemize}
These questions motivate the present work. We show that the rich Bayesian theory in conjunction with the natural use of the Kullback-Leibler (KL) divergence allows us to obtain general nonparametric convergence results under MAR missingness. We apply this to nonparametric regression with missing covariates and missing response, respectively, as well as to nonparametric density estimation under general non-monotone missingness. In particular, we show that it is possible to nonparametrically estimate the complete-data density under general non-monotone MAR missingness and a natural positivity assumption. We thus naturally arrive at a method that is able to use MAR missing data to produce new samples from the (estimated) complete-data distribution. Though the missingness literature is vast, these appear to be the first nonparametric consistency results for general, non-monotone MAR. As the concept of MAR is nearly 50 years old and is seen by some as a solved issue, this might seem surprising. Indeed, mirroring the discussion of MAR in the MLE case, papers often claim that "X is valid under MAR". However, as discussed in \cite{whatismeant, näf2024goodimputationmarmissingness} and others, this impression likely stems, in part, from confusions about the MAR condition itself. Theoretical results have indeed been obtained under stronger missingness conditions, such as the probability of missingness depending on an always observed subset of the data. Moreover, a wealth of theoretical results emerged in the case of structured problems, such as specific regression problems with missingness, often in conjunction with such simplified missing mechanisms; see, for instance, \cite{EmpiricalLik1, EmpiricalLik2, EmpiricalLik3, Responsequantileimputation, ELW2023, imputationpoweredinference}. However, to the best of our knowledge, consistency results under general MAR outside of (parametric) MLE have not been developed before. In fact, even trying to extend the ignoring principle to M-Estimation yields inconsistent estimators in general, see, e.g., \cite{Mestimatormissingvalues}. 

\vspace{0.2cm}
The remainder of the paper is organized as follows: In Section \ref{sec_Background}, we present a detailed background on missing values, the MAR condition and ignorability, and introduce our notation. After introducing the MAR condition and relevant notation, we discuss the related literature in more detail and outline our contributions in Section \ref{sec_literature_lontributions}. Section \ref{sec_contractionresults} then presents the general posterior contraction results under MAR missingness, while Section \ref{sec_examples} applies these results to density estimation and nonparametric regression on $\R^d$. Finally, Section \ref{sec_empirical} provides a small simulation study, and Section \ref{sec_conclusion} concludes. 



\begin{figure*}[h!]
    \centering
\begin{tikzpicture}


\node at (0,0) {$ \begin{pmatrix}
x_{1,1} & x_{1,2} & x_{1,3} \\
\textrm{NA} & x_{2,2} & x_{2,3} \\
\textrm{NA} & \textrm{NA} & x_{3,3}
\end{pmatrix}$};

\node at (4,0) {$\begin{pmatrix}
x_{1,1} & x_{1,2} & x_{1,3} \\
 \textrm{NA} & x_{1,2} & x_{2,3} \\
x_{3,1} & \textrm{NA} & x_{3,3}
\end{pmatrix}$};


\node at (8,0) {$\begin{pmatrix}
x_{1,1} & x_{1,2} & x_{1,3} \\
 x_{1,2} &\textrm{NA} & x_{2,3} \\
\textrm{NA} & x_{3,2} &\textrm{NA} 
\end{pmatrix}$};

\end{tikzpicture}
    \caption{Three Data matrices with missing values, each with three different patterns. Each contains the fully observed pattern $M=0$ and does not contain the completely unobserved pattern $M\neq \mathbbm{1}$.}
    \label{fig:illustrationfocond}
\end{figure*}

\section{Background and Notation}\label{sec_Background}

Let $X_1, \dots, X_n$ be random vectors taking values in a measurable space $(\mathcal{X}, \mathcal{A})$, $\mathcal{X} \subset \mathbb{R}^d$. The $X_i$'s are assumed to be i.i.d.\ (independent and identically distributed) from a distribution $P_{\theta^*}$ belonging to a statistical model $\{P_\theta\}_{\theta \in \Theta}$. The parameter space $\Theta$ is equipped with a suitable $\sigma$-algebra and with a semi-metric $d$, and may be either finite- or infinite-dimensional. This general formulation encompasses, in particular, high-dimensional and nonparametric settings, such as models where $\Theta$ is a class of probability density functions with respect to a given reference measure. Furthermore, we assume that each $P_\theta$ admits a density $p_\theta$ with respect to Lebesgue's measure on $\mathcal{X}$ and that the mapping $(\theta, x) \mapsto p_\theta(x)$ is measurable.

\vspace{0.2cm}
Unfortunately, we do not observe the $X_i$'s directly. Instead, for each $i \in \{1, \dots, n\}$, we only observe a subset of its $d$ components. Let $M_i \in \{0,1\}^d$ be a random binary mask indicating which coordinates of $X_i$ are observed:
\[
(M_i)_j = 
\begin{cases}
0 & \text{if the } j\text{-th component of } X_i \text{ is observed}, \\
1 & \text{if the } j\text{-th component of } X_i \text{ is missing}.
\end{cases}
\]
We assume that the pairs $(X_i, M_i)$ are i.i.d.\ with a joint distribution $\mathbb{P}_{(X,M)}$ on $\mathcal{X} \times \{0,1\}^d$ which is assumed to be absolutely continuous with respect to the product of Lebesgue’s and counting measures. We denote by $X_i^{(M_i)}$ the restriction of $X_i$ to its observed components, that is, the subvector containing only the entries for which $(M_i)_j = 0$. The available data thus consists of the partially observed vectors along with their positions 
$$
\data = \left\{ \left(X_1^{(M_1)},M_1\right),  \dots, \left(X_n^{(M_n)},M_n\right) \right\} \, .
$$
Our goal is to estimate the true parameter $\theta^* \in \Theta$ with only access to $\data$.

\subsection{Missingness at Random}
We focus in this paper on a specific family of missingness mechanisms usually referred to as \emph{Missing at Random}, which is encoded in the following assumption:

\begin{asm}[MAR]
\label{asm_true_MDM}
    The true conditional distribution of $M$ given $X$ is \emph{Missing at Random} (MAR), meaning that for almost any $x\in\mathcal{X}$, the probability mass function of $M$ given $X=x$ only depends on its observed components: for almost any $x\in\mathcal{X}$, for any $m\in\{0,1\}^d$, 
    $$
   \PMmx = \PMmxm .
    $$
\end{asm}
The MAR property used above, formally stating that the missingness mechanism (i.e.\ the conditional distribution of $M$ given $X$) does not depend on the missing values themselves (given the observed ones), is a particular instance (Missing Always at Random) of the several variants of Missingness at Random that exist in the literature, see, e.g., \cite{whatismeant3, näf2024goodimputationmarmissingness} and the literature therein. We note that Assumption \ref{asm_true_MDM} is essentially the population version of the original MAR version of \cite{Rubin1976}. Crucially, it is the weakest MAR assumption compared to the alternatives used in the literature. For instance, it is often assumed that $\Prob\left(M=m\mid X\right)$ only depends on a set of fully observed variables, a much stronger assumption (see, e.g., the discussion in \cite{näf2024goodimputationmarmissingness}). 

\vspace{0.2cm}
We do not assume anything on the structure of the patterns $M$, other than that the completely empty pattern $M=\mathbbm{1}$ (where $\mathbbm{1}=(1,\ldots,1)$) has probability zero:

\begin{asm}[Disregarding the empty pattern]
\label{asm_no_empty_MDM}
    We assume that we almost never observe nothing, that is $\Prob(M=\mathbbm{1})=0$.
\end{asm}

In particular, missingness can be non-monotone as outlined in Figure \ref{fig:illustrationfocond}. While the first example contains monotone missingness, the second example already breaks monotonicity, having $X_2$ missing in the second pattern and in the third pattern $X_3$. Still in the simplified case where missingness $\Prob\left(M=m\mid X\right)$ depends only on $X_3$, it is still relatively straightforward to learn $\Prob\left(M=m\mid X\right)$ and apply reweighting estimators. However, under MAR, missingness might depend on $X_2$ in the second pattern and $X_1$ in the third pattern. This example is discussed in Section \ref{Sec_Motivating}. The situation is even more complicated in the third example, where in addition to non-monotonicity there is no fully observed variable anymore.

\vspace{0.2cm}
We also introduce a technical assumption, standard in the literature, that requires the so-called \emph{propensity score}, defined as the function $x \mapsto \PMzerox$ giving the conditional probability of being fully observed, to be bounded away from zero:
\begin{asm}[Positivity of the propensity score]
\label{asm_positive_MDM}
    The propensity score $x\mapsto \PMzerox$ is bounded away from zero: there exists some constant $\delta \geq 0$ such that for almost any $x$, we have $\PMzerox >  \delta$.
\end{asm}

For $\delta > 0$, this is a rather standard assumption for the theoretical analysis of missing values (see, e.g., \cite{MARinverseweighting, Malinsky2022}). For some of our results, it will also be possible to take $\delta=0$, though the rates we derive explicitly depend on $1/\delta$, which is understood to be infinite if $\delta=0$.

\vspace{0.2cm}
We end this subsection with a useful notation: we denote \( \Prob_{\theta} \) the joint distribution of \( (X,M) \) when \( X \sim P_\theta \) and \( M \) follows the unknown conditional missingness mechanism. In particular, $\Prob_{(X,M)} = \Pjoint$. Of course, all distributions $\Prob_{\theta}$ are unknown since the true conditional missingness mechanism is itself unknown.


\subsection{Ignorability of likelihood-based inference under MAR}

Likelihood-based inference can be applied in the presence of missing values. Since the pair $(X,M)$ is random, the central idea is to model its joint distribution by combining the complete-data model $\{P_\theta\}_\theta$ for $X$ with a model for the missingness mechanism parameterized by some $\phi$. Given the partially observed dataset $\data$, the maximum likelihood estimator is obtained by maximizing the likelihood of $\data$ with respect to $(\theta,\phi)$.

\vspace{0.2cm}
A fundamental result in the missing-data literature, known as the \emph{ignorability} property of the MLE and established by \cite{Rubin1976}, shows that inference based on the estimator
\[
\widetilde{\theta}_n = \arg\max_{\theta\in\Theta} \sum_{i=1}^{n} \log p_{\theta}^{(M_i)}\!\left(X_i^{(M_i)}\right) \, ,
\]
where $p_\theta^{(m)}(x)$ denotes the marginal density of $x$ corresponding to the components observed under mask $m\in\{0,1\}^d$, is equivalent to the maximum likelihood estimator computed from the entire observed dataset $\data$ under the MAR (Assumption~\ref{asm_true_MDM}), provided that $\theta$ and $\phi$ are distinct, meaning that they do not share any parameters. This maximum (ignorable) likelihood estimator (referred to simply as the MLE from now on) optimizes over $\theta$ while ignoring the missingness mechanism. It is particularly convenient in practice, as it avoids the often difficult task of specifying a model for the missingness mechanism.

\vspace{0.2cm}
Similarly, under the MAR Assumption~\ref{asm_true_MDM}, the distinctness condition on $\theta$ and $\phi$, and assuming independent priors for $\theta$ and $\phi$, Bayesian inference for $\theta$ is also ignorable. The posterior density over $\theta$ (again ignoring the missingness mechanism) is given by
\[
    \pi(\theta \mid \data) \propto \prod_{i=1}^{n} p_\theta^{(M_i)}\!\left(X_i^{(M_i)}\right) \, \pi(\theta),
\]
where $\pi(\cdot)$ denotes the density of a prior distribution $\Pi$ with respect to some reference measure. We denote by $\Pi(\cdot \mid \data)$ the corresponding posterior distribution.

\subsection{Notation}

We now introduce and summarize the notation used throughout the paper.

\begin{itemize}
\item We assume to observe masked i.i.d. data along with their positions, $$\data = \left\{ \left(X_1^{(M_1)},M_1\right),  \dots, \left(X_n^{(M_n)},M_n\right) \right\}.$$
   \item $\Pjoint$ is the joint distribution of $(X,M)$. We denote the conditional distribution of $M \mid X$ as $\PMmx$ for $m \in \{0,1\}^d$ and almost all $x \in \mathcal{X}$, which is well defined on $\mathcal{X} \subset \R^d$. We assume that $\Pjoint$ is absolutely continuous with respect to the product of Lebesgue's and counting measures, with joint density $(x,m) \mapsto  p_{\theta^*}(x) \PMmx$.
   \item We assume a (semi-)metric space $(\Theta, d)$, inducing a model class $(P_{\theta})_{\theta \in \Theta}$ for the distribution of $X$. We assume $P_{\theta^*} \in (P_{\theta})_{\theta \in \Theta}$ (i.e., the well-specified case). In addition, $(\Theta, d)$ also induces a model class $(\Pthetajoint)_{\theta \in \Theta}$ of $(X,M)$, whereby $\Pthetajoint$ is the distribution induced by the density $(x,m) \mapsto  p_{\theta}(x) \PMmx$.
   \item For $m \in \{0,1\}^d$, $X^{(m)}$ is the subvector of $X$ corresponding to the variables such that $m_j=0$, while $X^{(-m)}$ is the subvector of $X$ corresponding to the variables such that $m_j=1$. We denote the corresponding marginal densities by $p_{\theta}^{(m)}/p_{\theta}^{(-m)}$.
   \item We further assume a prior distribution $\Pi$ on $\Theta$, equipped with the Borel $\sigma$-algebra induced by $d$. We then denote by $\Pi(\cdot \mid \data)$ the posterior distribution,
   \begin{align*}
       \Pi(B \mid \data) = \frac{\int_B \prod_{i=1}^{n} p_{\theta}^{(M_i)}(X_i^{(M_i)}) d\Pi(\theta)}{\int \prod_{i=1}^{n} p_{\theta}^{(M_i)}(X_i^{(M_i)})d\Pi(\theta)},
   \end{align*}
   for all $B$ in the Borel $\sigma$-algebra induced by $d$, which defines a probability measure on $\Theta$. 
    \item For complete data, we denote by $\KL(P_{\theta_1} \|P_{\theta_2})$, the KL divergence between $P_{\theta_1},P_{\theta_2}$,
    \begin{align*}
        \KL(P_{\theta_1} \| P_{\theta_2})=\E_{X \sim P_{\theta_1}}\left[ \log \left( \frac{p_{\theta_1}(X)}{p_{\theta_2}(X)} \right) \right].
    \end{align*}
    We note that this expectation is always defined and nonnegative, though $\KL(P_{\theta_1} \| P_{\theta_2})=\infty$, if $P_{\theta_1}(\{p_{\theta_2}(X)=0\}) > 0$.
    \item For complete data, the Hellinger distance between $P_{\theta_1}, P_{\theta_2}$ is given as:
\[
\He^2(p_{\theta_1}, p_{\theta_2}) = \int \left( \sqrt{p_{\theta_1}}(x)- \sqrt{p_{\theta_2}}(x)\right)^2 \mathrm{d}x.
\]
\item For a subset $\Theta_0 \subset \Theta$, we let $N(\varepsilon_n, \Theta_0, d)$ be the usual covering number, i.e., the minimal covering of $\Theta_0$ by open $d-$ balls.
\item An important metric $d$ on $\Theta$ will be the one induced by the Hellinger distance on $(P_{\theta})_{\theta \in \Theta}$, that is,
\[
d_H(\theta_1, \theta_2):= \He(p_{\theta_1},p_{\theta_2}).
\]
\end{itemize}


\section{Problem Statement and Contributions}\label{sec_literature_lontributions}

In this section, we first dive deeper into the related literature and then discuss our contributions.

\subsection{Related Literature}



The term MAR has been a frequent cause of confusion in the literature, as indicated by the number of papers that simply discuss the definitions of MAR, e.g., \cite{whatismeant, whatismeant3, näf2024goodimputationmarmissingness}. It is often claimed in the literature that \textit{"ignorable likelihood-based inference is valid under MAR"}. However, such statements are often ambiguous. In fact, frequentist validity, in the sense of consistency, has not been formally established in general models and has only relatively recently been addressed for regular parametric complete-data models. What has been clearly established by \cite{Rubin1976} is the equivalence of likelihood-based inference using the joint likelihood, modeling $\data$, and the ignorable likelihood, provided the \textit{model} for the missing data mechanism is MAR (and the parameters are distinct) - regardless of the nature of the true missing mechanism. However, whether this approach remains statistically valid when the \emph{true} missing data mechanism is MAR (as formulated in Assumption~\ref{asm_true_MDM}) is a different and, until recently, unresolved question.

\vspace{0.2cm}
Conversely, while there is now rich parametric and nonparametric theory developing for the handling of missing values, guarantees for the general non-monotone MAR case are notably sparse. This is not entirely surprising. As discussed in detail in \cite{näf2024goodimputationmarmissingness}, the MAR in Assumption \ref{asm_true_MDM} is rather complicated to handle. In particular, in Section \ref{Sec_Motivating} we demonstrate that, already in three dimensions, complex distribution shifts can arise when changing from one pattern to another. While there are interesting results for the parametric case in MLE estimation (\cite{Takai2013}) and imputation (\cite{RobinsImputationTheory, Guan2024}), we are not aware of general nonparametric results in this case. For instance, in the context of M-estimation, \cite{Mestimatormissingvalues} show that a simple ignoring estimator is no longer guaranteed to be consistent under MAR. An intuitive remedy are approaches that weight the data using the inverse of an estimate of the conditional probability $\PMmx$, giving rise to inverse probability weighting (IPW) estimators. However, under non-monotone MAR, this approach is not straightforward. For instance, \cite{MARinverseweighting} discusses the difficulty of IPW estimators in this setting and proposes a parametric model of missingness probabilities that allows weighting of the data to obtain consistency. However, this parametric form appears rather limiting, and IPW-based estimators that require an estimate of the pattern probabilities can be difficult to handle in our setting, where it might be possible to observe a pattern only infrequently. In fact, even in simpler settings IPW is known to suffer from instability when propensities are close to zero (\cite{kang2007demystifying, han2019quantile}). As a consequence, the theory of IPW estimators seems to have been developed mainly in the context of monotone missingness (see, e.g., \cite{Seaman2018-IPW}). A notable exception to this, and one of the very few papers that is able to provide asymptotic guarantees for M-estimation under general non-monotone patterns outside of MCAR, is the seminal paper \cite{Malinsky2022}. However, they are studying the ``no-self-censoring'' mechanism that is fundamentally different from MAR. Moreover, their approach again requires a rather complex reweighting by estimated propensity scores. In contrast, the ignoring approach we adopt is straightforward and completely circumvents the need for any weighting. In another important recent approach \cite{nonparametricpmm} estimate the joint distribution $\Pjoint$ of a multivariate sample under missing values, using kernel density estimators and identification conditions. They provide general consistency and even asymptotic normality of estimators derived from this distributional estimate. This is close in spirit to the density estimator we obtain as a natural application of our theory. However, they again mainly focus on monotone missingness. While they also discuss the possibility of non-monotone missingness, their ``donor-based'' approach requires rather complex identification conditions that do not hold under non-monotone MAR. In particular, their method could not be used in the example in Section \ref{Sec_Motivating}. This difficulty arises because the approach requires to learn (conditional) densities for one pattern, from other patterns, or ``donors'', and the procedure needs to be adapted to the assumed condition. Thus, while their approach is promising, it also demonstrates the difficulty of non-monotone missingness. In contrast, our approach is straightforward and valid even under MAR. On a technical note, just as in the case of complete data, the Bayesian density estimation approach also has the advantage of being able to adapt to higher rates of smoothness compared to the kernel density estimator, see, e.g., \cite[Chapter 9]{Fundamentals}. Finally, \cite{chen2022pattern} generalizes several prior identification restrictions, such as the donor-based restrictions of \cite{nonparametricpmm}, into a single framework. In this approach, the so-called "pattern graphs" encode the missingness assumptions, which can even be missing not at random (MNAR). This elegant construction allows one to build IPW and even imputation-based (M-) estimators that are consistent and asymptotically Gaussian. While this is another viable alternative to the ignoring approach we pursue, it comes at the price of assuming a specific graph, and, as before, it is unclear how to apply this to general MAR situations, such as the example in Section \ref{Sec_Motivating}.




\subsection{Contributions}





In regular parametric models with complete data, the frequentist validity of Bayesian procedures is well established. It is well known that when the MLE is consistent and asymptotically normal, the posterior distribution typically concentrates around the true value and satisfies a Bernstein–von Mises (BvM) theorem, under mild conditions on the prior. These properties can be informally viewed as the Bayesian counterparts of consistency and asymptotic normality, ensuring that the posterior behaves like a good frequentist estimator. However, in the presence of missing data, it remains unclear whether these properties still hold. In particular, it is an open question whether Bayesian inference based on the (ignorable) likelihood remains valid when the data are not fully observed, even under the MAR assumption. 

\vspace{0.2cm}
Beyond parametric models, frequentist validation of Bayesian methods has been developed through the so-called \emph{prior mass and testing} framework. This approach provides general conditions ensuring posterior concentration in nonparametric models with quantifiable rates. The key ingredients are (a) sufficient prior mass in a KL neighborhood of the truth; (b) the existence of suitable tests; and (c) prior concentration on a sieve. This theory is now well understood in standard settings, but its extension to models with missing data remains largely unexplored. The question we consider here is thus whether such general nonparametric results can be extended to the case where only incomplete data are observed, and the true missingness mechanism is MAR. In particular, we are interested in the question whether the complete-data density can be recovered in this case.

\vspace{0.2cm}
A fundamental result asserts that under i.i.d.\ data and when $\Theta$ is a space of probability measures metrized by the Hellinger distance, the existence of suitable tests in (b) is guaranteed (see, e.g., \cite[Appendix B]{Fundamentals}). We show that, somewhat surprisingly, this remains true under MAR missingness and a positivity condition. We achieve this by constructing a specific test for any combination of densities, inspired by the ideas in \cite[Appendix B]{Fundamentals}. Moreover, since neither the prior nor the sieve is changed by missing values in our framework, (c) holds iff it holds in the complete data case. However, again somewhat surprisingly, the prior mass in the KL neighborhood might not hold with missing values, even if it holds in the complete case. As such, while (b) follows for the Hellinger distance, and (c) can be checked with complete data, (a) needs to be carefully investigated with missing data. We show that, nonetheless, (c) can be verified in the case of Bayesian regression and density estimation on $\R^d$. Making use of the connection between the KL divergence, likelihood and the MAR condition, our general results allow to derive a density estimator that is straightforward to implement, while attaining the minimax contraction rate.

Our contributions are thus fourfold:
\begin{itemize}
    \item[1.] We extend a general posterior contraction result to the case of non-monotone MAR missingness.
    \item[2.] We prove that even under non-monotone MAR missingness, suitable tests always exist for the Hellinger distance.
    \item[3.] We apply these results to show that 
    \begin{itemize}
        \item  density estimation with Dirichlet priors under a Hölder condition reaches a posterior contraction rate that corresponds to the minimax rate of estimation up to log factors,
        \item Bayesian nonparametric regression is consistent under standard assumptions, even if the covariates are masked by non-monotone MAR missingness.
    \end{itemize}

    \item[4.] We derive and implement an algorithm that takes incomplete data and produces a density estimate of the complete-data distribution. It is straightforward to sample from this density estimate and to calculate any parameter of interest. Through the use of the classical ignorability approach, this algorithm avoids the instability of inverse probability weighting under extreme propensity scores.
\end{itemize}
We note that our applications only scratch the surface as the results in 1. and 2. might be much more broadly applicable.

\section{Posterior concentration rates under MAR}\label{sec_contractionresults}


This section provides the main results of the paper. We first introduce a new variant of the Kullback-Leibler divergence adapted to the missing data setting and then state a general theorem formulated under the celebrated prior mass and testing framework. Finally, we specialize this result to the case of estimation in Hellinger distance.

\subsection{Definitions}


We first introduce the following adapted definition of the KL divergence:
\begin{dfn}[KL relative to $\mathbb{P}_{\theta^*}$]\label{definition_KL0}
We define for any $\theta \in \Theta$,
\[
\widetilde{\textnormal{KL}}(P_{\theta^*} \| P_{\theta}):=\E_{(X,M)\sim\mathbb{P}_{\theta^*}}\left[ \log \left( \frac{p_{\theta^*}^{(M)}(X^{(M)})}{p_{\theta}^{(M)}(X^{(M)})} \right) \right].
\]
$\widetilde{\textnormal{KL}}$ is referred to as the Kullback-Leibler (KL) divergence relative to $\mathbb{P}_{\theta^*}$.
\end{dfn}

Although never formally introduced before, this notion of KL divergence relative to $\mathbb{P}_{\theta^*}$ is quite natural as it is simply defined as the expected log-ratio of observed densities.
Note that the expectation in $\KLMAR$ is taken over $(M, X^{(M)})$, the incomplete data we actually observe, while we would like to consider the distance between the complete-data models $P_{\theta^*}$ and $P_{\theta}$. Despite this difficulty, $\KLMAR$ is nonnegative under MAR, which cannot be extended to MNAR missing mechanisms.

\begin{restatable}[KL relative to $\mathbb{P}_{\theta^*}$]{prop}{KLdef}\label{definition_KL}
Under Assumptions \ref{asm_true_MDM} and \ref{asm_no_empty_MDM}, the KL divergence relative to $\mathbb{P}_{\theta^*}$ satisfies
\[
\widetilde{\textnormal{KL}}(P_{\theta^*} \| P_{\theta}) \geq 0 \quad \text{for any } \theta \in \Theta, 
\]
and
\[
\widetilde{\textnormal{KL}}(P_{\theta^*} \| P_{\theta^*}) = 0 .
\]
In particular $\theta \mapsto \widetilde{\textnormal{KL}}(P_{\theta^*} \| P_{\theta})$ is minimized at $\theta^*$.  

\end{restatable}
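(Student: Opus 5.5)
The plan is to show that, under MAR, $\KLMAR(P_{\theta^*}\|P_\theta)$ is the KL divergence between the two joint distributions of the observed data $(X^{(M)},M)$ induced by $\theta^*$ and by $\theta$. Nonnegativity then follows from the nonnegativity of an ordinary KL divergence. The first step is to compute the density of the observed data under $\Pthetajoint$. For a fixed pattern $m\neq\mathbbm{1}$, the joint density of $(X^{(m)}, M=m)$ is obtained by integrating out the missing coordinates:
\[
q_\theta(x^{(m)},m):=\int p_\theta(x)\,\PMmx \, dx^{(-m)} .
\]
By Assumption \ref{asm_true_MDM}, $\PMmx=\PMmxm$ does not depend on $x^{(-m)}$, so it factors out of the integral and
\[
q_\theta(x^{(m)},m)=p_\theta^{(m)}(x^{(m)})\,\PMmxm .
\]
This is exactly where MAR enters. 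Here $\PMmxm$ is the true mechanism, regarded as a function of $x^{(m)}$, and it is common to all $\theta$. Assumption \ref{asm_no_empty_MDM} lets us ignore the pattern $m=\mathbbm{1}$, for which the marginal $p^{(\mathbbm{1})}_\theta$ would be degenerate. Since that pattern is a null set under $\Pjoint$, it does not contribute to the expectation.

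Next, on the set where $\PMmxm>0$ (which carries all the $\Pjoint$-mass of $\{M=m\}$), the mechanism factors cancel in the ratio:
\[
\frac{p_{\theta^*}^{(m)}(x^{(m)})}{p_{\theta}^{(m)}(x^{(m)})}=\frac{q_{\theta^*}(x^{(m)},m)}{q_\theta(x^{(m)},m)} .
\]
We therefore get $\KLMAR(P_{\theta^*}\|P_\theta)=\KL(Q_{\theta^*}\|Q_\theta)$, where $Q_\theta$ is the law of $(X^{(M)},M)$ under $\Pthetajoint$. This law is a probability measure on the disjoint union over $m\neq\mathbbm{1}$ of the spaces $\R^{d-|m|}$, with density $q_\theta$ with respect to Lebesgue measure times counting measure. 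It has total mass one because the $q_\theta(\cdot,m)$ integrate, summed over $m$, to $\int p_\theta\sum_m \PMmx=1$.

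Gibbs' inequality (Jensen applied to $-\log$) then gives $\KL(Q_{\theta^*}\|Q_\theta)\ge 0$. The case $\theta=\theta^*$ gives a zero integrand, so $\KLMAR(P_{\theta^*}\|P_{\theta^*})=0$, and hence $\theta^*$ is a minimizer.

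The main obstacle is measure-theoretic care rather than any real difficulty, and it has two parts:
\begin{itemize}
\item The expectation defining $\KLMAR$ must be well defined, possibly equal to $+\infty$. I would handle this as the paper does for the standard KL: the negative part $\E[(\log(q_\theta/q_{\theta^*}))_+]$ is bounded by $\E[q_\theta/q_{\theta^*}]\le 1$, so it is finite.
\item The MAR identity only holds for almost every $x$. I would use Fubini, together with the measurability of $(\theta,x)\mapsto p_\theta(x)$, to pass from this almost-everywhere statement to the factorisation of the integral for almost every $x^{(m)}$.
\end{itemize}
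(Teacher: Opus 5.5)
Your proposal is correct and follows essentially the same route as the paper. The paper also uses MAR to pull $\Prob(M=m\mid x^{(m)})$ out of the $x^{(-m)}$-integral. It then applies $\log y\le y-1$, which is Gibbs' inequality written out inline, to the observed-data density $p_\theta^{(m)}(x^{(m)})\Prob(M=m\mid x^{(m)})$, and uses MAR a second time to show these densities integrate, summed over $m$, to one. Recasting this as $\KLMAR(P_{\theta^*}\|P_\theta)=\KL(Q_{\theta^*}\|Q_\theta)$ is a cosmetic repackaging. Your remarks on the integrability of the integrand's negative part and on the empty pattern add care that the paper leaves implicit.
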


The KL divergence relative to $\Pjoint$ thus mirrors the role of the KL divergence relative to the data generating process in complete-data settings with a misspecified model, where $\theta^*$ is not inherently defined but emerges as the minimizer of the relative KL. Remarkably, under MAR missing data, the true parameter $\theta^*$ naturally minimizes $\widetilde{\textnormal{KL}}(P_{\theta^*} \| P_{\theta})$ and becomes the unique minimizer if we also assume a positive probability of observing the full data:

\begin{restatable}[$\KLMAR$ Divergence Nature]{prop}{KLloss}\label{informationloss_KL}
Under Assumptions \ref{asm_true_MDM}, \ref{asm_no_empty_MDM}, for any $\theta \in \Theta$, it holds that
    \begin{align*}
     0 \leq   \KLMAR(P_{\theta^*}\| P_{\theta}) \leq  \KL(P_{\theta^*}\| P_{\theta}).
    \end{align*}
    Moreover, $\KL(P_{\theta^*}\| P_{\theta}) > 0$ implies $\KLMAR(P_{\theta^*}\| P_{\theta})>0$ under the additional Assumption \ref{asm_positive_MDM}.
    In this case, we thus have
\[
\widetilde{\textnormal{KL}}(P_{\theta^*} \| P_{\theta}) = 0 \quad \text{if and only if} \quad P_{\theta} = P_{\theta^*} .
\]
\end{restatable}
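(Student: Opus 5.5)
We will decompose the full-data KL divergence along the mask, using the chain rule for KL divergences and the MAR factorisation of the joint density. Under Assumption \ref{asm_true_MDM}, the joint density of $(X,M)$ under $\Pjoint$ factorises as $p_{\theta^*}(x)\,\PMmxm$. Writing $x=(x^{(m)},x^{(-m)})$ and $p_\theta(x)=p_\theta^{(m)}(x^{(m)})\,p_\theta(x^{(-m)}\mid x^{(m)})$, we have for each $m$
\[
\log\frac{p_{\theta^*}(X)}{p_\theta(X)}=\log\frac{p_{\theta^*}^{(m)}(X^{(m)})}{p_\theta^{(m)}(X^{(m)})}+\log\frac{p_{\theta^*}(X^{(-m)}\mid X^{(m)})}{p_{\theta}(X^{(-m)}\mid X^{(m)})}.
\]

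The first step is to compute $\KL(P_{\theta^*}\|P_\theta)$. We write it as $\E_{\Pjoint}[\log(p_{\theta^*}(X)/p_\theta(X))]$, since the marginal of $X$ under $\Pjoint$ is $P_{\theta^*}$. Splitting this on $M=m$ and using the identity above gives
\[
\KL(P_{\theta^*}\|P_\theta)=\KLMAR(P_{\theta^*}\|P_\theta)+\sum_m \E_{\Pjoint}\Bigl[\mathbbm{1}\{M=m\}\log\frac{p_{\theta^*}(X^{(-m)}\mid X^{(m)})}{p_{\theta}(X^{(-m)}\mid X^{(m)})}\Bigr].
\]
By MAR, conditionally on $X^{(m)}=x^{(m)}$ and $M=m$, the missing block $X^{(-m)}$ still has density $p_{\theta^*}(\cdot\mid x^{(m)})$. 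This holds because $\PMmxm$ does not depend on $x^{(-m)}$, so the conditioning on $M=m$ cancels in Bayes' formula. Hence the $m$-th summand equals
\[
\int \PMmxm\, p^{(m)}_{\theta^*}(x^{(m)})\,\KL\bigl(p_{\theta^*}(\cdot\mid x^{(m)})\,\|\,p_\theta(\cdot\mid x^{(m)})\bigr)\,dx^{(m)}\ \ge 0.
\]
This already gives $\KLMAR\le \KL$.

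The second step is $\KLMAR\ge0$, which we obtain by the same conditioning, now applied to the observed block. We write
\[
\KLMAR=\sum_m \int \PMmxm\, p^{(m)}_{\theta^*}\log\bigl(p^{(m)}_{\theta^*}/p^{(m)}_\theta\bigr).
\]
The weights $\PMmxm$ are not constant, so this is not a sum of KL divergences, and nonnegativity does not follow termwise. Instead we apply Jensen to the whole observed-data likelihood. By MAR, the density of the observed pair $(X^{(M)},M)$ under $\Pjoint$ is $q^*(x^{(m)},m)=\PMmxm\,p_{\theta^*}^{(m)}(x^{(m)})$. Under $\Pthetajoint$ it is $q_\theta(x^{(m)},m)=\PMmxm\,p_{\theta}^{(m)}(x^{(m)})$, which is again a probability density on observed pairs because the MAR factor integrates out. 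The mechanism factor cancels in the ratio, so $\KLMAR=\KL(q^*\|q_\theta)\ge0$. Assumption \ref{asm_no_empty_MDM} only ensures that $m=\mathbbm{1}$ contributes nothing. If one quantity is infinite, we treat it via the usual convention, splitting the integrand into positive and negative parts before applying the decomposition.

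The third step, strict positivity, is the main obstacle. Suppose $\KLMAR=0$. Then $q^*=q_\theta$ a.e., and in particular for $m=0$ we get $\PMzerox\,p_{\theta^*}(x)=\PMzerox\,p_\theta(x)$ for almost every $x$. By Assumption \ref{asm_positive_MDM} with $\delta\ge0$, we have $\PMzerox>0$ almost everywhere, so $p_\theta=p_{\theta^*}$ a.e. and $\KL=0$. Taking the contrapositive yields the claim, and combined with the first step this gives the stated equivalence. The delicate points are showing that $q_\theta$ is a genuine density, which requires integrating $\PMmxm$ against $p_\theta$ correctly, and handling measurability of the conditional densities; the rest is routine.
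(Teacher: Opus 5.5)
Your proposal is correct and follows essentially the same route as the paper. You use the same marginal/conditional split $\KL = \KLMAR + (\text{conditional term})$, with the conditional term shown nonnegative via MAR. You get nonnegativity of $\KLMAR$ from Gibbs' inequality for your observed-data densities $q^*$ and $q_\theta$; the paper writes out exactly this with $\log x \le x-1$, using MAR to get $\sum_m \int q_\theta = 1$. Strict positivity then comes from the $m=0$ slice together with $\Prob(M=0\mid X=x)>0$.

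Recognising $\KLMAR$ as a genuine KL divergence between observed-data laws, and using the equality case $\KL(q^*\|q_\theta)=0 \Rightarrow q^*=q_\theta$ instead of the paper's strict inequality on a set $A$, is slightly cleaner. It avoids having to justify passing a pointwise strict inequality through the integrals.
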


We note that Assumption \ref{asm_positive_MDM} need only be true for $\delta=0$. On the other hand, it can be shown that if Assumption \ref{asm_positive_MDM} does not hold for $\delta=0$, it is possible to construct a distribution $P_{\theta_1}$ such that $\KL(P_{\theta^*}\| P_{\theta_1}) > 0$, but $\KLMAR(P_{\theta^*}\| P_{\theta_1})=0$.

\vspace{0.2cm}
The above shows that $\KLMAR$ is nonnegative and zero for $\theta=\theta^*$. However, compared to the case of complete data, we lose information and the KL divergence is reduced. Following classical Bayesian theory as in \cite{Fundamentals}, we also define
\begin{align}
   \KLMARsq(P_{\theta^*}\| P_{\theta})= \mathbb{E}_{(X,M)\sim\mathbb{P}_{\theta^*}} \left[  \log \left(\frac{p_{\theta^*}^{(M)}\left(X^{(M)}\right)}{p_{\theta}^{(M)}\left(X^{(M)}\right)} \right)^2 \right] .
\end{align}
Unfortunately, contrary to $\KLMAR$, it is not straightforward to derive properties of $\KLMARsq$, beyond the fact that it is clearly nonnegative.



These definitions can then be used to define the $\KLMAR$ ball around $\theta^*$:


\[ \mathcal{B}(\theta^*,\varepsilon; \mathbb{P}_{\theta^*}) = \left\{ \theta \in \Theta : \KLMAR(P_{\theta^*}\| P_{\theta} )\leq \varepsilon^2 \, , \, \KLMARsq(P_{\theta^*}\| P_{\theta})\leq \varepsilon^2 \right\}, \]
for \(\varepsilon > 0\). This is a natural adaptation of the KL ball usually considered in the nonparametric Bayesian literature, see e.g., \cite[Chapter 8]{Fundamentals}. We now use these definitions to obtain general contraction results under MAR missingness.

\subsection{General Posterior Contraction Results}


We can now formulate our second group of assumptions:
\begin{asm}[Prior Mass and Sieve]
\label{asm_prior2plus}
There exist positive constants $\CI, \CII$, a sequence of measurable sets $(\Theta_n)_n \subset \Theta$, and sequences $\varepsilon_n >0$, $\bar{\varepsilon}_n > 0$, $\bar{\varepsilon}_n \leq \varepsilon_n$ with $n \bar{\varepsilon}_n \to \infty$ such that
\begin{enumerate}[label=\roman*), ref=\theasm~\roman*)]
    \item \label{v_priormassII} $\Pi\left(\mathcal{B}(\theta^*,\bar{\varepsilon}_n; \mathbb{P}_{\theta^*})\right) \geq e^{-\CI n \bar{\varepsilon}_n^2}$,
for small enough $\bar{\varepsilon}_n^2$.
  \item\label{iii_coveringnumber} $\log N(\varepsilon_n, \Theta_n, d) \leq \CII n \varepsilon_n^2$.
 \item \label{iv_priormassI} $\Pi(\Theta_n^c) \leq e^{-n \bar{\varepsilon}_n^2(\CI+4)}$.
\end{enumerate}
\end{asm}

Assumption \ref{asm_prior2plus} is closely related to the standard prior mass condition commonly used in the Bayesian nonparametrics literature. Part \ref{v_priormassII} requires that the prior assigns sufficient mass to a Kullback-Leibler neighborhood of the true parameter $\theta^*$. This is a natural requirement, since an insufficient prior mass in a neighborhood of $\theta^*$ would lead to a vanishing posterior mass. The only distinction here is that the neighborhood is defined using $\KLMAR$, whereas the classical formulation typically relies on the standard $\KL$ divergence.
Parts \ref{iii_coveringnumber} and \ref{iv_priormassI} correspond to the usual sieve conditions; see, for instance, Chapter 1 of \cite{castillo2025book}. These conditions assume the existence of a sequence of sieves $\Theta_n$ with increasing size and controlled complexity.

\vspace{0.2cm}
The following assumption introduces a testing condition ensuring the existence of suitable tests for identifying $\theta^*$.

\begin{asm}[Test]
\label{asm_test2}
There exist constants $\CIII > 0$ and $ a \in (0,1)$, such that for any $\varepsilon > 0$ and $\theta_1$ with $d(\theta^*, \theta_1) > \varepsilon$ there exists a sequence of tests $\phi_n=\phi_n\left(\data\right)$, such that
$$
\mathbb{E}_{\data\sim\mathbb{P}_{\theta^*}^n}\left[\phi_n(\data)\right] \leq  e^{-\CIII n \varepsilon^2} \quad , \quad \mathbb{E}_{\data\sim\mathbb{P}_{\theta}^n}\left[1-\phi_n(\data)\right] \leq e^{-\CIII n \varepsilon^2}, 
$$
for all $\theta$ such that $d(\theta, \theta_1) <  a \varepsilon$.
\end{asm}

This condition is analogous to the usual testing assumption in the Bayesian nonparametrics literature, with the notable difference that, in standard approaches, the test statistics are typically constructed using the complete dataset, whereas we only rely here on the incomplete observed data.
Under these assumptions, it is possible to establish a rate of convergence analogous to that of the complete-data setting: 

\begin{restatable}[General Rate Result]{thm}{Generalrates}\label{Thm:GeneralResult}
Assume that Assumptions \ref{asm_true_MDM}, \ref{asm_no_empty_MDM} and Assumptions \ref{asm_prior2plus}, \ref{asm_test2} hold on the semimetric space $(\Theta, d)$. Then, for $\CIV=\CIV(a, \CI, \CII, \CIII)$ large enough,
\begin{align}
\Pi\left(d(\theta , \theta^*) \geq \CIV \varepsilon_n \, \big| \, \data\right) \to 0 \quad \text{in } \mathbb{P}_{\theta^*} \text{-probability}.
\end{align}

\end{restatable}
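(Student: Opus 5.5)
The proof follows the classical prior mass and testing argument (Ghosal--Ghosh--van der Vaart, \cite[Theorem 8.9/8.11]{Fundamentals}). The one new ingredient is that the likelihood is the observed-data (ignorable) likelihood. We work with the likelihood ratios
\[
R_n(\theta)=\prod_{i=1}^n \frac{p_\theta^{(M_i)}(X_i^{(M_i)})}{p_{\theta^*}^{(M_i)}(X_i^{(M_i)})}.
\]
Since the normalising factor cancels, for any set $B$ we have
\[
\Pi(B\mid\data)=\int_B R_n\,d\Pi\Big/\int R_n\,d\Pi .
\]
The key observation is that, under Assumption \ref{asm_true_MDM}, the full data likelihood factorises as $p_\theta^{(m)}(x^{(m)})\,\Prob(M=m\mid X^{(m)}=x^{(m)})$. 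The MDM factor is common to all $\theta$, so $R_n(\theta)$ is exactly the likelihood ratio $d\Pthetajointn/d\Pjointn$ of the observed data $\data$. Two consequences follow:
\begin{itemize}
\item Fubini gives $\E_{\theta^*}[(1-\phi_n)R_n(\theta)]\le \E_{\theta}[1-\phi_n]$, where $\E_\theta$ is expectation under $\Pthetajointn$, the law of $\data$ when $X\sim P_\theta$. This is exactly the form in which Assumption \ref{asm_test2} supplies type II error bounds.
\item The expectations of $\log R_n$ under $\Pjointn$ are governed by $\KLMAR$ and $\KLMARsq$.
\end{itemize}

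\textbf{Step 1 (evidence lower bound).} We show that on an event $A_n$ with $\Pjointn(A_n)\to1$,
\[
\int R_n\,d\Pi \ge \Pi\big(\mathcal{B}(\theta^*,\bar\varepsilon_n;\Pjoint)\big)\,e^{-2n\bar\varepsilon_n^2}\ge e^{-(\CI+2)n\bar\varepsilon_n^2}.
\]
This is the analogue of \cite[Lemma 8.10]{Fundamentals}. Restrict to the ball $\mathcal{B}$ and normalise the prior there to a probability measure $\bar\Pi$. Jensen's inequality then gives
\[
\log\int R_n\,d\bar\Pi\ge \int \log R_n\,d\bar\Pi=:Z_n .
\]
The summands of $Z_n$ are i.i.d. under $\Pjoint$. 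Their mean is $-\int\KLMAR\,d\bar\Pi\ge -\bar\varepsilon_n^2$ per observation. Their second moment is at most $\int\KLMARsq\,d\bar\Pi\le\bar\varepsilon_n^2$, by Jensen/Cauchy--Schwarz. Chebyshev's inequality therefore bounds $\Pjointn\big(Z_n<-2n\bar\varepsilon_n^2\big)$ by $1/(n\bar\varepsilon_n^2)\to0$, using $n\bar\varepsilon_n^2\ge n\bar\varepsilon_n\cdot\bar\varepsilon_n$ and the assumption $n\bar\varepsilon_n\to\infty$. Strictly, $n\bar\varepsilon_n^2\to\infty$ is what is needed; I would note this, or use $n\bar\varepsilon_n^2\to\infty$ as implied by the intended reading. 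Proposition \ref{definition_KL} guarantees that $\KLMAR\ge0$ is a sensible quantity, but nonnegativity is not needed here.

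\textbf{Step 2 (global tests).} Cover $\Theta_n\cap\{d(\theta,\theta^*)\ge \CIV\varepsilon_n\}$ by shells $S_j=\{j\CIV\varepsilon_n\le d<(j+1)\CIV\varepsilon_n\}$. Cover each shell by $d$-balls of radius $a j\CIV\varepsilon_n$ whose centres $\theta_1$ satisfy $d(\theta_1,\theta^*)>j\CIV\varepsilon_n$. Since the radius is at least $\varepsilon_n$ once $a\CIV\ge1$, the number of balls is at most $N(\varepsilon_n,\Theta_n,d)\le e^{\CII n\varepsilon_n^2}$ by Assumption \ref{asm_prior2plus}. For each centre, Assumption \ref{asm_test2} with $\varepsilon=j\CIV\varepsilon_n$ gives a test; let $\phi_n$ be the maximum over all of them. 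Then
\[
\E_{\theta^*}\phi_n\le\sum_j e^{\CII n\varepsilon_n^2-\CIII n j^2\CIV^2\varepsilon_n^2}\to0,
\qquad
\sup_{\theta\in S_j}\E_\theta(1-\phi_n)\le e^{-\CIII nj^2\CIV^2\varepsilon_n^2},
\]
provided $\CIII\CIV^2>\CII$.

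\textbf{Step 3 (combine).} Split the posterior mass of $\{d\ge\CIV\varepsilon_n\}$ as
\[
\phi_n+(1-\phi_n)\mathbb 1_{A_n}\,\Pi(\cdot\mid\data)+\mathbb 1_{A_n^c}.
\]
\begin{itemize}
\item On $A_n$, the denominator is bounded below by Step 1.
\item For the numerator, split into $\Theta_n^c$ and $\Theta_n$. By Fubini and $\E_{\theta^*}R_n(\theta)\le1$, the $\Theta_n^c$ part has expectation at most $\Pi(\Theta_n^c)\le e^{-(\CI+4)n\bar\varepsilon_n^2}$. By the change of measure identity above, the $\Theta_n$ part has expectation at most $\sum_j e^{-\CIII nj^2\CIV^2\varepsilon_n^2}$.
\end{itemize}
Dividing by $e^{-(\CI+2)n\bar\varepsilon_n^2}$, both contributions vanish. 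The first gives $e^{-2n\bar\varepsilon_n^2}$. The second vanishes once $\CIII\CIV^2>\CI+2$, using $\bar\varepsilon_n\le\varepsilon_n$.

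The main obstacle is conceptual rather than computational. We must justify rigorously that $R_n$ is the density ratio of $\Pthetajointn$ to $\Pjointn$ for the observed data, and that Assumption \ref{asm_test2}'s $\E_{\theta}$ refers to that law. This requires integrating out $X^{(-M)}$ against $p_\theta$ and using MAR so that the MDM term depends only on $x^{(m)}$; Assumption \ref{asm_no_empty_MDM} removes the degenerate pattern. Once this identity is in place, the rest is the standard argument.
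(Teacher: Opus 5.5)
Your proposal is correct and follows essentially the same route as the paper: an evidence lower bound on the $\KLMAR$-ball via Jensen and Chebyshev (the paper's Lemma~\ref{Omegaanbound} with $C=1$, which does need $n\bar\varepsilon_n^2\to\infty$, as you note), shell-wise local tests from Assumption~\ref{asm_test2} combined with the entropy bound, and a Fubini/change-of-measure split of the numerator over $\Theta_n$ and $\Theta_n^c$. The one detail to tighten is that requiring the ball centres to satisfy $d(\theta_1,\theta^*)>j\CIV\varepsilon_n$ is not free. Choose the centres inside the shell, which bounds their number by a covering number at half the radius, and apply the test with $\varepsilon$ a fixed fraction of $j\CIV\varepsilon_n$; this only changes the constraints on $\CIV$ (the paper's own Step~1 stumbles at the same point with the false bound $(1-a)\CIV j\varepsilon_n>\CIV j\varepsilon_n$).
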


Assumptions \ref{asm_true_MDM}--\ref{asm_no_empty_MDM} are assumed throughout this paper. To verify Assumptions \ref{iii_coveringnumber}-\ref{iv_priormassI}, standard theory can be used for well-studied spaces $\Theta_n$, see, e.g., \cite{Fundamentals} for several examples. 
The new challenge lies in the validation of Assumption \ref{asm_test2} and Assumption \ref{v_priormassII}, both involving the missing data part.

\subsection{Posterior Contraction Results in Hellinger distance}

In the complete data case and when the metric $d=d_H$ on $\Theta$ is induced by the Hellinger distance, it is well-known that the test condition Assumption \ref{asm_test2} is always met for i.i.d.\ data, see, e.g.\ Theorem 1.2 in \cite{castillo2025book}. Surprisingly, Theorem \ref{thm_testingconditionholds} shows that this also holds under MAR and positivity of the propensity score: 


\begin{thm}\label{thm_testingconditionholds}
    Assume Assumptions \ref{asm_true_MDM} -- \ref{asm_positive_MDM} hold. Moreover, assume that there exists $\theta_{\star} \in \Theta$ such that $x \mapsto p_{\theta_{\star}}(x)$ is continuous and $\supp(p_{\theta}) \subset \supp(p_{\theta_{\star}})$ for all $\theta \in \Theta$. Then Assumption \ref{asm_test2} holds on $(\Theta, d_H)$ with $\CIII=\delta/8$ and $a=\sqrt{\delta}/\sqrt{24}$.

\end{thm}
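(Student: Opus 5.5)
The plan is to reduce the testing problem with missing data to a classical Hellinger testing problem for i.i.d. observations. The observations are now the pairs $(X^{(M)},M)$, which are i.i.d. The key fact is that, under MAR and positivity, the Hellinger distance between the \emph{observed-data} laws is sandwiched between $\sqrt{\delta}\,d_H$ and $d_H$. For each $\theta$, let $Q_\theta$ denote the law of $(X^{(M)},M)$ when $X\sim P_\theta$ and $M\mid X$ follows the true mechanism. By Assumption \ref{asm_true_MDM}, $Q_\theta$ has density $(x^{(m)},m)\mapsto p_\theta^{(m)}(x^{(m)})\,\PMmxm$ with respect to the product of Lebesgue and counting measures, and this density is the same for every $\theta$ apart from the factor $p_\theta^{(m)}$. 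Hence
\[
\He^2(Q_{\theta_1},Q_{\theta_2})=\sum_{m}\int \PMmxm\Bigl(\sqrt{p^{(m)}_{\theta_1}}-\sqrt{p^{(m)}_{\theta_2}}\Bigr)^2(x^{(m)})\,\mathrm{d}x^{(m)} .
\]
The sum can be taken over $m\neq\mathbbm{1}$ by Assumption \ref{asm_no_empty_MDM}.

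\textbf{Step 1 (lower bound).} Keep only the term $m=0$. There $p^{(0)}_\theta=p_\theta$, and $\PMzerox>\delta$ by Assumption \ref{asm_positive_MDM}. This gives $\He^2(Q_{\theta_1},Q_{\theta_2})\ge \delta\, d_H^2(\theta_1,\theta_2)$.

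\textbf{Step 2 (upper bound).} For each fixed $m$ and $x^{(m)}$, apply Cauchy--Schwarz in the integral over the missing coordinates:
\[
\sqrt{p^{(m)}_{\theta_1}p^{(m)}_{\theta_2}}(x^{(m)})\ \ge\ \int \sqrt{p_{\theta_1}p_{\theta_2}}(x)\,\mathrm{d}x^{(-m)} .
\]
Expanding the square then gives
\[
\int \PMmxm\Bigl(\sqrt{p^{(m)}_{\theta_1}}-\sqrt{p^{(m)}_{\theta_2}}\Bigr)^2\mathrm{d}x^{(m)} \le \int \PMmx\bigl(\sqrt{p_{\theta_1}}-\sqrt{p_{\theta_2}}\bigr)^2(x)\,\mathrm{d}x .
\]
This step uses MAR once more, to write $\PMmxm=\PMmx$ inside the $x^{(-m)}$-integral. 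Summing over $m$ and using $\sum_m \PMmx=1$ yields $\He(Q_{\theta_1},Q_{\theta_2})\le d_H(\theta_1,\theta_2)$.

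\textbf{Step 3 (classical test).} Invoke the standard i.i.d. Hellinger testing result (Le Cam--Birgé; e.g. \cite[Appendix B]{Fundamentals}, or Theorem 1.2 in \cite{castillo2025book}), applied to the family $\{Q_\theta\}$ with $\eta$ in the role of the separation radius. It states: if $\He(Q_{\theta^*},Q_{\theta_1})>\eta$, there is a test $\phi_n$ of the observed sample $\data$ with
\[
Q_{\theta^*}^n\phi_n\le e^{-n\eta^2/8}, \qquad \sup\bigl\{Q_\theta^n(1-\phi_n):\ \He(Q_\theta,Q_{\theta_1})<\eta/c\bigr\}\le e^{-n\eta^2/8}
\]
for a universal constant $c$. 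Now let $d_H(\theta^*,\theta_1)>\varepsilon$ and set $\eta=\sqrt{\delta}\,\varepsilon$. Step 1 gives $\He(Q_{\theta^*},Q_{\theta_1})>\eta$. Every $\theta$ with $d_H(\theta,\theta_1)<a\varepsilon=\sqrt{\delta}\,\varepsilon/\sqrt{24}$ satisfies $\He(Q_\theta,Q_{\theta_1})<\eta/\sqrt{24}$ by Step 2. Since $\Pthetajointn$ restricted to $\data$ is exactly $Q_\theta^n$, both errors are at most $e^{-n\delta\varepsilon^2/8}$. This gives $\CIII=\delta/8$ and $a=\sqrt{\delta}/\sqrt{24}$, provided the cited result allows $c\le\sqrt{24}$. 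I will track the constant of the version I use, for instance by building the test from the likelihood ratio against the midpoint $\tfrac12(Q_{\theta_1}+Q_{\theta^*})$ and using convexity of Hellinger balls. The test may depend on the unknown mechanism, which is harmless because Assumption \ref{asm_test2} only asks for existence.

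\textbf{Main obstacle.} I expect the delicate part to be measure-theoretic rather than the inequalities. The densities of $Q_\theta$ must be well defined, with a version of $\PMmxm$ that is consistent across all $\theta$. Moreover, the "almost every $x$" statements in Assumptions \ref{asm_true_MDM} and \ref{asm_positive_MDM} hold with respect to $P_{\theta^*}$, yet they must be applied under other $p_\theta$ in Steps 1 and 2. This is where I would use the hypothesis on $\theta_\star$. Because $\supp(p_\theta)\subset\supp(p_{\theta_\star})$ for all $\theta$ and $p_{\theta_\star}$ is continuous, the exceptional null sets can be chosen uniformly in $\theta$. Then MAR and positivity hold $p_\theta$-a.e. for every $\theta$, which is what the computations in Steps 1 and 2 require.
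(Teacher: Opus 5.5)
Your proof is correct, but it takes a more direct route than the paper. The paper never works with the observed-data laws themselves. Instead, it lifts each marginal to a joint density on $\mathcal{X}\times\{0,1\}^d$,
$q_{\theta}(x,m)=p_{\theta}^{(m)}(x^{(m)})\,p_{\theta_{\star}}(x^{(-m)}\mid x^{(m)})\,\PMmx$,
filling in the missing coordinates with the conditional of $p_{\theta_\star}$. This is the only place in the testing argument where continuity of $p_{\theta_\star}$ and the support condition are used. The paper then proceeds in three steps:
\begin{itemize}
\item It applies Lemma D.2 of \cite{Fundamentals} to the pair $q_{\theta^*},q_{\theta_1}$.
\item It checks, via MAR, that $q_\theta$ is a density and that $\He(q_{\theta_1},q_{\theta_2})=\HeMAR(p_{\theta_1},p_{\theta_2})$. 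It also checks that the common factor cancels in $\bar q_{\theta_1}/\bar q_{\theta^*}$, so the test statistic reduces to $\prod_i\bar p^{(M_i)}_{\theta_1}/\bar p^{(M_i)}_{\theta^*}$, a function of the observed data only.
\item It uses the sandwich $\delta\He^2\le\HeMAR^2\le\He^2$ of Proposition \ref{Hellingerbounding}, which is exactly your Steps 1--2.
\end{itemize}

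Your observation that the laws $Q_\theta$ of $(X^{(M)},M)$ already satisfy $\He(Q_{\theta_1},Q_{\theta_2})=\HeMAR(p_{\theta_1},p_{\theta_2})$ makes the lifting unnecessary. Under MAR, $q_\theta$ is just $Q_\theta$ followed by a $\theta$-free kernel. The Le Cam--Birg\'e construction needs only a common dominating measure, and the disjoint union of coordinate subspaces provides one. So your route is shorter and does not need the hypothesis on $\theta_\star$ at all. It even shows slightly more. Step 2 is just the data-processing inequality for the common kernel $x\mapsto(x^{(M)},M)$, and Step 1 uses only the $m=0$ term and positivity. Hence MAR enters your argument only in identifying $\He(Q_{\theta_1},Q_{\theta_2})$ with $\HeMAR$, not in the testing bound itself.

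Your constant proviso is met. The bounds $1-\tfrac16\He^2(p_0,p_1)$ and $1-\tfrac16\He^2(p_0,p_1)+\He^2(p,p_1)$ from Lemma D.2 give $c=\sqrt{24}$ and exponent $\eta^2/8$, since $\tfrac16-\tfrac1{24}=\tfrac18$.

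The one thing to fix is your ``main obstacle'' paragraph, whose reasoning does not work. The condition $\supp(p_\theta)\subset\supp(p_{\theta_\star})$ links each $p_\theta$ to $p_{\theta_\star}$, not to $p_{\theta^*}$. Moreover, inclusion of supports does not give absolute continuity. So this condition cannot turn $P_{\theta^*}$-null exceptional sets into $P_\theta$-null ones.

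What is actually needed is that MAR and positivity hold for Lebesgue-almost every $x$, for the fixed version of $\PMmx$ through which $\Pthetajoint$ is defined. The paper uses this tacitly. Since every $P_\theta$ has a Lebesgue density, both assumptions then hold $P_\theta$-a.e. for all $\theta$ with no further hypothesis. You should replace that paragraph with this remark.
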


The proof is constructive, providing an explicit test satisfying the conditions of Theorem \ref{thm_testingconditionholds}. To construct this test, we first define an adapted version of the Hellinger distance $\He$ based on what we observe, similar to the $\KLMAR$ distance above. We refer to Appendix \ref{sec_proofs} for details. We also note that, similar to the case of the KL divergence in \Cref{informationloss_KL}, Assumption \ref{asm_positive_MDM} is crucial for testing. Indeed, if there exists a set $A$ with $P_{\theta^*}(A) > 0$ such that $\PMzerox=0$ for $x \in A$, then it is possible to construct a density $p_{\theta_1}$ that agrees with $p_{\theta^*}$ on $A^c$ and on all $k < d$ dimensional marginals, but for which $p_{\theta_1}(x)\neq p_{\theta^*}(x)$, for $x \in A$. Thus no test based on the observed data would be able to differentiate $P_{\theta^*}, P_{\theta_1}$. Assumption \ref{asm_positive_MDM} with $\delta > 0$ makes it possible to relate the observable difference between $P_{\theta^*}$, $P_{\theta}$ more precisely to the Hellinger distance between $p_{\theta^*}, p_{\theta}$, making it possible to construct tests of uniform power.

\vspace{0.2cm}
As a direct consequence of \Cref{Thm:GeneralResult}, and \Cref{thm_testingconditionholds}, we have:

\begin{prop}\label{Cor:GeneralResultH}
Assume that there exists $\theta_{\star} \in \Theta$ such that $x \mapsto p_{\theta_{\star}}(x)$ is continuous and $\supp(p_{\theta}) \subset \supp(p_{\theta_{\star}})$ for all $\theta \in \Theta$. Moreover, assume that Assumptions \ref{asm_true_MDM}--\ref{asm_positive_MDM} and Assumption \ref{asm_prior2plus} hold on $(\Theta, d_H)$.
Then, for $\CIV=\CIV(\CI, \CII)$ large enough: 
\begin{align*}
\Pi\left(\He(p_{\theta} , p_{\theta^*
}) \geq \CIV \frac{\varepsilon_n}{\sqrt{\delta}} \, \big| \, \data\right) \to 0 \quad \text{in } \mathbb{P}_{\theta^*} \text{-probability}.
\end{align*}
\end{prop}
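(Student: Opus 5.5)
The plan is to combine \Cref{Thm:GeneralResult} with \Cref{thm_testingconditionholds} directly. The only subtlety is that the test of \Cref{thm_testingconditionholds} has exponent $\CIII=\delta/8$ and radius factor $a=\sqrt{\delta}/\sqrt{24}$, both depending on $\delta$. We want a constant $\CIV$ depending only on $(\CI,\CII)$, with the $\delta$-dependence pushed into the rate $\varepsilon_n/\sqrt{\delta}$.

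I would therefore rescale the metric. Define $\tilde d(\theta_1,\theta_2):=\sqrt{\delta}\, d_H(\theta_1,\theta_2)$, which is again a metric inducing the same Borel $\sigma$-algebra.

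First, check Assumption \ref{asm_test2} for $\tilde d$ with $\delta$-free constants. Take $\varepsilon>0$ and $\theta_1$ with $\tilde d(\theta^*,\theta_1)>\varepsilon$, i.e.\ $d_H(\theta^*,\theta_1)>\varepsilon/\sqrt{\delta}$. \Cref{thm_testingconditionholds}, applied at level $\varepsilon'=\varepsilon/\sqrt{\delta}$, gives tests with both errors at most $e^{-(\delta/8) n \varepsilon'^2}=e^{-n\varepsilon^2/8}$. This holds uniformly over $\theta$ with $d_H(\theta,\theta_1)<\sqrt{\delta/24}\,\varepsilon/\sqrt{\delta}=\varepsilon/\sqrt{24}$, i.e.\ $\tilde d(\theta,\theta_1)<\sqrt{\delta}\,\varepsilon/\sqrt{24}$.

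The radius still carries $\delta$. If the proof of \Cref{thm_testingconditionholds} actually delivers a radius $\varepsilon'/\sqrt{24}$ in $d_H$ (the $\sqrt{\delta}$ in $a$ coming from the same rescaling), then $a=1/\sqrt{24}$ for $\tilde d$. Otherwise I would note that $\delta\le 1$ and absorb it through the covering step, as follows.

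Second, verify Assumption \ref{asm_prior2plus} for $\tilde d$. Parts i) and iii) do not involve the metric. For part ii), since $\tilde d\le d_H$ when $\delta\le1$, a $d_H$-cover at scale $\varepsilon_n$ is a $\tilde d$-cover at scale $\varepsilon_n$, so $\log N(\varepsilon_n,\Theta_n,\tilde d)\le \CII n\varepsilon_n^2$.

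Finally, apply \Cref{Thm:GeneralResult} on $(\Theta,\tilde d)$ with $\CIII=1/8$ and the $\delta$-free $a$. This gives $\Pi(\sqrt{\delta}\,d_H(\theta,\theta^*)\ge \CIV\varepsilon_n\mid\data)\to0$, which is exactly the claim after dividing by $\sqrt\delta$.

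The main obstacle is the tracking of constants. In the standard proof of the general theorem, the ratio $a$ enters through covering $\{\theta\in\Theta_n: j\varepsilon_n<d\le (j+1)\varepsilon_n\}$ by $d$-balls of radius $a j\varepsilon_n$. With a $\delta$-dependent $a$, this would introduce an extra factor $\log(1/\delta)$ into the entropy.

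If $a$ cannot be freed from $\delta$, I would redo the peeling argument by hand. I would bound the covering number at scale $a j\varepsilon_n$ by that at scale $\varepsilon_n$ and check that it is still dominated by the test exponent $n j^2\varepsilon_n^2/8$ once $\CIV$ is large. This works because the test exponent grows quadratically in $j$, so the covering number at scale $\varepsilon_n$ suffices.
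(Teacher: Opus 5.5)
Your rescaling $\tilde d=\sqrt{\delta}\,d_H$ is harmless. However, the argument as written does not produce a $\delta$-free $\CIV$, and that is the only real content of the statement.

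The hypothetical in your third paragraph is false. In the proof of \Cref{cor_basictest} the type-II bound is $1-\frac{\delta}{6}\He^2(p_{\theta^*},p_{\theta_1})+\HeMAR^2(p_{\theta_1},p_\theta)$, and the last term is controlled only by $\He^2(p_{\theta_1},p_\theta)$, with no factor $\delta$. So the test radius is genuinely $\sqrt{\delta/24}\,\varepsilon'$ in $d_H$, and in $\tilde d$ you still have $a=\sqrt{\delta/24}$. Your final step therefore invokes \Cref{Thm:GeneralResult} with a ``$\delta$-free $a$'' that does not exist.

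Used honestly, the peeling step of that theorem covers the shells $A_j$ by $\tilde d$-balls of radius $a\CIV j\varepsilon_n$ and compares with the entropy at scale $\varepsilon_n$. This forces $\CIV\ge 1/a\asymp\delta^{-1/2}$, so you only get contraction at rate $\varepsilon_n/\delta$ in $d_H$.

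Your fallback does not repair this. For all $j<1/(a\CIV)$ the scale $a\CIV j\varepsilon_n$ lies below $\varepsilon_n$, so its covering number is not controlled by the one at scale $\varepsilon_n$. The quadratic growth of the test exponent in $j$ is irrelevant for those $j$. Nor is there a general ``$\log(1/\delta)$'' relation between covering numbers at different scales on a nonparametric sieve. In fact your Step 2 is exactly where the loss occurs: verifying the $\tilde d$-entropy through $\tilde d\le d_H$ throws away the factor $\sqrt\delta$ needed to compensate $a$.

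The missing observation is the exact scaling $N(r,A,\tilde d)=N(r/\sqrt{\delta},A,d_H)$. The $\tilde d$-radius $a\CIV j\varepsilon_n$ is the $d_H$-radius $\CIV j\varepsilon_n/\sqrt{24}$. Hence $N(a\CIV j\varepsilon_n,A_j,\tilde d)\le N(\varepsilon_n,\Theta_n,d_H)\le e^{\CII n\varepsilon_n^2}$ as soon as $\CIV\ge\sqrt{24}$, independently of $\delta$.

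Equivalently, and this is all the paper's ``direct consequence'' amounts to, you can skip the rescaling entirely. Apply \Cref{Thm:GeneralResult} on $(\Theta,d_H)$ with $\CIII=\delta/8$ and $a=\sqrt{\delta/24}$. Then read off from its proof that the requirement $\CIV>\max(1/a,\sqrt{\CII/\CIII},\sqrt{(\CI+4)/\CIII})$ becomes $\CIV>\delta^{-1/2}\max(\sqrt{24},\sqrt{8\CII},\sqrt{8(\CI+4)})$. Every constraint scales exactly like $\delta^{-1/2}$, and that factor is precisely what moves into the rate $\varepsilon_n/\sqrt{\delta}$.
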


This result can be interpreted as a natural extension of classical posterior concentration theorems to the setting with missing data. The parameter $\delta$, arising from the positivity assumption on the propensity score (Assumption~\ref{asm_positive_MDM}), quantifies the level of missingness in the data. In the complete-data setting, one has $\delta = 1$ (and $\KLMAR=\KL$), and the result reduces to the standard posterior contraction theorem in Bayesian nonparametrics (see, e.g., Theorem~1.3 in \cite{castillo2025book}). The proposition shows that posterior concentration in Hellinger distance still holds in the presence of missing data, with a convergence rate of order $\varepsilon_n / \sqrt{\delta}$. Importantly, the sequence $\varepsilon_n$ is of the same order as in the complete-data case, up to the fact that the prior mass condition is formulated in terms of the $\KLMAR$ divergence instead of the usual Kullback-Leibler divergence, which needs to be verified on a case-by-case basis. The additional factor $1/\sqrt{\delta}$ reflects the loss of information due to missingness, and is therefore unavoidable: as $\delta$ decreases, the effective sample size deteriorates, leading to slower convergence.

\vspace{0.2cm}
Finally, we note that if the prior mass condition Assumption \ref{v_priormassII} is met in the case of complete data, then, since $\KLMAR \leq \KL$,
\begin{align*}
   \{\theta \in \Theta:\KL(P_{\theta^*} \| P_{\theta}) \leq \varepsilon^2\}\subset \left \{\theta \in \Theta:  \KLMAR(P_{\theta^*} \| P_{\theta})\leq \varepsilon^2\right\}  .
\end{align*}
While this is likely enough to justify a Schwartz-type result similar to \cite[Theorem 6.23]{Fundamentals}, proving consistency without any rate, it is not sufficient for the stronger result we wish to prove here. The reason is that the same relation between complete and missing data quantities does not hold for $\KLMARsq$, and it is thus unclear whether meeting the prior mass assumption in the complete data case also implies that Assumption \ref{v_priormassII} is met under MAR missingness.
We now apply these general results in the case of density estimation and regression.

\section{Examples and Applications}\label{sec_examples}

In this section, we illustrate the consequences of our general posterior contraction theorem under MAR missingness through several classical Bayesian models. We first consider density estimation, which provides the main application of our theory. In this setting, we show that a Dirichlet mixture of normal prior achieves the same minimax contraction exponent as in the complete-data case, despite the fact that only incomplete observations are available. We then turn to regression models and distinguish two situations: missing responses and missing covariates. While missing responses lead to an almost immediate extension of complete-data results, missing covariates require a more careful analysis of the local geometry of the likelihood.

\vspace{0.2cm}

Throughout this section, $\mathcal X=\mathbb R^d$ and we assume that $d\geq2$, since the interest of general MAR mechanisms arises mainly in the multivariate setting. In particular, when $d=1$, missingness patterns are substantially simpler and do not allow for genuinely multivariate MAR structures.


\subsection{Density Estimation on $\R^d$}\label{sec_density}

The first application concerns the estimation of an unknown density from incomplete observations. This example is particularly relevant for our framework since the target of interest is the complete-data distribution, while the observed likelihood only involves partially observed realizations. We show that, despite this loss of information, the posterior contracts at the same minimax exponent as in the absence of missing values.

\vspace{0.2cm}

Let $\phi(\cdot, \Sigma)$ be the density of the $\Gauss{0}{\Sigma}-$ distribution where $\Sigma$ is a positive definite $d \times d$ matrix. Let moreover $I_d$ be the identity matrix in $d$ dimensions. We consider
\[
p_{\theta}(x)=\int \phi(x-z, \Sigma) d F(z).
\]
The prior distribution $\Pi$ on $\Theta$ is chosen as an inverse Wishart distribution for $\Sigma$ and, independently, a Dirichlet Process with parameter $\alpha$ and base measure $\Gauss{\mu}{I_d}$ for $F$, see, e.g., \cite[Chapter 4/5]{Fundamentals}, or \Cref{alg:dpgmm_shared} in Appendix \ref{App_Implementation} for details. We assume that the true distribution $p_{\theta^*}$ lies in a Hölder class:
\begin{asm}[Hölder Class]\label{asm_Hölder}
Let for $k=(k_1, \ldots, k_d) \in \mathbb{N}^d$, let  $|k|=\sum_{j=1}^{d} k_i$, and define the derivative operator
\[
D^k=\frac{\delta^{|k|} }{\delta x_1^{k_1} \cdots \delta x_d^{k_d}}.
\]
Then $p_{\theta^*}$ has mixed partial derivatives $D^k p_{\theta^*}$ of order up to $|k| \leq \underline{\beta} := \lceil \beta - 1 \rceil$, satisfying for a function $L: \mathbb{R}^d \to [0, \infty)$,
\begin{equation}
|D^k p_{\theta^*}(x_1 + x_2) - D^k p_{\theta^*}(x_1)| \leq L(x_1) e^{a\|x_1\|^2} \|x_2\|^{\beta - k}, \quad k = \underline{\beta}, \; x_1, x_2 \in \mathbb{R}^d,
\tag{9.9}
\end{equation}
\begin{equation}
\E_{X\sim P_{\theta^*}} \left[ \left(\frac{L(X)}{p_{\theta^*}(X)}\right)^2 + \left(\frac{|D^k p_{\theta^*}(X)|}{p_{\theta^*}(X)}\right)^{2\beta/k} \right] < \infty, \quad k \leq \underline{\beta}.
\tag{9.10}
\end{equation}
Furthermore, $p_{\theta^*}(x) \leq c e^{-b\|x\|^{\tau}}$, for every $\|x\| > a$, for positive constants $a, b, c, \tau$.
\end{asm}

The following result shows that MAR missingness does not modify the minimax contraction exponent of the posterior distribution achievable in the complete-data setting under the condition above:

\begin{restatable}[Density Estimation Rate]{thm}{Densityrates}\label{Thm:DensityResult}
    Assume that $\Pi$ is Dirichlet mixture of normal prior described above, that $d \geq 2$ and that Assumptions \ref{asm_true_MDM} -- \ref{asm_positive_MDM} and Assumption \ref{asm_Hölder}, with parameters $\beta, \tau$, hold. Then for $\CIV$ large enough:
\begin{align}
\Pi\left(\He(p_{\theta} , p_{\theta^*
}) \geq \frac{\CIV}{\sqrt{\delta}} n^{-\beta/(2\beta + d)} (\log(n))^t \, \big| \, \data\right) \to 0 \quad \text{in } \mathbb{P}_{\theta^*} \text{-probability},
\end{align}
with 
\[
t > \frac{\beta d + \beta d/\tau + d + \beta}{2 \beta + d}.
\]
\end{restatable}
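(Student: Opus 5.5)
We will deduce the theorem from \Cref{Cor:GeneralResultH} with $\varepsilon_n = n^{-\beta/(2\beta+d)}(\log n)^t$ and $\bar\varepsilon_n = n^{-\beta/(2\beta+d)}(\log n)^{t_0}$, where $t_0 < t$ are the exponents from the complete-data result for Dirichlet mixtures of normals (Theorem 9.9 / Section 9.4 in \cite{Fundamentals}, due to Shen, Tokdar and Ghosal). The strategy is to reuse every complete-data ingredient that does not involve missingness, so that the only new work is the $\KLMAR$ prior mass condition.

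\textbf{Hypotheses of \Cref{Cor:GeneralResultH}.} The continuity and support condition is immediate: every $p_\theta$ is a normal location mixture, hence continuous and strictly positive on $\R^d$. So we may take any $\theta_\star$, and $\supp(p_\theta)=\R^d$ for all $\theta$. The sieve conditions \ref{iii_coveringnumber} and \ref{iv_priormassI} involve only the prior and the Hellinger geometry of the complete-data model. We therefore take the sieves $\Theta_n$ from the complete-data proof: mixtures with at most $H_n \asymp n\bar\varepsilon_n^2/\log n$ atoms in a ball of radius $a_n$, and eigenvalues of $\Sigma$ in a polynomially growing range. The entropy and remaining-mass bounds from \cite{Fundamentals} then apply verbatim. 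If needed, we adjust the constant in the exponent of \ref{iv_priormassI} by enlarging $H_n$, at the cost of the log power.

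\textbf{Main obstacle: prior mass in the $\KLMAR$ ball.} By \Cref{informationloss_KL}, $\KLMAR\le\KL$, which handles the first moment. The second moment $\KLMARsq$ is the difficulty, since the log ratio of marginals can have either sign. The plan follows the complete-data argument, which finds a discrete mixture $p_{\theta_0}$ with $H\asymp \bar\varepsilon_n^{-d/\beta}(\log)^{c}$ atoms and bounds the KL and $V$ over a neighborhood of it via Hellinger distance and a lower bound on $p_\theta/p_{\theta^*}$. We carry out the same steps marginally, pattern by pattern.

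\begin{itemize}
\item[(i)] For each $m$, $\He(p^{(m)}_{\theta},p^{(m)}_{\theta^*})\le \He(p_\theta,p_{\theta^*})$ by data processing.
\item[(ii)] The marginals of a normal mixture with covariance $\Sigma$ are normal mixtures with covariance $\Sigma_{mm}$. The tail bound in Assumption \ref{asm_Hölder} passes to marginals. Consequently, the pointwise lower bound $p_\theta^{(m)}(x^{(m)}) \gtrsim \bar\varepsilon_n^{c}\,\exp(-c\|x^{(m)}\|^2/\sigma^2)$ holds exactly as in the complete case on the relevant neighborhood.
\item[(iii)] We then apply the standard lemma (Lemma B.2 in \cite{Fundamentals}) that bounds $\KL$ and $V$ by $\He^2\log^2(1/\He)$ plus tail terms, given a lower bound on the likelihood ratio. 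This gives, for each $m$, $\E_{\theta^*}[\log^2(p^{(m)}_{\theta^*}/p^{(m)}_\theta)(X^{(m)})]\lesssim \bar\varepsilon_n^2$ up to logs.
\item[(iv)] We combine over patterns. Since $\KLMARsq=\sum_m \E[\PMmxm \log^2(\cdots)]\le\sum_m \E_{\theta^*}[\log^2(p^{(m)}_{\theta^*}/p^{(m)}_\theta)]$ and there are at most $2^d$ patterns, $\KLMARsq\lesssim 2^d\bar\varepsilon_n^2(\log n)^{c'}$.
\end{itemize}

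The log factor and the constant $2^d$ are absorbed by raising $t_0$. This yields $\Pi(\mathcal B(\theta^*,\bar\varepsilon_n;\Pjoint))\ge e^{-C_1 n\bar\varepsilon_n^2}$, inheriting the complete-data prior probability of the neighborhood of $(F_0,\Sigma_0)$.

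\textbf{Conclusion.} \Cref{Cor:GeneralResultH} then gives contraction at $\CIV\varepsilon_n/\sqrt\delta$. The stated lower bound on $t$ is the complete-data exponent $(\beta d+\beta d/\tau+d+\beta)/(2\beta+d)$, which the log inflations in step (iii) do not change beyond the strict inequality.
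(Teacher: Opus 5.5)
Your argument is correct. It shares the paper's skeleton: reduce to \Cref{Cor:GeneralResultH}, and reuse the complete-data sieve and prior-mass set of \cite[Section 9.4]{Fundamentals} unchanged, so that only the $\KLMAR$-ball condition is new. You handle that condition by a genuinely different route.

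\emph{The paper's route.} It proves a MAR-adapted version of Lemma B.2 of \cite{Fundamentals} (\Cref{Lem_Priorboundlemmas}), using the auxiliary joint densities $q_\theta(x,m)=p^{(m)}_\theta(x^{(m)})\,p_{\theta_\star}(x^{(-m)}\mid x^{(m)})\,\Prob(M=m\mid x)$. This bounds $\KLMAR$ and $\KLMARsq$ by $\HeMAR^2\le\He^2$ plus a single tail term. The tail term is then controlled by rewriting $p^{(m)}_{\theta^*}/p^{(m)}_{F,\Sigma}$ as the full-dimensional ratio $p_{m,\theta^*,F,\Sigma}(x)/p_{F,\Sigma}(x)$. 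This lets the complete-data lower bounds on $p_{F,\Sigma}(x)$ be reused verbatim, at the cost of a uniform bound on the conditional mixture density (\Cref{lem_Gaussianbound}).

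\emph{Your route.} You discard the propensity weights via $\Prob(M=m\mid x)\le 1$, a step that does not even need MAR. This reduces $\KLMARsq$ to at most $2^d$ ordinary marginal problems, to which the standard Lemma B.2 applies directly:
\begin{itemize}
\item data processing gives the marginal Hellinger bound;
\item each marginal is again a normal location mixture with covariance $\Sigma_{mm}$, whose eigenvalues lie in $[\lambda_1(\Sigma),\lambda_d(\Sigma)]$ by interlacing, so the cube-based lower bounds transfer;
\item the tail condition passes to marginals with modified constants.
\end{itemize}
For the first moment you simply invoke $\KLMAR\le\KL$ from \Cref{informationloss_KL}, instead of a second tail computation.

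\emph{Comparison.} Your route is more elementary and modular. It works for any model closed under marginalization whose marginals satisfy complete-data KL bounds, and it costs only the constant $2^d$. The paper's route yields a model-agnostic MAR analogue of Lemma B.2 that keeps the weights through $\HeMAR$. It requires lower bounds only on the full-dimensional model densities, at the price of the conditional-density bound.

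One bookkeeping remark: do not ``absorb logs by raising $t_0$''. If a genuinely new log power appeared, that would move the threshold on $t$. Instead, observe that for pattern $m$ your Lemma B.2 cutoff $\epsilon$ is a fixed power of the construction's parameters $\sigma$ and $\varepsilon\asymp\min(\sigma^{d/2},\sigma^{\beta})$. Hence $\log(1/\epsilon)\asymp\log(1/\sigma)$, and each marginal bound is of order $(\sigma^{2\beta}+\varepsilon^2)\log^2(1/\sigma)$, exactly as in the complete-data case. The threshold on $t$ is therefore unchanged.
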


The rate achieved is the same as that presented in \cite[Theorem 9.9]{Fundamentals}, at the price of the factor $1/\sqrt{\delta}$. In particular, the minimax rate is attained up to log-factors, see, e.g., \cite[Chapter 9.4]{Fundamentals}. The result illustrates the main message of this paper: under MAR missingness and a mild positivity assumption, the unknown distribution of the complete data remains consistently estimable at the optimal nonparametric rate.

\subsection{Regression with Missing Responses} 

We now consider Bayesian regression, where the covariates are always observed while the response variable is subject to MAR missingness. We assume that the distribution of the covariates is fixed and known and only model the conditional distribution of the response given the covariates. 

\begin{asm}[Missing Response]\label{asm_Missing_response}
Let $(X_i,Y_i)$, $i=1,\ldots,n$, be i.i.d. observations with $X_i\in\mathbb{R}^d$ and $Y_i\in\mathbb R$. We assume that the covariates are fully observed, while $Y_i$ may be missing according to a MAR mechanism satisfying Assumption \ref{asm_true_MDM}. We further assume that the marginal distribution of the covariates $X$ is fixed and known while only the conditional distribution of the response depends on the parameter $\theta$. We finally assume that all distributions in the model admit a continuous positive density with respect to some Lebesgue's measure.
\end{asm}

Contrary to the density estimation setting, posterior contraction results established for complete-data regression models can be transferred to the MAR setting with essentially no additional work. We note that this setting of missing response with fully observed covariates is the one most often considered in the literature. 

\begin{restatable}[Regression with Missing Response]{thm}{MissingResponseRates}\label{Thm:MissingResponseRate}
    Suppose that Assumptions \ref{asm_true_MDM} -- \ref{asm_positive_MDM} and Assumption \ref{asm_Missing_response} hold. Assume moreover that the prior satisfies the classical prior mass condition for complete-data, that is that there exist positive constants $\CI, \CII$, a sequence of measurable sets $(\Theta_n)_n \subset \Theta$, and sequences $\varepsilon_n >0$, $\bar{\varepsilon}_n > 0$, $\bar{\varepsilon}_n \leq \varepsilon_n$ with $n \bar{\varepsilon}_n \to \infty$ such that
\begin{enumerate}[label=\roman*), ref=\theasm~\roman*)]
    \item $\Pi\left(\mathcal{B}(\theta^*,\bar{\varepsilon}_n)\right) \geq e^{-\CI n \bar{\varepsilon}_n^2}$,
for small enough $\bar{\varepsilon}_n^2$, 
 \item\label{iii_coveringnumber_complete} $\log N(\varepsilon_n, \Theta_n, d_H) \leq \CII n \varepsilon_n^2$,
 \item  $\Pi(\Theta_n^c) \leq e^{-n \bar{\varepsilon}_n^2(\CI+4)}$, 
\end{enumerate}
where
\[
\mathcal{B}(\theta^*,\varepsilon) = \left\{ \theta \in \Theta : \KL(P_{\theta^*} \| P_{\theta} )\leq \varepsilon^2 \, , \, \mathbb{E}_{P_{\theta^*}} \left[ \log \left( \frac{\mathrm{d}P_{\theta^*}}{\mathrm{d}P_{\theta}} \right)^2 \right]\leq \varepsilon^2 \right\} \, .
\]
Then,
\[
\Pi\!\left( \He(P_\theta,P_{\theta^*}) \ge C_4\frac{\varepsilon_n}{\sqrt{\delta}} \,\middle|\,\mathcal{S}_n \right) \longrightarrow 0 
\]
in $P_{\theta^*}$-probability. 
\end{restatable}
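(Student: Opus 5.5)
The plan is to reduce the statement to \Cref{Cor:GeneralResultH}. Conditions ii) and iii) of the theorem are exactly Assumptions \ref{iii_coveringnumber} and \ref{iv_priormassI} on $(\Theta,d_H)$, because neither the prior nor the sieve is affected by missingness. The support condition of \Cref{Cor:GeneralResultH} is immediate from Assumption \ref{asm_Missing_response}. All model densities are continuous and positive, so any fixed $\theta_\star$ gives a continuous $p_{\theta_\star}$ with full support containing every $\supp(p_\theta)$. The only real work is therefore to show that the complete-data KL ball is contained in the MAR ball:
\[
\mathcal{B}(\theta^*,\varepsilon)\subset \mathcal{B}(\theta^*,\varepsilon;\Pjoint),
\]
so that condition i) implies Assumption \ref{v_priormassII}.

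The key structural fact is the shape of the likelihood ratio. Since $X$ is always observed and the law of $X$ is known and common to all $\theta$, write $p_\theta(x,y)=q(x)f_\theta(y\mid x)$. There are only two relevant patterns: $Y$ observed ($M=0$) and $Y$ missing. Pattern $M=\mathbbm{1}$ has probability zero by Assumption \ref{asm_no_empty_MDM}, and patterns with missing covariates do not occur. When $Y$ is observed, $p^{(0)}_{\theta^*}/p^{(0)}_\theta=f_{\theta^*}(Y\mid X)/f_\theta(Y\mid X)$. When $Y$ is missing, the observed marginal is $q(X)$ for every $\theta$, so the ratio equals $1$ and its logarithm vanishes. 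Writing $\ell_\theta(x,y)=\log\big(f_{\theta^*}(y\mid x)/f_\theta(y\mid x)\big)$, this gives
\[
\KLMAR(P_{\theta^*}\|P_\theta)=\E_{\theta^*}\!\left[\mathbbm{1}\{M=0\}\,\ell_\theta(X,Y)\right],\qquad \KLMARsq(P_{\theta^*}\|P_\theta)=\E_{\theta^*}\!\left[\mathbbm{1}\{M=0\}\,\ell_\theta(X,Y)^2\right].
\]

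For the second moment the bound is immediate: $\mathbbm{1}\{M=0\}\ell_\theta^2\le \ell_\theta^2$, so $\KLMARsq\le \E_{P_{\theta^*}}[\log(dP_{\theta^*}/dP_\theta)^2]\le\varepsilon^2$. The first moment is the step needing care, since $\ell_\theta$ can be negative and a pointwise bound fails. Here I use MAR. In this setting Assumption \ref{asm_true_MDM} says $\Prob(M=0\mid X,Y)=\pi(X)$ depends only on $X$. Conditioning on $X$ then gives
\[
\KLMAR=\E_{q}\big[\pi(X)\,\KL\big(f_{\theta^*}(\cdot\mid X)\,\|\,f_\theta(\cdot\mid X)\big)\big]\le \E_q\big[\KL\big(f_{\theta^*}(\cdot\mid X)\,\|\,f_\theta(\cdot\mid X)\big)\big]=\KL(P_{\theta^*}\|P_\theta)\le\varepsilon^2,
\]
using that each conditional KL is nonnegative and $\pi\le1$. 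This is consistent with \Cref{informationloss_KL}, which could be invoked directly for this inequality.

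Hence $\Pi(\mathcal{B}(\theta^*,\bar\varepsilon_n;\Pjoint))\ge\Pi(\mathcal{B}(\theta^*,\bar\varepsilon_n))\ge e^{-\CI n\bar\varepsilon_n^2}$, so Assumption \ref{asm_prior2plus} holds on $(\Theta,d_H)$. Assumption \ref{asm_positive_MDM} is assumed, and \Cref{Cor:GeneralResultH} yields the claimed contraction at rate $\CIV\varepsilon_n/\sqrt{\delta}$.

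The main obstacle I expect is measure-theoretic bookkeeping rather than substance. One must justify that $\ell_\theta$ is integrable enough for the conditioning step; finiteness of the complete-data second moment gives $\ell_\theta\in L^2$, hence $L^1$. One must also check that the Hellinger metric used in \Cref{Cor:GeneralResultH}, computed on joint densities $q f_\theta$, is the $\He(P_\theta,P_{\theta^*})$ appearing in the statement.
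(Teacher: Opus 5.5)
Your proposal is correct and follows essentially the same route as the paper. Both reduce to \Cref{Cor:GeneralResultH} by showing $\mathcal{B}(\theta^*,\varepsilon)\subset\mathcal{B}(\theta^*,\varepsilon;\Pjoint)$, and both bound $\KLMARsq$ by noting that the log-ratio vanishes when $Y$ is missing and then dropping the indicator $\mathbbm{1}\{M=0\}$. The differences are minor: the paper cites \Cref{informationloss_KL} for $\KLMAR\le\KL$ where you give the equivalent (and here more transparent) conditioning-on-$X$ argument, and you explicitly check the continuity and support hypothesis of \Cref{Cor:GeneralResultH}, which the paper leaves implicit.
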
 

This result shows that MAR missing responses do not introduce any additional difficulty beyond the verification of the standard complete-data prior conditions. Indeed, the observed-data likelihood ratio coincides with the complete-data likelihood ratio on observed responses and does not contribute to additional information loss when the response is missing.

\vspace{0.2cm}

Therefore, any posterior contraction theorem available for Bayesian regression under complete observations immediately extends to the MAR response-missing setting through our general result.

\subsection{Gaussian Regression with Missing Covariates}

We finally consider the more challenging regression setting where some components of the covariates are missing while the response remains fully observed. 

\begin{asm}[Missing Covariates]\label{asm_Missing_covariates}
Let $(X_i,Y_i)$, $i=1,\ldots,n$, be i.i.d. observations such that for some known noise variance $\sigma^2>0$:
\[
Y_i=f_{\theta^\star}(X_i)+\varepsilon_i,\qquad
\varepsilon_i\stackrel{i.i.d.}{\sim}\mathcal{N}(0,\sigma^2) \, .
\]
We assume that the response is always observed, while some features of the covariates $X_i$ may be missing according to a MAR mechanism satisfying Assumption \ref{asm_true_MDM}. We further assume that the marginal distribution of the covariates $X$ is fixed and known while only the conditional distribution of the response is modeled through the equation $Y=f_\theta(X)+\varepsilon$, with $\varepsilon\sim\mathcal{N}(0,\sigma^2)$, $f_\theta(\cdot)$ continuous, and $\|f_\theta\|_\infty\leq B$ for some $B>0$ and any $\theta$. 
\end{asm}

Contrary to the previous case, marginalizing over the missing covariates modifies the likelihood itself, and the connection with the complete-data model is no longer immediate. Nevertheless, under the assumption that the distribution of the covariates is known, we show that complete-data prior concentration conditions can still be transferred to the MAR setting. The key point is that a local neighborhood in the supremum norm for the regression function induces a corresponding neighborhood for the observed-data distributions.

\begin{restatable}[Regression with Missing Covariates]{thm}{MissingCovariatesRates}\label{Thm:MissingCovariatesRate}
    Suppose that Assumptions \ref{asm_true_MDM} -- \ref{asm_positive_MDM} and Assumption \ref{asm_Missing_covariates} hold. Assume moreover that the prior satisfies the following variant of the prior mass condition for complete-data: there exist positive constants $c, \CI, \CII$, a sequence of measurable sets $(\Theta_n)_n \subset \Theta$, and sequences $\varepsilon_n >0$, $\bar{\varepsilon}_n > 0$, $\bar{\varepsilon}_n \leq \varepsilon_n$ with $n \bar{\varepsilon}_n \to \infty$ such that
\begin{enumerate}[label=\roman*), ref=\theasm~\roman*)]
    \item \label{v_priormassII_complete} $\Pi\left(\theta \in \Theta : \|f_\theta-f_{\theta^*} \|_\infty \leq c\bar{\varepsilon}_n^2\right) \geq e^{-\CI n \bar{\varepsilon}_n^2}$,
for small enough $\bar{\varepsilon}_n^2$,
  \item\label{iii_coveringnumber_complete} $\log N(\varepsilon_n, \Theta_n, d_H) \leq \CII n \varepsilon_n^2$,
 \item \label{iv_priormassI_complete} $\Pi(\Theta_n^c) \leq e^{-n \bar{\varepsilon}_n^2(\CI+4)}$ \, .
\end{enumerate}
Then,
\[
\Pi\!\left( \He(P_\theta,P_{\theta^*}) \ge C_4\frac{\varepsilon_n}{\sqrt{\delta}} \,\middle|\,\mathcal{S}_n \right) \longrightarrow 0 
\]
in $P_{\theta^*}$-probability. 
\end{restatable}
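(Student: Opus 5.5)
The plan is to apply Proposition \ref{Cor:GeneralResultH}. The sieve conditions ii) and iii) are stated directly on $(\Theta, d_H)$, and they coincide with Assumption \ref{iii_coveringnumber} and Assumption \ref{iv_priormassI}. So two things remain. First, the support/continuity condition: all complete-data densities $p_\theta(x,y)=p_X(x)\phi_\sigma(y-f_\theta(x))$ share the support $\supp(p_X)\times\R$. If $p_X$ is continuous (as in Assumption \ref{asm_Missing_response}, and we take the same convention here), any $\theta_\star$ works. Second, and this is the main task, we must show that condition i) implies Assumption \ref{v_priormassII}. Concretely, we show that there is a constant $c$ (depending on $B,\sigma$) such that $\|f_\theta-f_{\theta^*}\|_\infty\le c\bar\varepsilon_n^2$ implies $\KLMAR(P_{\theta^*}\|P_\theta)\le\bar\varepsilon_n^2$ and $\KLMARsq(P_{\theta^*}\|P_\theta)\le\bar\varepsilon_n^2$. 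Then $\{\|f_\theta-f_{\theta^*}\|_\infty\le c\bar\varepsilon_n^2\}\subset\mathcal{B}(\theta^*,\bar\varepsilon_n;\Pjoint)$, and the prior mass bound transfers.

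For the key step, fix a pattern $m$. The response is always observed, so the observed density is
\[
p_\theta^{(m)}(x^{(m)},y)=\int p_X(x)\,\phi_\sigma\bigl(y-f_\theta(x)\bigr)\,dx^{(-m)} .
\]
The covariate law is known and common to all $\theta$. We bound the log-ratio pointwise: for every $x$ and $y$,
\[
\Bigl|\log\frac{\phi_\sigma(y-f_{\theta^*}(x))}{\phi_\sigma(y-f_\theta(x))}\Bigr|
=\frac{|f_\theta(x)-f_{\theta^*}(x)|\,|2y-f_\theta(x)-f_{\theta^*}(x)|}{2\sigma^2}
\le \frac{\eta(|y|+B)}{\sigma^2},
\]
where $\eta=\|f_\theta-f_{\theta^*}\|_\infty$. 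The numerator and denominator of $p_{\theta^*}^{(m)}/p_\theta^{(m)}$ are integrals of the same positive weight $p_X(x)\phi_\sigma(y-f_\theta(x))$, up to a factor lying in $[e^{-u},e^{u}]$ with $u=\eta(|y|+B)/\sigma^2$. Hence
\[
\bigl|\log\bigl(p_{\theta^*}^{(m)}/p_\theta^{(m)}\bigr)\bigr|\le u
\]
for every $m$ and every observed value. This is the step I expect to be the real obstacle, because marginalisation destroys the simple Gaussian form. The ratio-of-integrals sandwich avoids any need to compute the mixtures explicitly.

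It then follows that $\KLMARsq\le\sigma^{-4}\eta^2\E_{\theta^*}[(|Y|+B)^2]\le C\eta^2$, since $Y$ has bounded mean function and Gaussian noise. For $\KLMAR$, the crude bound $\KLMAR\le\sqrt{\KLMARsq}\le C'\eta$ is already enough, because $\eta\le c\bar\varepsilon_n^2$ gives $C'c\,\bar\varepsilon_n^2\le\bar\varepsilon_n^2$ once $c$ is small. This explains the $\bar\varepsilon_n^2$ scaling in condition i). Alternatively, Proposition \ref{informationloss_KL} gives $\KLMAR\le\KL=\E[(f_\theta-f_{\theta^*})^2(X)]/(2\sigma^2)\le\eta^2/(2\sigma^2)$.

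Similarly, $\KLMARsq\le C\eta^2\le Cc^2\bar\varepsilon_n^4\le\bar\varepsilon_n^2$ for $\bar\varepsilon_n$ small. With the inclusion established, Assumption \ref{asm_prior2plus} holds on $(\Theta,d_H)$, and Proposition \ref{Cor:GeneralResultH} yields the claimed contraction at rate $\CIV\varepsilon_n/\sqrt\delta$.
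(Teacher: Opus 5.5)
Your proposal is correct and is essentially the paper's proof. The Gaussian log-ratio bound $\eta(|y|+B)/\sigma^2$ is uniform in $x$, so it survives integration over $x^{(-m)}$; this places the sup-norm ball inside $\mathcal{B}(\theta^*,\bar{\varepsilon}_n;\Pjoint)$, and Proposition~\ref{Cor:GeneralResultH} finishes. Your explicit two-sided sandwich and your quadratic bounds $\KLMARsq\lesssim\eta^2$ and $\KLMAR\le\KL\le\eta^2/(2\sigma^2)$ are slightly sharper than the paper's linear bounds, and you should lead with them: they give the inclusion for whatever $c$ the hypothesis supplies once $\bar{\varepsilon}_n$ is small, whereas your Jensen route, like the paper's bounds, tacitly needs $c$ small relative to $B$ and $\sigma$.
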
 

We note that the Hellinger distance between $P_\theta$ and $P_{\theta^*}$ is related to the discrepancy between the functions $f_\theta$ and $f_{\theta^*}$ themselves through the formula
$$
\He^2(p_\theta,p_{\theta^*}) = \E_{X}\left[1-\exp\left(\frac{(f_\theta(X)-f_{\theta*}(X))^2}{8\sigma^2}\right)\right] \, .
$$

The proof of Theorem \ref{Thm:MissingCovariatesRate} is deferred to Appendix \ref{sec_proofs}. The main ingredient is to show that a prior concentration condition formulated in the supremum norm for the complete-data regression function implies the corresponding local prior mass condition required by our general MAR contraction theorem.

\vspace{0.2cm}


Supremum-norm concentration conditions are standard in Bayesian nonparametric regression, in particular for Gaussian process priors and sparse Bayesian neural networks; see e.g. \cite{castillo2025deep,castillo2025book,cherief2020convergence,giordano2023besov,ohn2024adaptive,polson2018posterior}. As such, we can apply Theorem \ref{Thm:MissingCovariatesRate} directly in these cases. One subtle difference is that most existing results for complete data formulate these assumptions in terms of squared supremum norms, whereas our argument requires concentration directly in the supremum norm. Hence, the prior condition appearing in Theorem \ref{Thm:MissingCovariatesRate} is slightly stronger than the usual complete-data assumptions.

\section{On Estimation and Imputation}\label{Sec_Motivating}\label{sec_empirical}


For missing data, density estimation plays a crucial role. With the procedure in \Cref{alg:dpgmm_shared}, we naturally arrived at a method that is able to make use of MAR missing data and produce new samples from the (estimated) complete-data distribution. This sample can then be used to estimate a wide variety of estimators of interest in a second step. Imputation methods (\cite{mice-rf-method,OTimputation, ImputationScores, näf2024goodimputationmarmissingness, MIRI}), generative methods (\cite{misgan, missdiff}), and the kernel density estimation approach in \cite{nonparametricpmm}, all try implicitly or explicitly to obtain such a data sample with distribution $P_{\theta^*}$ from incomplete observations. 


In this section, our goal is to illustrate this principle and empirically demonstrate that the performance of the theoretical algorithm implemented in \Cref{alg:dpgmm_shared} with missing values is roughly the same as the same algorithm using \emph{the complete data}. This is not a fair comparison, of course, as the complete data algorithm has access to more data than \Cref{alg:dpgmm_shared}. However, our use of a relatively high sample size $(n \in \{500, 1000\})$ compared to the dimension of $d=3$, should reveal a similar performance of the two.

\vspace{0.2cm}
We consider a simple example, inspired by \cite{näf2024goodimputationmarmissingness, practical}, with varying joint distribution $P_{\theta^*}$ of $(X_1, X_2, X_3)$, and the following missingness mechanism: For $\{m_1,m_2, m_3\}=\{(0,0,0), (0,1,0), (1,0,0)\}$ and $1 > \delta > 0$ small,
        \begin{align}\label{eq_MARmissing0}
        &\Prob(M=m_1\mid X=x)=(G(x_1)+G(x_2))/3, \nonumber \\
        &\Prob(M=m_2\mid X=x)=(2-G(x_1))/3 \nonumber \\
        &\Prob(M=m_3\mid X=x)=(1-G(x_2))/3,
    \end{align}
    where we choose
    \begin{align}\label{eq_Gx}
            G(x)=\max(\Phi(x), 2\delta),
    \end{align}
    with $\Phi$ is the standard Gaussian cdf. This mechanism clearly meets Assumption \ref{asm_true_MDM}. Moreover, it meets the positivity assumption (Assumption \ref{asm_positive_MDM}), in that $\Prob(M=m_1\mid X=x)> \delta > 0$. Nonetheless, as outlined in \cite[Example 6]{näf2024goodimputationmarmissingness}, this is a rather complex non-monotone mechanism-- simply ignoring the missing data will lead to a bias and it is unclear how to estimate $\Prob(M=m_1\mid X=x)$ for IPW strategies. 
 We attempt to (1) estimate the 0.1-quantile of $X_1$, which was the original ``challenge'' formulated in \cite{näf2024goodimputationmarmissingness, practical} and (2) to create a sample that is close in distribution to the (unobserved) complete-data distribution. Since these are simulated examples, (2) can be measured by comparing the generated distribution with a new sample from the data generating process, using any distributional metric. In particular, we use the energy distance \citep{EnergyDistance}. We note that even if the goal is only (1), competitors for our method are difficult to find due to the challenging MAR mechanism. One exception is a subset of nonparametric multiple imputation by chained equation (mice, \cite{mice}) algorithms that, while not theoretically grounded, have shown superior performance in practice (see e.g. \cite{OneBenchmarktorulethemall}), among which we include ``mice$\_$rf'' (\cite{mice-rf-method}) as a representative choice. In fact, we modeled \Cref{alg:dpgmm_shared} in Appendix \ref{App_Implementation} closely to the theoretical analysis, and as such, we do not expect to beat this performance using our algorithm on real data. Further studies might uncover more competitive versions of \Cref{alg:dpgmm_shared}.

\vspace{0.2cm}
We start with $P_{\theta^*}=\Gauss{0}{\Sigma}$ with
\begin{align*}
    \Sigma=\begin{pmatrix}
1 & 0.7 & 0 \\
0.7 & 1 & 0 \\
0 & 0 & 1
\end{pmatrix}.
\end{align*}
Next, we also study a mixture of Gaussians $P_{\theta^*}=w_1\Gauss{\mu_1}{\Sigma}+ w_2 \Gauss{\mu_2}{\Sigma} + w_3 \Gauss{\mu_3}{\Sigma} $, with weights $w_1=0.3, w_2=0.4, w_3=0.3$, means $\mu_1=\begin{pmatrix} -3& 0& 0 \end{pmatrix}$, $\mu_2=\begin{pmatrix} 0 & 3& 0 \end{pmatrix}$, $\mu_3=\begin{pmatrix} -3& 0& 0 \end{pmatrix}$ and $\Sigma$ as above. Finally, we also study the original example formulated in \cite{näf2024goodimputationmarmissingness}, where $\Pjoint$ has uniform marginals, with a copula-induced dependence between $X_1$ / $X_2$ and $G(x)=x$. We note that this last example does not meet Assumption \ref{asm_Hölder}, nor does it meet \Cref{asm_positive_MDM}, though $\PMzerox > 0$ for all $x \in \mathcal{X}$ is still true.

Figures \ref{fig:Gaussexample} - \ref{fig:uniformexample} show the results. As we use a Dirchlet mixture of normal, the first two settings are ideal for our method, though we note that this is still different than simply using a parametric approach. As such, our algorithm performs very well, both in estimating the quantile (1) and in terms of the energy distance (2). In particular, its performance is comparable to that of the algorithm having access to complete data, despite the tricky MAR mechanism and the loss of information. As expected, mice$\_$rf is also highly competitive, although it is somewhat struggling in terms of quantile estimation. On the other hand, for the uniform example the Dirchlet mixture of normal naturally does not perform as well, as the method tries to approximate a non-smooth density with smooth Gaussian distributions. Nonetheless, the performance for $n=1'000$ is again quite comparable, for our proposed algorithm with missing values and the same algorithm having access to the complete data.

\begin{figure}
    \centering
    \begin{subfigure}{0.49\textwidth}
        \centering
        \includegraphics[width=0.7\textwidth]{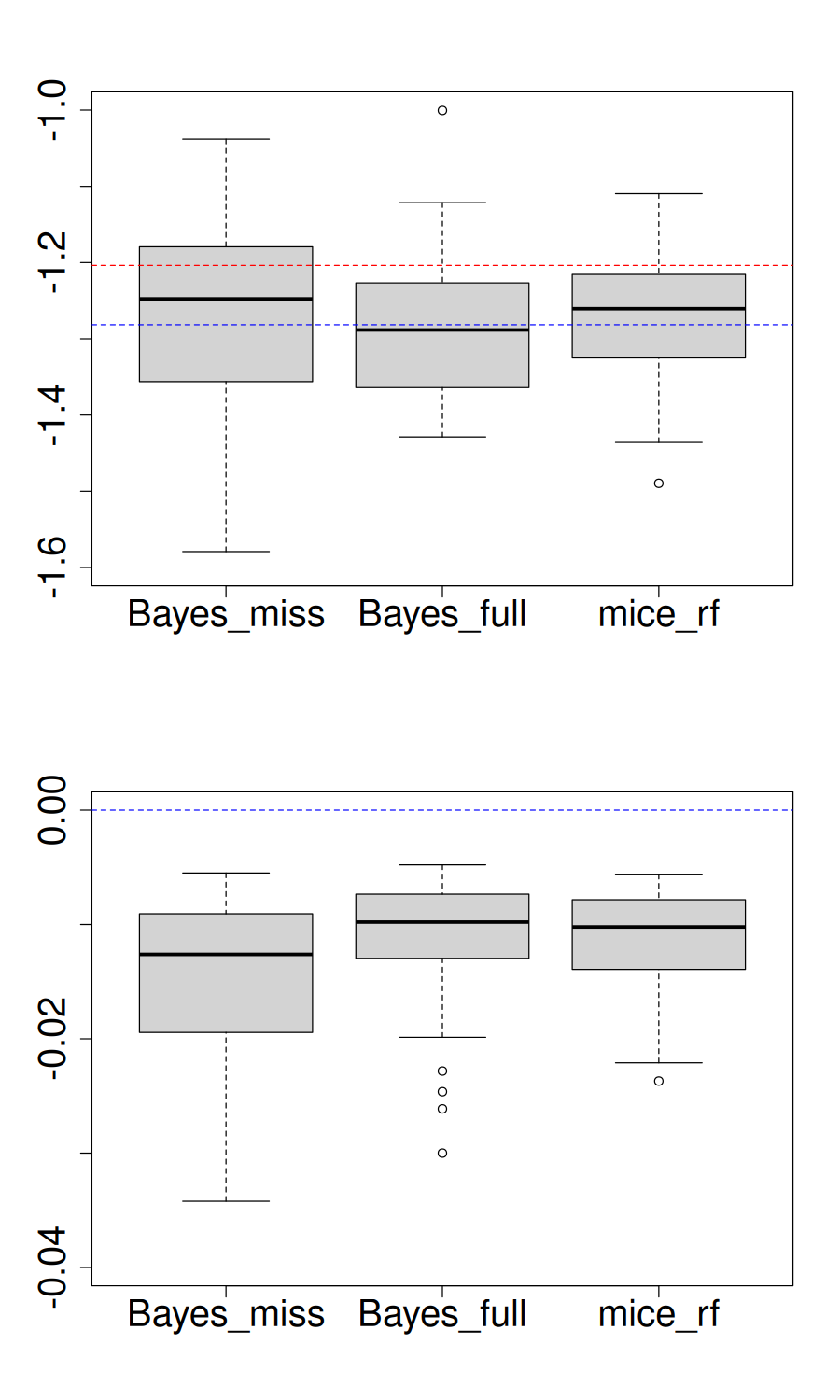}
    \end{subfigure}
    \hfill
    \begin{subfigure}{0.49\textwidth}
        \centering
        \includegraphics[width=0.7\textwidth]{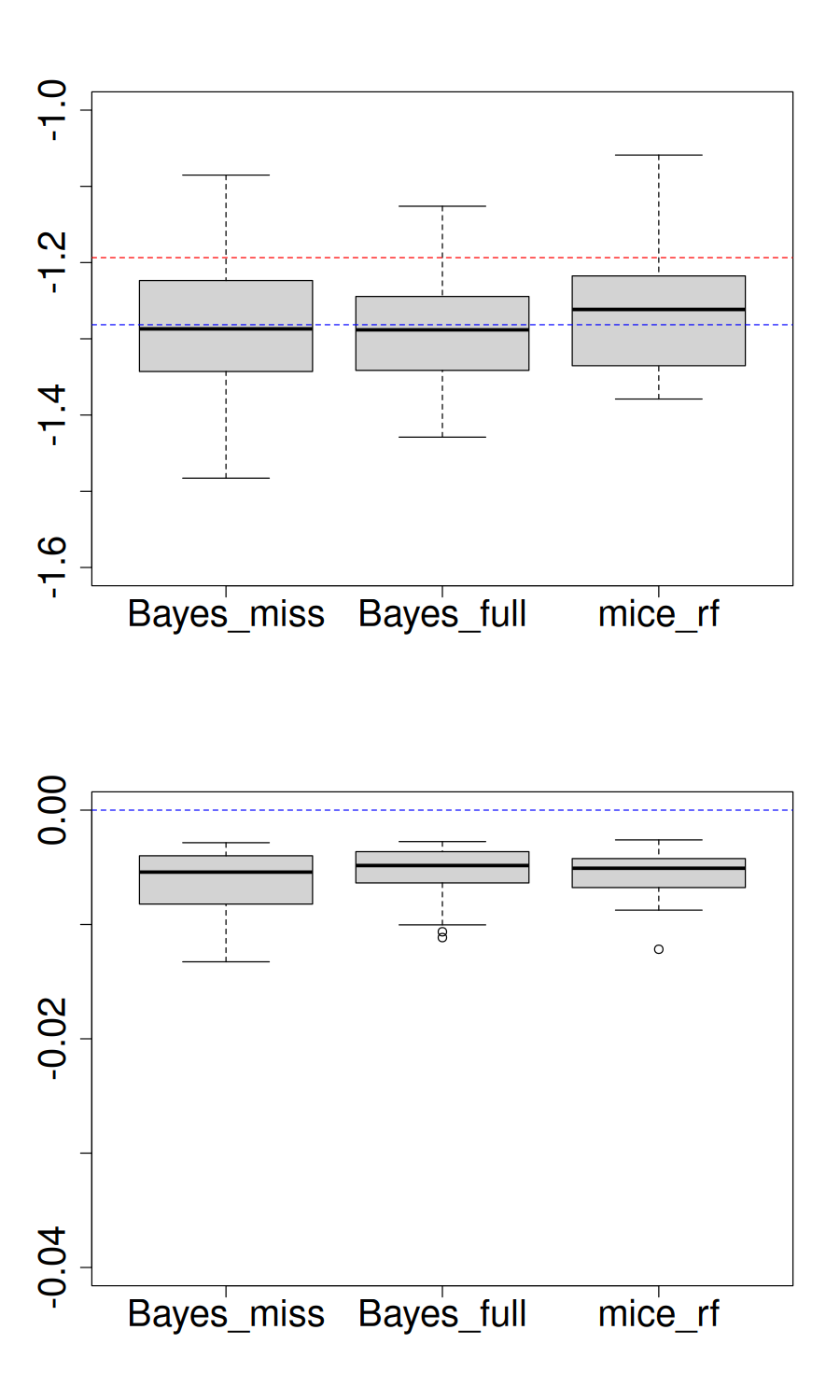}
    \end{subfigure}
    \caption{Example with $P_{\theta^*}$ chosen to be $\Gauss{0}{\Sigma}$. Top: Quantile estimate of $X_1$ for $n=500$ (left) and $n=1000$ (right), Bottom: negative energy distance between the newly generated sample/imputation and the complete data for $n=500$ (left) and $n=1000$ (right).}
    \label{fig:Gaussexample}
\end{figure}

\begin{figure}
    \centering
    \begin{subfigure}{0.49\textwidth}
        \centering
        \includegraphics[width=0.7\textwidth]{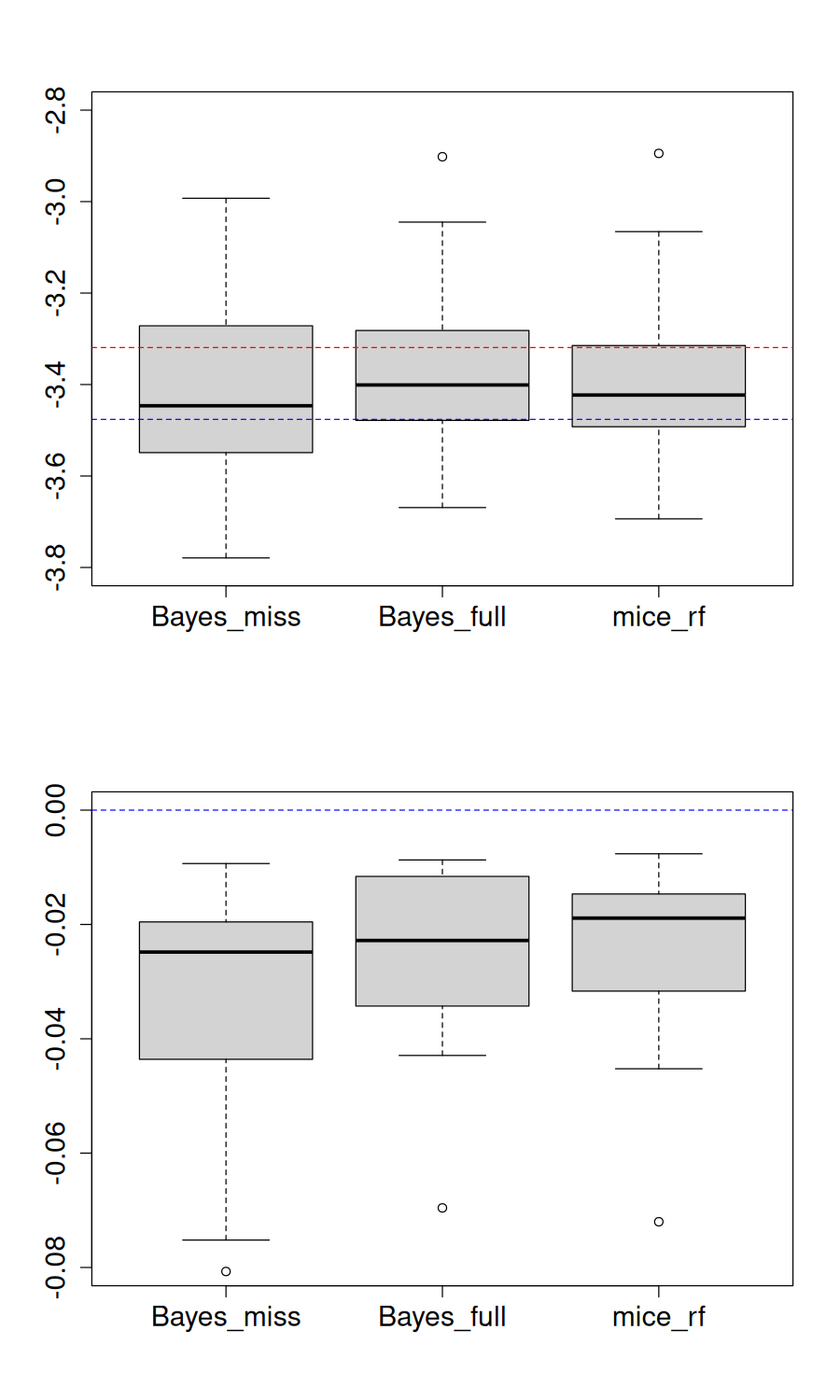}
    \end{subfigure}
    \hfill
    \begin{subfigure}{0.49\textwidth}
        \centering
        \includegraphics[width=0.7\textwidth]{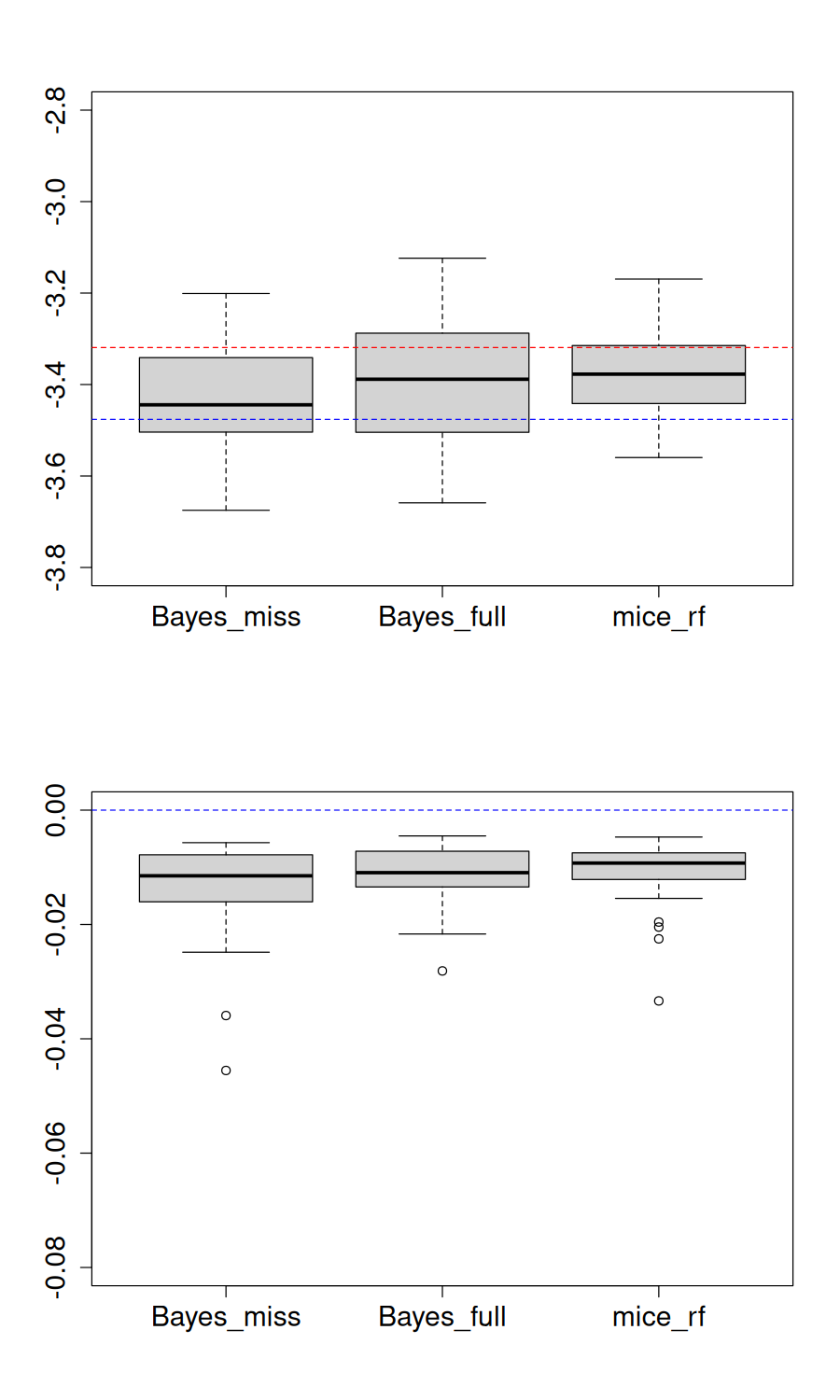}
    \end{subfigure}
    \caption{Example with $P_{\theta^*}$ chosen to be a mixture of Gaussians with different means and a correlation of $0.7$ between $X_1$ and $X_2$. Top: Quantile estimate of $X_1$ for $n=500$ (left) and $n=1000$ (right), Bottom: negative energy distance between the newly generated sample/imputation and the complete data for $n=500$ (left) and $n=1000$ (right).}
    \label{fig:GaussMixtureexample}
\end{figure}

\begin{figure}
    \centering
    \includegraphics[width=0.7\linewidth]{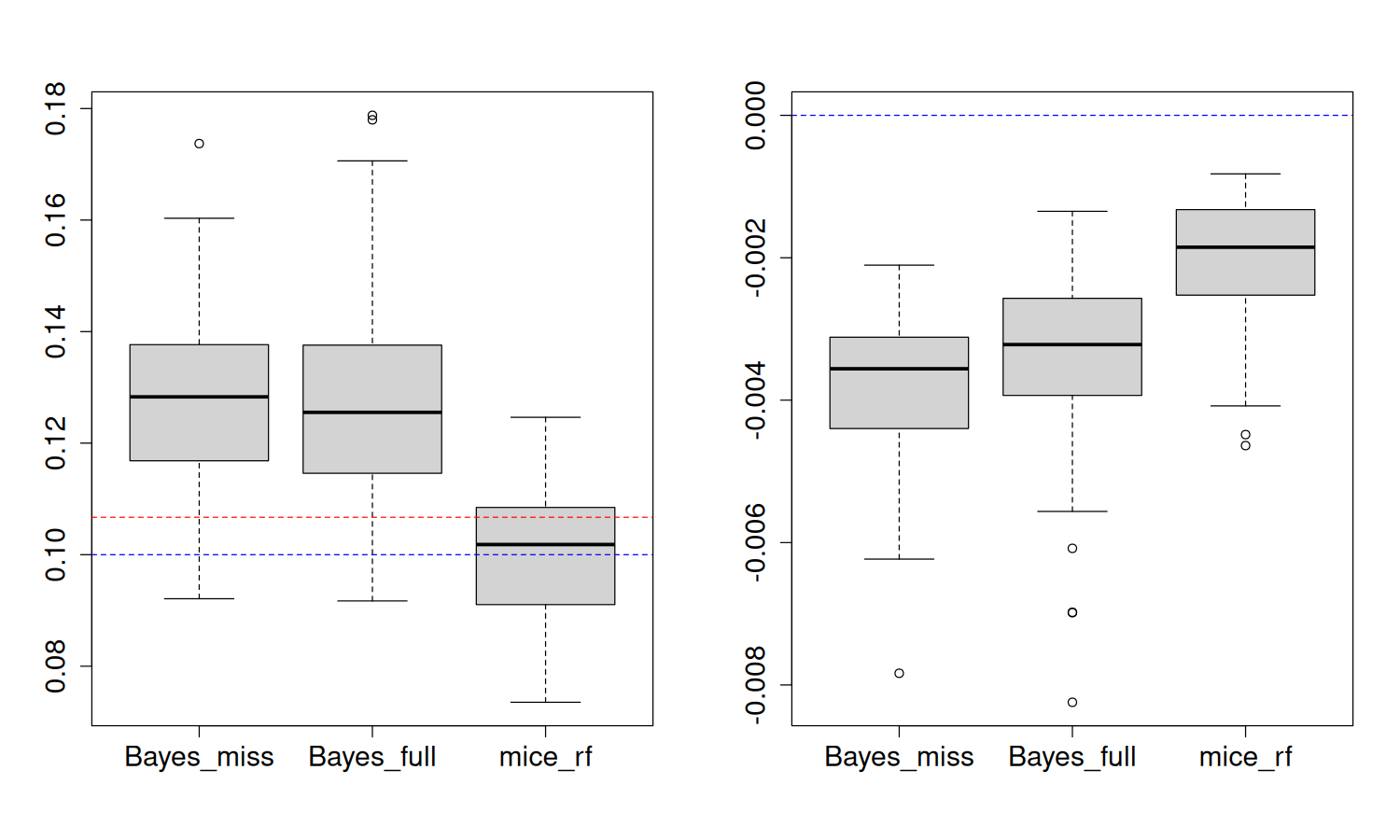}
    \caption{Example with $P_{\theta^*}$ chosen to be uniform on $[0,1]^3$ with correlation between $X_1$ and $X_2$ induced by a copula. Left: Quantile estimate of $X_1$, Right: negative energy distance between the newly generated sample/imputation and the complete data. We used $n=1000$.}
    \label{fig:uniformexample}
\end{figure}

\section{Conclusion}\label{sec_conclusion}

In this work, we adapted Bayesian posterior contraction results to the case of general non-monotone MAR missingness. We showed that the Hellinger distance still automatically meets the testing condition when a positivity assumption is added to MAR, providing a version of the result in which the testing condition could be removed. Applications of this theory to density estimation and regression lead to the seemingly first nonparametric consistency results under this general MAR condition. 

\vspace{0.2cm}
We believe that this work has only scratched the surface. In particular, Theorem \ref{Thm:GeneralResult} might be much more widely applicable. Moreover, a natural immediate question is whether a (semiparametric) Bernstein-von-Mises result can be derived. Such a result would allow to asymptotically approximate posterior distributions of parameters of interest by multivariate Gaussians and thus allow for principled uncertainty quantification under MAR missing values. We intend to study these questions in subsequent work. Finally, while the proposed density estimation algorithm works quite well in our simulations, it is somewhat slow. A simpler covariance structure for $\Sigma$, such as constraining it to be diagonal, is likely to substantially increase the speed of calculations.

\clearpage

\appendix

\section{Density Estimation with Missing Values}\label{App_Implementation}

Here we present the algorithm used to estimate the density, adapted from \cite{DPAlgorithm} and \cite[Chapter 5]{Fundamentals}.\footnote{A first template of the code and the algorithmic environments below, without missing values, was obtained using Claude AI.} We heavily rely on Metropolis Hastings algorithms, as described e.g. in \cite[Chapter 24.3]{Murphy}. The \texttt{R} implementation of the algorithm can be found on \url{https://github.com/JeffNaef/Bayesian-MAR}.

\begin{algorithm}
\caption{DP Gaussian Mixture with Shared Covariance}
\begin{algorithmic}[1]\label{alg:dpgmm_shared}
\Require Data $\data = \{X_1^{(M_1)}, \ldots, X_n^{(M_n)}\} \subset \mathbb{R}^d$
\Require Hyperparameters: $\alpha > 0$, $\mu_0 \in \mathbb{R}^d$, $\tau_0 > 0$, $\nu_0 > d-1$, $\Psi_0 \in \mathbb{R}^{d \times d}$
\Require Iterations: $T_{\text{total}}$, burn-in: $T_{\text{burn}}$
\Ensure Posterior samples

\State \textbf{Initialize:}
\State $z \gets$ sample(1:5, $n$), $K \gets \max(z)$
\State Sample $\mu_k \sim \mathcal{N}(\mu_0, \tau_0 I)$ for $k = 1, \ldots, K$
\State Sample $\Sigma \sim \mathcal{W}^{-1}(\Psi_0, \nu_0)$ \Comment{Single shared $\Sigma$}

\For{$t = 1$ to $T_{\text{total}}$}
    \State \Call{UpdateAssignmentsShared}{$\data, z, K, \{\mu_k\}_k, \Sigma, \alpha, \mu_0, \tau_0$} \Comment{Algorithm \ref{alg:update_assignments_shared}}
    \State Remove empty clusters; relabel; update $K$
    \State \Call{UpdateCenters}{$\data, z, K, \mu_0, \tau_0, \Sigma, \{\mu_k\}_k$} $\rightarrow \{\mu_k\}_{k=1}^K$ \Comment{Algorithm \ref{alg:update_centers}}
    \State \Call{UpdateSharedSigma}{$\data, z, K, \{\mu_k\}_k, \Psi_0, \nu_0, \Sigma$} $\rightarrow \Sigma$ \Comment{Algorithm \ref{alg:update_shared_sigma}}
    
    \If{$t > T_{\text{burn}}$}
        \State Save $z^{(t)}, K^{(t)}, \{\mu_k^{(t)}\}_{k=1}^{K^{(t)}}, \Sigma^{(t)}$
    \EndIf
\EndFor

\State \Return Posterior samples
\end{algorithmic}
\end{algorithm}


\begin{algorithm}
\caption{UpdateAssignmentsShared$(\data, z, K, \{\mu_k\}_k, \Sigma, \alpha, \mu_0, \tau_0)$}
\label{alg:update_assignments_shared}
\begin{algorithmic}[1]

\For{$i = 1$ to $n$}
    \State $z_i \gets \texttt{NA}$ \Comment{Remove observation $i$}
    \State $n_k \gets |\{j : z_j = k, j \neq i\}|$ for $k = 1, \ldots, K$
    
    \State \textbf{Compute log-probabilities:}
    \For{$k = 1$ to $K$}
        \If{$n_k > 0$}
            \State $\ell_k \gets \log n_k + \log \phi\left(X_i^{(M_i)}-\mu_k^{(M_i)}, \Sigma^{(M_i)}\right) $  \Comment{CRP + likelihood} 
        \Else
            \State $\ell_k \gets -\infty$
        \EndIf
    \EndFor
    
    \State \textbf{New cluster probability:}
    \State Sample $\mu_{\text{new}} \sim \mathcal{N}(\mu_0, \Sigma + \tau_0^2 I_d)$ 
    \State $\ell_{K+1} \gets \log \alpha + \log \phi\left( X_i^{(M_i)}-\mu_{\text{new}}^{(M_i)},  \Sigma^{(M_i)}\right) $ 
    
    \State $p \gets \text{softmax}(\ell_1, \ldots, \ell_{K+1})$
    \State Sample $z_i \sim \text{Categorical}(p)$
    
    \If{$z_i = K + 1$} \Comment{Create new cluster}
        \State $K \gets K + 1$
        \State $\mu_K \gets \mu_{\text{new}}$
    \EndIf
\EndFor
\end{algorithmic}
\end{algorithm}

\begin{algorithm}
\caption{UpdateCenters$(\data, z, K, \mu_0,\tau_0, \Sigma, \{\mu_k\}_k)$}
\label{alg:update_centers}
\begin{algorithmic}[1]
\Require Shared covariance $\Sigma$

\For{$k = 1$ to $K$}
    \State $\mathcal{I}_k \gets \{i : z_i = k\}$, $n_k \gets |\mathcal{I}_k|$
    
    \If{$n_k > 10$}
        
        \State \textbf{Sample from posterior:}
        \State $\mu_k \gets$ MHmu($\data$,$\mu_0$, $\tau_0$, $\Sigma$, $\mu_k$)
    \EndIf
\EndFor

\State \Return $\{\mu_k\}_{k=1}^K$
\end{algorithmic}
\end{algorithm}


\begin{algorithm}
\caption{Jointlik$(X,z, \{\mu_k\}_k,\Sigma)$}
\label{alg:Jointlik}
\begin{algorithmic}[1]

\State \textbf{Compute total scatter across ALL clusters:}
\State $K \gets \max(z)$ 
\For{$k = 1$ to $K$}
    \State $\mathcal{I}_k \gets \{i : z_i = k\}$, $n_k \gets |\mathcal{I}_k|$

        \State \textcolor{gray}{// Scatter around cluster mean}
        \State  $\ell_k \gets\sum_{i \in \mathcal{I}_k}\log \phi\left(X_i^{(M_i)}-\mu_k^{(M_i)}, \Sigma^{(M_i)}\right)$  
\EndFor

\State \Return $\sum_{k=1}^{K} \ell_k$
\end{algorithmic}
\end{algorithm}

\begin{algorithm}
\caption{UpdateSharedSigma$(\data, z, K, \{\mu_k\}_k, \Psi_0, \nu_0, \Sigma_{init})$}
\label{alg:update_shared_sigma}
\begin{algorithmic}[1]

\State \textbf{Initialize:} Set $\theta_{init} = \text{C}(\Sigma_{init})$, with C as in \eqref{C_func}
\State Evaluate $\ell^{(0)} = \log \mathcal{W}^{-1}(\Sigma_{init} \mid \Psi_0, \nu_0) + \text{Jointlik}(\data, z, \{\mu_k\}_k,\Sigma_{init}) + \log |J(\theta_{init})|$
\For{$t = 1, \ldots, T$}
    \State \textbf{Propose:} $\theta^* \sim \mathcal{N}(\theta^{(t-1)}, \sigma^2 I_{d(d+1)/2})$
    \State \textbf{Transform:} $\Sigma^* = C^{-1}(\theta^*)$
    \State \textbf{Evaluate target:}
    \Statex \hspace{2em} $\ell^* = \log \mathcal{W}^{-1}(\Sigma^* \mid \Psi_0, \nu_0) + \text{Jointlik}(\data, z, \{\mu_k\}_k,\Sigma^*) + \log |J(\theta^*)|$
    \State \textbf{Compute acceptance ratio:} $\alpha = \min(1, \exp(\ell^* - \ell^{(t-1)}))$
    \State \textbf{Accept/reject:} Draw $u \sim \text{Uniform}(0, 1)$
    \If{$u \leq \alpha$}
        \State $\theta^{(t)} \gets \theta^*$, $\Sigma^{(t)} \gets \Sigma^*$, $\ell^{(t)} \gets \ell^*$
    \Else
        \State $\theta^{(t)} \gets \theta^{(t-1)}$, $\Sigma^{(t)} \gets \Sigma^{(t-1)}$, $\ell^{(t)} \gets \ell^{(t-1)}$
    \EndIf
\EndFor
\State \Return $\Sigma^{(T)}$ 
\end{algorithmic}
\end{algorithm}

For the proposal of $\Sigma$ in Algorithm \ref{alg:update_shared_sigma}, we use the log-Cholesky parametrization, as in \cite{Pinheiro1996}: For a given candidate matrix $\Sigma$, we first use the Cholesky Decomposition $\Sigma = LL^\top$ where $L$ is the \emph{unique} lower Cholesky factor with positive diagonal elements \citep{Pinheiro1996}. Then we define the unconstrained vector $\theta \in \mathbb{R}^{d(d+1)/2}$ by:
\[
\theta_{ij} = \begin{cases}
\log L_{ii} & \text{if } i = j \\
L_{ij} & \text{if } i > j
\end{cases}
\]
Thus if we define $L=Chol(\Sigma)$ it holds that 
\begin{align}\label{C_func}
    C: \R^{d \times d} \to \R^{d(d+1)/2}, \ \  C(\Sigma)_{ij} = \begin{cases}
\log (Chol(\Sigma)_{ii}) & \text{if } i = j \\
Chol(\Sigma)_{ij} & \text{if } i > j
\end{cases}
\end{align}
It is then also straightforward to define the inverse transformation $C^{-1}: \R^{d(d+1)/2} \to \R^{d \times d} $. Finally, since we are now moving on $\theta$ instead of $\Sigma$, we also need to derive the Jacobian correction. It can be shown to be: 
\[
\log |det J(\theta)| = \sum_{i=1}^d (d - i + 2)\, \theta_{ii}
\]
where $\theta_{ii} = \log L_{ii}$ are the log-diagonal entries. The proposal distribution of the MH sampler is then simply 
\[
\theta^* \sim \mathcal{N}(\theta^{(t-1)}, \sigma^2 I_{d(d+1)/2}),
\]
where $I_{d}$ is the identity matrix of dimension $d$.

\vspace{1em}


\begin{algorithm}[H]
\caption{MHmu($\data$,$\mu_0$, $\tau_0$, $\Sigma$, $\mu_{init}$)}
\label{alg:mh_mu}
\begin{algorithmic}[1]

\State \textbf{Initialize:} $z \gets rep(1,n)$
\State Evaluate $\ell^{(0)} = \log \mathcal{N}(\mu_{init} \mid \mu_0, \tau_0^2 I_d) + \text{Jointlik}(\data, z, \mu_{init},\Sigma)$
\For{$t = 1, \ldots, T$}
    \State \textbf{Propose:} $\mu^* \sim \mathcal{N}(\mu^{(t-1)}, \sigma^2 I_d)$
    \State \textbf{Evaluate target:}
    \Statex \hspace{2em} $\ell^* = \log \phi\left(\mu^*- \mu_0, \tau_0^2 I_d \right) + \text{Jointlik}(\data, z, \mu^*,\Sigma)$ 
    \State \textbf{Compute acceptance ratio:} $\alpha = \min(1, \exp(\ell^* - \ell^{(t-1)}))$
    \State \textbf{Accept/reject:} Draw $u \sim \text{Uniform}(0, 1)$
    \If{$u \leq \alpha$}
        \State $\mu^{(t)} \gets \mu^*$, $\ell^{(t)} \gets \ell^*$
    \Else
        \State $\mu^{(t)} \gets \mu^{(t-1)}$, $\ell^{(t)} \gets \ell^{(t-1)}$
    \EndIf
\EndFor
\State \Return $\mu^{(T)}$
\end{algorithmic}
\end{algorithm}

\vspace{1em}


\section{Proofs} \label{sec_proofs}

Here we restate the results in the main text and give their proofs. In addition to the notation used in the main text, we define:
\begin{itemize}
    \item $\logm(y)= \min(-\log(y), 0)$, for all $ y > 0$. Moreover, we will sometimes write $\logm^2(y)$ to mean $(\logm(y))^2$ 
    \item For $x \in \R^d$, $\|x\|_{\infty}=\max_{j} |x_j| $ and $\|x \|$ the Euclidean norm of $x$ and similarly with $\|x^{(m)}\|_{\infty}$ and $\|x^{(m)}\|$.
    \item For $f: \R^k \to \R$ bounded, $\| f \|_{\infty}=\sup_{x} |f(x)|$.
    \item To avoid an explosion of constants, we sometimes follow \cite{Fundamentals} and write $a \lesssim b$ to mean that $a \leq C b$ for some constant $0 < C < \infty$. We note that in the proof of Theorem \ref{Thm:DensityResult}, $C$ might depend on the dimension $d$. We write $a \asymp b$, if $a \lesssim b$ and $b \lesssim a$.
    \item We denote marginal distributions with a superscript, such as $p_{\theta^*}^{(m)}(x^{m})$, but we will write conditional densities without superscript, e.g, $p_{\theta^*}(x^{-(m)}\mid x^{(m)})$.
    \item We will abbreviate $\Prob(M=m\mid X=x)$ with $\Prob(M=m\mid x)$ and similarly for $\Prob(M=m\mid X^{(m)}=x^{(m)})$.
\end{itemize}

Before presenting our proofs, we quickly discuss an important implication of the MAR condition in Assumption \ref{asm_true_MDM}. As discussed in \cite{näf2024goodimputationmarmissingness} and also discussed in Section \ref{Sec_Motivating}, non-monotone MAR in dimensions $d > 2$ allows for almost arbitrary distribution shifts and can lead to identifiably issues. However, a particularly important implication of MAR, which will be the basis of much of our result, is that for any $\theta \in \Theta$,
\begin{align}\label{eq:importantMARimplication}
    \Prob(M=m \mid x^{(m)})&=\int \Prob(M=m \mid x) p_{\theta^*}(x^{-(m)} \mid x^{(m)})  d x^{-(m)} \nonumber \\
    &=\int \Prob(M=m \mid x) p_{\theta}(x^{-(m)} \mid x^{(m)})  d x^{-(m)}.
\end{align}
This is crucial, as without this condition, $\Prob(M=m \mid x^{(m)})$ depends on $\theta^*$, even if $\Prob(M=m \mid x)$ does not. In other words, even under our assumption that $\Prob(M=m \mid X)$ is fixed and not influenced by the choice of $\theta$, this will no longer be true for $\Prob(M=m \mid x^{(m)})$ under MNAR.

We start by defining a MAR-adapted version of the Hellinger distance by
\begin{align*}
    \HeMAR^2(p_{\theta_1}, p_{\theta_2})=\sum_{m} \int |\sqrt{p_{\theta_1}^{(m)}}(x^{(m)}) - \sqrt{p_{\theta_2}^{(m)}}(x^{(m)})|^2 \Prob(M=m \mid x^{(m)}) \mathrm{d}x^{(m)}. 
\end{align*}

Crucially, we can bound this as follows:

\begin{restatable}[Hellinger bounds]{prop}{Hellbound}\label{Hellingerbounding}
    Assume Assumption \ref{asm_true_MDM} holds true with $\Prob(M=0 \mid x) > \delta > 0$. Then for any $\theta_1, \theta_2 \in \Theta$
    \begin{align*}
    \delta \He^2(p_{\theta_1}, p_{\theta_2})\leq \HeMAR^2(p_{\theta_1}, p_{\theta_2})\leq \He^2(p_{\theta_1}, p_{\theta_2}).
\end{align*}
\end{restatable}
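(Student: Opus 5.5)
We prove the two inequalities separately. Both rest on the MAR identity \eqref{eq:importantMARimplication}: $\Prob(M=m\mid x^{(m)})=\Prob(M=m\mid x)$ for almost all $x$, so it can be used either as a function of $x^{(m)}$ or as a function of the full $x$.

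\emph{Lower bound.} Every summand in $\HeMAR^2$ is nonnegative, so we may drop all patterns except $m=0$. For $m=0$ we have $x^{(0)}=x$ and $p_\theta^{(0)}=p_\theta$. This gives
\[
\HeMAR^2(p_{\theta_1},p_{\theta_2})\ \ge\ \int \bigl(\sqrt{p_{\theta_1}}(x)-\sqrt{p_{\theta_2}}(x)\bigr)^2\,\Prob(M=0\mid x)\,\mathrm{d}x \ \ge\ \delta\,\He^2(p_{\theta_1},p_{\theta_2}),
\]
where the last step uses $\Prob(M=0\mid x)>\delta$.

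\emph{Upper bound.} Fix a pattern $m$ and set $h_m(x^{(m)})=\Prob(M=m\mid x^{(m)})$. We want to compare $\bigl(\sqrt{p_{\theta_1}^{(m)}}-\sqrt{p_{\theta_2}^{(m)}}\bigr)^2$ with $\int(\sqrt{p_{\theta_1}}-\sqrt{p_{\theta_2}})^2\,\mathrm{d}x^{-(m)}$. Expanding the square, we have
\[
\bigl(\sqrt{p_{\theta_1}^{(m)}}-\sqrt{p_{\theta_2}^{(m)}}\bigr)^2 = p_{\theta_1}^{(m)}+p_{\theta_2}^{(m)}-2\sqrt{p_{\theta_1}^{(m)}p_{\theta_2}^{(m)}}.
\]
By Cauchy--Schwarz, $\int\sqrt{p_{\theta_1}p_{\theta_2}}\,\mathrm{d}x^{-(m)}\le \sqrt{p_{\theta_1}^{(m)}p_{\theta_2}^{(m)}}$. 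Together these give the pointwise bound
\[
\bigl(\sqrt{p_{\theta_1}^{(m)}}(x^{(m)})-\sqrt{p_{\theta_2}^{(m)}}(x^{(m)})\bigr)^2\le \int\bigl(\sqrt{p_{\theta_1}}(x)-\sqrt{p_{\theta_2}}(x)\bigr)^2\,\mathrm{d}x^{-(m)},
\]
which is the usual fact that marginalization contracts Hellinger distance, here at the level of each fixed $x^{(m)}$.

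Next, multiply this bound by $h_m(x^{(m)})\ge0$ and integrate over $x^{(m)}$. Since $h_m(x^{(m)})=\Prob(M=m\mid x)$ by MAR, the resulting integrand is $(\sqrt{p_{\theta_1}}-\sqrt{p_{\theta_2}})^2(x)\,\Prob(M=m\mid x)$ integrated over all of $x$. Summing over $m$, Tonelli allows us to exchange sum and integral, and $\sum_m\Prob(M=m\mid x)=1$ then yields $\HeMAR^2\le\He^2$.

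For $m=\mathbbm{1}$ there are no observed coordinates and the marginal densities are both equal to $1$, so this term vanishes. Assumption \ref{asm_no_empty_MDM} is therefore not even needed for the bound.

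The only delicate point is the pointwise conditional version of the data-processing inequality. The Cauchy--Schwarz step above handles it directly, and measurability issues are routine by Tonelli. The key conceptual step is replacing $\Prob(M=m\mid x^{(m)})$ by $\Prob(M=m\mid x)$, and this is exactly where MAR is used.
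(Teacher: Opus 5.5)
Your proof is correct and is essentially the paper's argument. The lower bound keeps only the $m=0$ term. The upper bound applies the Cauchy--Schwarz inequality $\int\sqrt{p_{\theta_1}p_{\theta_2}}\,\mathrm{d}x^{(-m)}\le\sqrt{p_{\theta_1}^{(m)}p_{\theta_2}^{(m)}}$ for each fixed $x^{(m)}$, then uses MAR and $\sum_m\Prob(M=m\mid x)=1$. One small point: the identity $\Prob(M=m\mid x^{(m)})=\Prob(M=m\mid x)$ you rely on is Assumption~\ref{asm_true_MDM} itself, not \eqref{eq:importantMARimplication}, which is a consequence of it.
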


\begin{proof}
    We first note that by assumption,
    \begin{align*}
            \HeMAR^2(p_{\theta_1}, p_{\theta_2})&=\sum_{m} \int |\sqrt{p_{\theta_1}^{(m)}(x^{(m)})} - \sqrt{p_{\theta_2}^{(m)}(x^{(m)})}|^2 \Prob(M=m \mid x^{(m)}) \mathrm{d}x^{(m)}\\
            &\geq  \int |\sqrt{p_{\theta_1}(x)} - \sqrt{p_{\theta_2}(x)}|^2 \delta \mathrm{d}x^{(m)}\\
            &=  \delta \He^2(p_{\theta_1}, p_{\theta_2})
    \end{align*}

For the second inequality, we note that for any $m$,
\begin{align}\label{eq_fulldh}
    &\int |\sqrt{p_{\theta_1}^{(m)}(x^{(m)})} - \sqrt{p_{\theta_2}^{(m)}(x^{(m)})}|^2 \Prob(M=m \mid x^{(m)}) \mathrm{d}x^{(m)}\nonumber\\
    &=\int \left(p_{\theta_1}^{(m)}(x^{(m)}) + p_{\theta_2}^{(m)}(x^{(m)}) - 2\sqrt{p_{\theta_1}^{(m)}(x^{(m)})p_{\theta_2}^{(m)}(x^{(m)})} \right) \Prob(M=m \mid x^{(m)}) \mathrm{d}x^{(m)}.
\end{align}
In comparison,
\begin{align}\label{eq_missingdh}
&\int |\sqrt{p_{\theta_1}(x)} - \sqrt{p_{\theta_2}(x)}|^2  \Prob(M=m \mid x^{(m)}) \, \mathrm{d}x \nonumber \\
&= \int \left(p_{\theta_1}(x) + p_{\theta_2}(x) - 2\sqrt{p_{\theta_1}(x)p_{\theta_2}(x)}\right) \Prob(M=m \mid x^{(m)}) \, \mathrm{d}x  \nonumber \\
&= \int \Prob(M=m \mid x^{(m)}) \left(\int p_{\theta_1}(x) \, \mathrm{d}x^{(-m)} + \int p_{\theta_2}(x) \, \mathrm{d}x^{(-m)} - 2\int \sqrt{p_{\theta_1}(x)p_{\theta_2}(x)} \, \mathrm{d}x^{(-m)}\right) \mathrm{d}x^{(m)}\nonumber\\
&= \int  \left(p_{\theta_1}^{(m)}(x^{(m)}) + p_{\theta_2}^{(m)}(x^{(m)}) - 2\int\sqrt{p_{\theta_1}(x)p_{\theta_2}(x)} \, \mathrm{d}x^{(-m)}\right)\Prob(M=m \mid x^{(m)}) \mathrm{d}x^{(m)}.
\end{align}

By the Cauchy-Schwarz inequality applied to $\sqrt{p_{\theta_1}},\sqrt{p_{\theta_2}}$, for each fixed $x^{(m)}$:
\begin{align*}
    \sqrt{p_{\theta_1}^{(m)}(x^{(m)})p_{\theta_2}^{(m)}(x^{(m)})}&=\sqrt{\int p_{\theta_1}(x) \, \mathrm{d}x^{(-m)} \int p_{\theta_2}(x) \, \mathrm{d}x^{(-m)}}\\
    &\geq \int \sqrt{p_{\theta_1}(x)  p_{\theta_2}(x)} \, \mathrm{d}x^{(-m)},
\end{align*}
showing that
\begin{align*}
\int \sqrt{p_{\theta_1}^{(m)}(x^{(m)})p_{\theta_2}^{(m)}(x^{(m)})}\Prob(M=m \mid x^{(m)}) \mathrm{d}x^{(m)} \geq \int \int \sqrt{p_{\theta_1}(x)p_{\theta_2}(x)} \, \mathrm{d}x^{(-m)} \Prob(M=m \mid x^{(m)})  \mathrm{d}x^{(m)},
\end{align*}
and thus that for each $m$, \eqref{eq_fulldh} $\leq$ \eqref{eq_missingdh}. Since $\HeMAR^2(p_{\theta_1}, p_{\theta_2})$ is the sum of \eqref{eq_fulldh} over all $m$, and since, by MAR, $\Prob(M=m \mid x^{(m)})=\Prob(M=m \mid x)$, it follows that
\begin{align*}
   \HeMAR^2(p_{\theta_1}, p_{\theta_2}) &\leq \sum_{m} 
\int |\sqrt{p_{\theta_1}(x)} - \sqrt{p_{\theta_2}(x)}|^2  \Prob(M=m \mid x) \, \mathrm{d}x\\
&=\int |\sqrt{p_{\theta_1}(x)} - \sqrt{p_{\theta_2}(x)}|^2 \, \mathrm{d}x\\
&=\He^2(p_{\theta_1}, p_{\theta_2}).
\end{align*}

\end{proof}

\KLdef*

\begin{proof}
First, $\KLMAR(P_{\theta^*}\| P_{\theta})$ is clearly zero for $\theta=\theta^*$. Moreover, using the MAR nature of the missing mechanism, the inequality $\log(x)\leq x-1$, and basic algebra: 
\begin{align*}
    &\mathbb{E}_{(X,M)\sim\mathbb{P}_{\theta^*}}\left[ \log \left( \frac{p_{\theta}^{(M)}(X^{(M)})}{p_{\theta^*}^{(M)}(X^{(M)})} \right) \right] = \sum_{m}  \int_x  \log \left( \frac{p_{\theta}^{(m)}(x^{(m)})}{p_{\theta^*}^{(m)}(x^{(m)})}\right) p_{\theta^*}(x) \Prob(M=m \mid x) \mathrm{d}x \\
    & = \sum_{m}  \int_x  \log \left( \frac{p_{\theta}^{(m)}(x^{(m)})}{p_{\theta^*}^{(m)}(x^{(m)})}\right) p_{\theta^*}(x) \Prob(M=m \mid x^{(m)}) \mathrm{d}x \\
    & = \sum_{m}  \int_{x^{(m)}}  \log \left( \frac{p_{\theta}^{(m)}(x^{(m)})}{p_{\theta^*}^{(m)}(x^{(m)})}\right) \left( \int_{x^{(-m)}} p_{\theta^*}(x)  \mathrm{d}x^{(-m)} \right) \Prob(M=m \mid x^{(m)}) \mathrm{d}x^{(m)} \\
    & = \sum_{m}  \int_{x^{(m)}}  \log \left( \frac{p_{\theta}^{(m)}(x^{(m)})}{p_{\theta^*}^{(m)}(x^{(m)})}\right) p_{\theta^*}^{(m)}(x^{(m)}) \Prob(M=m \mid x^{(m)}) \mathrm{d}x^{(m)} \\
    & \leq \sum_{m}  \int_{x^{(m)}}  \left(\frac{p_{\theta}^{(m)}(x^{(m)})}{p_{\theta^*}^{(m)}(x^{(m)})}-1\right) p_{\theta^*}^{(m)}(x^{(m)}) \Prob(M=m \mid x^{(m)}) \mathrm{d}x^{(m)}\\
    & = \sum_{m}  \int_{x^{(m)}}  \frac{p_{\theta}^{(m)}(x^{(m)})}{p_{\theta^*}^{(m)}(x^{(m)})} p_{\theta^*}^{(m)}(x^{(m)}) \Prob(M=m \mid x^{(m)}) \mathrm{d}x^{(m)} - \sum_{m}  \int_{x^{(m)}}  p_{\theta^*}^{(m)}(x^{(m)}) \Prob(M=m \mid x^{(m)}) \mathrm{d}x \\
    &=\sum_{m}  \int_{x^{(m)}}  p_{\theta}^{(m)}(x^{(m)})  \Prob(M=m \mid x^{(m)}) \mathrm{d}x^{(m)} - \sum_{m}  \int_{x^{(m)}}  p_{\theta^*}^{(m)}(x^{(m)})  \Prob(M=m \mid x^{(m)}) \mathrm{d}x \\
    &=\sum_{m} \int_{x^{(m)}} \left( \int_{x^{(-m)}} p_{\theta}(x) dx^{(-m)} \right) \Prob(M=m \mid x^{(m)}) \mathrm{d}x^{(m)} \\
    & \hspace{5cm} - \sum_{m} \int_{x^{(m)}} \left( \int_{x^{(-m)}} p_{\theta^*}(x) dx^{(-m)} \right) \Prob(M=m \mid x^{(m)}) \mathrm{d}x^{(m)} \\
    &=\sum_{m}  \int_x  p_{\theta}(x)  \Prob(M=m \mid x^{(m)}) \mathrm{d}x - \sum_{m}  \int_x  p_{\theta^*}(x)  \Prob(M=m \mid x^{(m)}) \mathrm{d}x \\
    &=\sum_{m}  \int_x  p_{\theta}(x)  \Prob(M=m \mid x) \mathrm{d}x - \sum_{m}  \int_x  p_{\theta^*}(x)  \Prob(M=m \mid x) \mathrm{d}x \\
    &=1-1 \\
    &=0 \, ,
\end{align*}
which ends the proof.

\end{proof}

\KLloss*

\begin{proof}
$\KLMAR(P_{\theta^*}\| P_{\theta}) \geq 0$ has been proven in \Cref{definition_KL}. The show the other inequality $\KLMAR(P_{\theta^*}\| P_{\theta})\leq \KL(P_{\theta^*}\| P_{\theta})$, first write
    \begin{align*}
        \KLMAR&(P_{\theta^*}\| P_{\theta})=\E_{(X,M)\sim\mathbb{P}_{\theta^*}}\left[ \log \left( \frac{p_{\theta^*}^{(M)}(X^{(M)})}{p_{\theta}^{(M)}(X^{(M)})} \right) \right]\\
        &=\E_{(X,M)\sim\mathbb{P}_{\theta^*}}\left[ \log \left( \frac{p_{\theta^*}^{(M)}(X^{(M)})}{p_{\theta}^{(M)}(X^{(M)})} \right) \mathbbm{1}\left(M=0\right) + \log \left( \frac{p_{\theta^*}^{(M)}(X^{(M)})}{p_{\theta}^{(M)}(X^{(M)})} \right) \mathbbm{1}\left(M\ne0\right) \right] \\
        &=\E_{(X,M)\sim\mathbb{P}_{\theta^*}}\left[ \log \left( \frac{p_{\theta^*}(X)}{p_{\theta}(X)} \right) \mathbbm{1}\left(M=0\right) + \log \left(\frac{p_{\theta^*}(X)}{p_{\theta}(X)}\frac{p_{\theta}(X^{(-M)} \mid X^{(M)})}{p_{\theta^*}(X^{(-M)} \mid X^{(M)})} \right) \mathbbm{1}\left(M\ne0\right) \right] \\
        &=\E_{(X,M)\sim\mathbb{P}_{\theta^*}}\left[ \log \left( \frac{p_{\theta^*}(X)}{p_{\theta}(X)} \right) + \log \left(\frac{p_{\theta}(X^{(-M)} \mid X^{(M)})}{p_{\theta^*}(X^{(-M)} \mid X^{(M)})} \right) \mathbbm{1}\left(M\ne0\right) \right] \\
        &=\E_{(X,M)\sim\mathbb{P}_{\theta^*}}\left[ \log \left( \frac{p_{\theta^*}(X)}{p_{\theta}(X)} \right) \right]+ \E_{(X,M)\sim\mathbb{P}_{\theta^*}}\left[ \log \left( \frac{p_{\theta}(X^{(-M)} \mid X^{(M)})}{p_{\theta^*}(X^{(-M)} \mid X^{(M)})} \right) \mathbbm{1}\left(M\ne0\right) \right] \, .
    \end{align*}
    The first expectation in the sum is exactly $\KL(P_{\theta^*}\| P_{\theta})$, while we may write the second expectation as
\begin{align*}
    &\E_{(X,M)\sim\mathbb{P}_{\theta^*}}\left[ \log \left( \frac{p_{\theta}(X^{(-M)} \mid X^{(M)})}{p_{\theta^*}(X^{(-M)} \mid X^{(M)})} \right) \mathbbm{1}\left(M\ne0\right) \right]\\
    &=\sum_{m \neq 0} \int_x \log \left( \frac{p_{\theta}(x^{(-m)} \mid x^{(m)})}{p_{\theta^*}(x^{(-m)} \mid x^{(m)})} \right) p_{\theta^*}(x) \Prob(M=m \mid x) \mathrm{d}x\\
    &=\sum_{m \neq 0} \int_x \log \left( \frac{p_{\theta}(x^{(-m)} \mid x^{(m)})}{p_{\theta^*}(x^{(-m)} \mid x^{(m)})} \right) p_{\theta^*}(x^{(m)}) p_{\theta^*}(x^{(-m)} \mid x^{(m)}) \Prob(M=m \mid x) \mathrm{d}x\\
    &\leq\sum_{m \neq 0} \int_x \left( \frac{p_{\theta}(x^{(-m)} \mid x^{(m)})}{p_{\theta^*}(x^{(-m)} \mid x^{(m)})} - 1 \right) p_{\theta^*}(x^{(m)}) p_{\theta^*}(x^{(-m)} \mid x^{(m)}) \Prob(M=m \mid x) \mathrm{d}x\\
    &=\sum_{m \neq 0} \int_x p_{\theta^*}(x^{(m)}) p_{\theta}(x^{(-m)} \mid x^{(m)}) \Prob(M=m \mid x) \mathrm{d}x \\
    &\hspace{2cm} - \sum_{m \neq 0} \int_x p_{\theta^*}(x^{(m)}) p_{\theta^*}(x^{(-m)} \mid x^{(m)}) \Prob(M=m \mid x) \mathrm{d}x \\
    &=\sum_{m \neq 0} \int_x p_{\theta^*}(x^{(m)}) p_{\theta}(x^{(-m)} \mid x^{(m)}) \Prob(M=m \mid x^{(m)}) \mathrm{d}x \\
    &\hspace{2cm} - \sum_{m \neq 0} \int_x p_{\theta^*}(x^{(m)}) p_{\theta^*}(x^{(-m)} \mid x^{(m)}) \Prob(M=m \mid x^{(m)}) \mathrm{d}x \\
    &=\sum_{m \neq 0} \int_{x^{(m)}} p_{\theta^*}(x^{(m)}) \left( \int_{x^{(-m)})} p_{\theta}(x^{(-m)} \mid x^{(m)}) \mathrm{d}x^{(-m)} \right) \Prob(M=m \mid x^{(m)})\mathrm{d}x^{(m)} \\
    &\hspace{2cm} - \sum_{m \neq 0} \int_{x^{(m)}} p_{\theta^*}(x^{(m)}) \left( \int_{x^{(-m)})} p_{\theta^*}(x^{(-m)} \mid x^{(m)}) \mathrm{d}x^{(-m)} \right) \Prob(M=m \mid x^{(m)}) \mathrm{d}x^{(m)} \\
    &=\sum_{m \neq 0} \int_{x^{(m)}} p_{\theta^*}(x^{(m)}) \Prob(M=m \mid x^{(m)}) \mathrm{d}x^{(m)} - \sum_{m \neq 0} \int_{x^{(m)}} p_{\theta^*}(x^{(m)}) \Prob(M=m \mid x^{(m)}) \mathrm{d}x^{(m)} \\
    &=0 \, .
\end{align*}
Thus by the same arguments as in \Cref{definition_KL}, the second expectation is smaller than or equal to zero, leading to $\KLMAR(P_{\theta^*}\| P_{\theta}) \leq  \KL(P_{\theta^*}\| P_{\theta})$. 

\vspace{0.2cm}
Regarding the implication of $\KLMAR(P_{\theta^*} \| P_{\theta}) > 0$ by $\KL(P_{\theta^*} \| P_{\theta}) > 0$ and $\Prob(M=0 \mid X=x) > 0$, notice that if $\KL(P_{\theta^*} \| P_{\theta}) > 0$, then there exists a set $A$ with $P_{\theta^*}(A) > 0$, such that $\log \left( \frac{p_{\theta^*}(x) }{p_{\theta}(x)}\right) > 0$. 
Then, we have the strict inequality $\log\left(\frac{p_{\theta}(x)}{p_{\theta^*}(x)}\right) < \frac{p_{\theta}(x)}{p_{\theta^*}(x)}-1$ for $x \in A$ (since ${p_{\theta^*}(x)}\ne{p_{\theta}(x)}$ on $A$), which along with $\Prob(M=0 \mid X=x) > 0$ for all $x \in \mathcal{X}$ and $P_{\theta^*}(A) > 0$ implies:
$$
\int_{x\in A} \log \left( \frac{p_{\theta}(x)}{p_{\theta^*}(x)}\right) \Prob(M=0 \mid X=x) p_{\theta^*}(x) \mathrm{d}x < \int_{x\in A} \left( \frac{p_{\theta}(x)}{p_{\theta^*}(x)}-1\right) \Prob(M=0 \mid X=x) p_{\theta^*}(x) \mathrm{d}x \, .
$$
Hence, following the same arguments as in \Cref{definition_KL},
\begin{align*}
    &\mathbb{E}_{(X,M)\sim\mathbb{P}_{\theta^*}}\left[ \log \left( \frac{p_{\theta}^{(M)}(X^{(M)})}{p_{\theta^*}^{(M)}(X^{(M)})} \right) \right] \\
    & = \sum_{m}  \int_{x^{(m)}}  \log \left( \frac{p_{\theta}^{(m)}(x^{(m)})}{p_{\theta^*}^{(m)}(x^{(m)})}\right) p_{\theta^*}^{(m)}(x^{(m)}) \Prob(M=m \mid x^{(m)}) \mathrm{d}x^{(m)} \\
    & = \int_{x}  \log \left( \frac{p_{\theta}(x)}{p_{\theta^*}(x)}\right) p_{\theta^*}(x) \Prob(M=0 \mid X=x) \mathrm{d}x \\
    & \hspace{5cm} + \sum_{m\ne0}  \int_{x^{(m)}}  \log \left( \frac{p_{\theta}^{(m)}(x^{(m)})}{p_{\theta^*}^{(m)}(x^{(m)})}\right) p_{\theta^*}^{(m)}(x^{(m)}) \Prob(M=m \mid x^{(m)}) \mathrm{d}x^{(m)} \\
    & = \int_{x\in A} \log \left( \frac{p_{\theta}(x)}{p_{\theta^*}(x)}\right) p_{\theta^*}(x) \Prob(M=0 \mid x) \mathrm{d}x + \int_{x\notin A} \log \left( \frac{p_{\theta}(x)}{p_{\theta^*}(x)}\right) p_{\theta^*}(x) \Prob(M=0 \mid x) \mathrm{d}x \\
    & \hspace{5cm} + \sum_{m\ne0}  \int_{x^{(m)}}  \log \left( \frac{p_{\theta}^{(m)}(x^{(m)})}{p_{\theta^*}^{(m)}(x^{(m)})}\right) p_{\theta^*}^{(m)}(x^{(m)}) \Prob(M=m \mid x^{(m)}) \mathrm{d}x^{(m)} \\
    & < \int_{x\in A} \left( \frac{p_{\theta}(x)}{p_{\theta^*}(x)}-1\right) p_{\theta^*}(x) \Prob(M=0 \mid x) \mathrm{d}x + \int_{x\notin A}  \left( \frac{p_{\theta}(x)}{p_{\theta^*}(x)}-1\right) p_{\theta^*}(x) \Prob(M=0 \mid x) \mathrm{d}x \\
    & \hspace{5cm} + \sum_{m\ne0}  \int_{x^{(m)}} \left( \frac{p_{\theta}^{(m)}(x^{(m)})}{p_{\theta^*}^{(m)}(x^{(m)})}-1\right) p_{\theta^*}^{(m)}(x^{(m)}) \Prob(M=m \mid x^{(m)}) \mathrm{d}x^{(m)} \\
    & = \sum_{m}  \int_{x^{(m)}} \left( \frac{p_{\theta}^{(m)}(x^{(m)})}{p_{\theta^*}^{(m)}(x^{(m)})}-1\right) p_{\theta^*}^{(m)}(x^{(m)}) \Prob(M=m \mid x^{(m)}) \mathrm{d}x^{(m)} \\
    &=0 \, ,
\end{align*}
which shows that $\KLMAR(P_{\theta^*} \| P_{\theta}) > 0$. Finally, assume that $\Prob(M=0 \mid X=x)=0$ for $x \in A$ with $\Pjoint(A) > 0$. Then it is possible to construct a density $p_{\theta_1}$ that agrees with $p_{\theta^*}$ on $A^c$ and on all $k < d$ dimensional marginals, but for which $p_{\theta_1}(x)\neq p_{\theta^*}(x)$. But this difference is masked by $\PMzerox=0$, so that $\KLMAR(P_{\theta^*} \| P_{\theta_1}) = 0$.
\end{proof}

\Generalrates*

\begin{proof}
\bigskip

We will actually show that,
\begin{align}\label{eq_actualresult}
    \E_{\data \sim \Pthetajointn}\left[\Pi\left(d(\theta , \theta^*) \geq \CIV \varepsilon_n \, \big| \, X_1^{(M_1)}, \dots, X_n^{(M_n)}\right)\right] \to 0,
\end{align}
which implies the result.

\noindent \textit{Step 1: Extending the Testing Condition.} We start by combining Assumption \ref{asm_test2}, in combination with \ref{iii_coveringnumber} to achieve a more directly usable testing condition on $\Theta_n$: There exist $\CIV=\CIV(a,\CIII, \CI)$ and tests $\psi_n=\psi_n(\data)$ such that
\begin{align}\label{eq_psitests}
    \E_{S_n \sim \mathbb{P}^n_{\theta^*}}[\psi_n(\data)] \to 0 \quad \quad \sup_{\{\theta \in \Theta_n: d(\theta, \theta^*) > \CIV \varepsilon_n\}} \E_{S_n \sim \mathbb{P}^n_{\theta}}[1-\psi_n(\data)] \leq e^{-(\CI+4) n\varepsilon_n^2}.
\end{align}

To show this, we first define the sets
\[
A_j= \{ \theta \in  \Theta_n: d(\theta, \theta^*): \CIV j \varepsilon_n < d(\theta, \theta^*) \leq \CIV(j+1)  \varepsilon_n  \},
\]
such that 
\[
\{\theta \in  \Theta_n: d(\theta, \theta^*) > 2\CIV \varepsilon_n \}=\bigcup_{j=1}^{\infty} A_j.
\]
For a fixed but arbitrary $j\geq 1$, let now $(B_{l,j})_{l=1}$ be a minimal covering with radius $a (\CIV j \varepsilon_n)$, with $a$ taken from Assumption \ref{asm_test2}. Let $b_{j, l}$, $l=1, \ldots, {N(a \CIV j \varepsilon_n, A_j, d)}$ be the center of these balls. Then, note that $A_j$ and the covering ball $B_{l,j}$ must overlap, as otherwise we could remove $B_{l,j}$ and would not have a minimial covering. Taking, any $\theta \in A_j \cap B_{l,j}$, we then have:
\begin{align*}
   \CIV j \varepsilon_n < d(\theta, \theta^*) \leq  d(\theta^*, g_{j,l}) + d(g_{j,l}, \theta)  \leq d(\theta^*, g_{j,l}) + a \CIV j \varepsilon_n,
\end{align*}
or 
\begin{align}
    d(\theta^*, g_{j,l}) \geq   \CIV j \varepsilon_n - a \CIV j \varepsilon_n > \CIV j \varepsilon_n ,
\end{align}
since $a < 1$. By Assumption \ref{asm_test2}, with $\varepsilon$ taken to be $\CIV j \varepsilon_n$ and $\theta_1=g_{j,l}$, there exists a test $\phi_n^{j,l}$ such that 
\[
\mathbb{E}_{\mathcal{S}_n\sim\mathbb{P}_{\theta^*}^n}\left[\phi_n^{j,l}(\data)\right] \leq  e^{-\CIII n (\CIV j \varepsilon_n)^2} \quad , \quad  \sup_{\theta \in \Theta_n: d(\theta, g_{j,l} ) < a  \CIV j \varepsilon_n}\mathbb{E}_{\mathcal{S}_n\sim\mathbb{P}_{\theta}^n}\left[1-\phi_n^{j,l}(\data)\right] \leq e^{-\CIII n (\CIV j \varepsilon_n)^2}.
\]
We take 
\begin{align}\label{eq_newtest}
    \psi_n(\data)= \sup_{j,l} \phi_n^{j,l}(\data),
\end{align}
and note that choosing $\CIV \geq  1/a$ and since $ j \geq 1$ and $A_j \subset \Theta_n$
\begin{align*} 
    N(a \CIV j \varepsilon_n, A_j, d) \leq N( \varepsilon_n, A_j, d)\leq N( \varepsilon_n, \Theta_n, d) \leq e^{\CII n \varepsilon_n^2},
\end{align*}
where the last step followed because of Assumption \ref{iii_coveringnumber}. Then for the new test we have
\begin{align*}
   \E_{\data \sim \Pjointn} [\psi_n(\data)] &\leq \sum_{l=1}^{ N(a \CIV j \varepsilon_n, A_j, d) } \sum_{j=1}^{\infty} \E_{\data \sim \Pjointn} [\phi^{l,j}_n]\\
   &\leq  \sum_{j=1}^{\infty} N(a \CIV j \varepsilon_n, A_j, d) e^{-\CIII n (\CIV j \varepsilon_n)^2}\\
   &\leq  e^{\CII n \varepsilon_n^2} \sum_{j=1}^{\infty}  (e^{-\CIII n (\CIV \varepsilon_n)^2})^j\\
   &=  e^{\CII n \varepsilon_n^2} \frac{e^{-\CIII n (\CIV  \varepsilon_n)^2}}{1-e^{-\CIII n (\CIV  \varepsilon_n)^2}}\\
   &\leq \CI e^{(\CII-\CIII \CIV^2) n \varepsilon_n^2},
\end{align*}
which goes to zero for, $\CIV^2 > \CII/\CIII$. Moreover, 
\begin{align*}
   \sup_{\theta \in  \Theta_n: d(\theta, \theta^*) > \CIV \varepsilon_n }\E_{\data \sim \Pthetajointn} [1-\psi_n(\data)] &\leq \sup_{j,l} \sup_{\theta \in B_{j,l}} \E_{\data \sim \Pthetajointn} [1-\psi_n(\data)]\\
   &\leq \sup_{j,l} e^{-\CIII n (\CIV j \varepsilon_n)^2}\\
   &\leq \sup_{j,l} e^{-\CIII n (\CIV \varepsilon_n)^2}\\
   &\leq \sup_{j,l} e^{-(\CI+4)\varepsilon_n^2},
\end{align*}
for $\CIV^2 > (\CI+4)/\CIII$. Thus, taking $\CIV > \max(1/a,\sqrt{\CII/\CIII}, \sqrt{(\CI+4)/\CIII})$ gives the result.



\vspace{0.2cm}
\noindent \textit{Step 2: } 
Define the set
\begin{align*}
    \mathcal{C}_n= \{\theta \in \Theta: d(\theta, \theta^*) \geq \CIV \varepsilon_n\} 
\end{align*}
Moreover, to make use of Assumption \ref{v_priormassII}, we define:
\begin{align}\label{Omegaeq2}
    \Omega_n(r) = \left\{\int \prod_{i=1}^n \frac{p_{\theta}^{(M_i)}\left(X_i^{(M_i)}\right)}{p_{\theta^*}^{(M_i)}\left(X_i^{(M_i)}\right)} d\Pi(\theta) \geq \Pi(\mathcal{B}(\theta^*,r; \mathbb{P}_{\theta^*})) e^{-2 n r^2} \right\} .
\end{align}

We now use the tests in \eqref{eq_psitests} from Step 1:
\begin{align*}
    &\E_{\data \sim \Pjointn}\left[\Pi\left(\mathcal{C}_n \, \big| \data \right)\right]\\
    &=\E_{\data \sim \Pjointn}\left[\psi_n(\data)\Pi\left(\mathcal{C}_n \, \big| \, \data\right)\right] + \E_{\data \sim \Pjointn}\left[(1-\psi_n(\data))\Pi\left(\mathcal{C}_n \, \big| \, \data\right)\right]\\
    &\leq \underbrace{\E_{\data \sim \Pjointn}\left[\psi_n(\data)\right]}_{\to 0} + \E_{\data \sim \Pjointn}\left[(1-\psi_n(\data))\Pi\left(\mathcal{C}_n \, \big| \, \data\right)\right].
\end{align*}
For the second element in the previous sum:
\begin{align*}
    &\E_{\data \sim \Pjointn}\left[(1-\psi_n(\data))\Pi\left(\mathcal{C}_n \, \big| \, \data\right)\right]\\
    &=\E_{\data \sim \Pjointn}\left[(1-\psi_n(\data))\Pi\left(\mathcal{C}_n \, \big| \, \data\right) \mathbf{1}_{\Omega_n(\bar{\varepsilon}_n)}\right] + \E_{\data \sim \Pjointn}\left[(1-\psi_n(\data))\Pi\left(\mathcal{C}_n \, \big| \, \data\right)\mathbf{1}_{\Omega_n(\bar{\varepsilon}_n)^c}\right]\\
       &\leq \E_{\data \sim \Pjointn}\left[(1-\psi_n(\data))\Pi\left(\mathcal{C}_n \, \big| \, \data\right) \mathbf{1}_{\Omega_n(\bar{\varepsilon}_n)}\right] +  \underbrace{\Pjointn(\Omega_n(\bar{\varepsilon}_n)^c)}_{\to 0},
\end{align*}
where the convergence to zero follows from Lemma \ref{Omegaanbound} using $r=\bar{\varepsilon}_n$ and the fact that $n \bar{\varepsilon}_n^2 \to \infty$. Finally, for the last expectation, we have, using the definition of $\Omega_n(\bar{\varepsilon}_n)$, as well as Assumption \ref{v_priormassII}),
\begin{align*}
    &\E_{\data \sim \Pjointn}\left[(1-\psi_n(\data))\Pi\left(\mathcal{C}_n \, \big| \, \data\right) \mathbf{1}_{\Omega_n(\bar{\varepsilon}_n)}\right] \\
    &\leq \frac{e^{2n \bar{\varepsilon}_n^2}}{\Pi(\mathcal{B}(\theta^*,\bar{\varepsilon}_n; \mathbb{P}_{\theta^*}))}\E_{\data \sim \Pjointn}\left[(1-\psi_n(\data)) \mathbf{1}_{\Omega_n(\bar{\varepsilon}_n)}\int_{\mathcal{C}_n}  \prod_{i=1}^n \frac{p_{\theta}^{(M_i)}\left(X_i^{(M_i)}\right)}{p_{\theta^*}^{(M_i)}\left(X_i^{(M_i)}\right)} d\Pi(\theta)\right] \\
& \leq    \frac{e^{2 n \bar{\varepsilon}_n^2 }}{\Pi(\mathcal{B}(\theta^*,\bar{\varepsilon}_n; \mathbb{P}_{\theta^*}))}\int_{\mathcal{C}_n}  \E_{\data \sim \Pthetajointn}\left[(1-\psi_n(\data)) \right] d\Pi(\theta) \\
&\leq e^{n \bar{\varepsilon}_n^2 (2+\CI)}\int_{\mathcal{C}_n}  \E_{\data \sim \Pthetajointn}\left[(1-\psi_n(\data)) \right] d\Pi(\theta)\\
&=e^{n \bar{\varepsilon}_n^2 (2+\CI)}\int_{\mathcal{C}_n \cap \Theta_n}  \E_{\data \sim \Pthetajointn}\left[(1-\psi_n(\data)) \right] d\Pi(\theta) + e^{n \bar{\varepsilon}_n^2 (2+\CI)}\int_{\mathcal{C}_n \cap \Theta_n^c}  \E_{\data \sim \Pthetajointn}\left[(1-\psi_n(\data)) \right] d\Pi(\theta)\\
& \leq e^{n \bar{\varepsilon}_n^2 (2+\CI)}\int_{\mathcal{C}_n \cap \Theta_n}  \E_{\data \sim \Pthetajointn}\left[(1-\psi_n(\data)) \right] d\Pi(\theta) + e^{n \bar{\varepsilon}_n^2 (2+\CI)} \Pi(\Theta_n^c).
\end{align*}
For the second element of the sum, we obtain $e^{n \bar{\varepsilon}_n^2 (2+\CI)} \Pi(\Theta_n^c) \leq e^{n \bar{\varepsilon}_n^2 (2+\CI)} e^{-n \bar{\varepsilon}_n^2(4+\CI)} \to 0$ by Assumption \ref{iv_priormassI}. Similarly, for the first element of the sum, we get 
\begin{align*}
    e^{-n \bar{\varepsilon}_n^2 (2+\CI)}\int_{\mathcal{C}_n \cap \Theta_n}  \E_{\data \sim \Pthetajointn}\left[(1-\psi_n(\data)) \right] d\Pi(\theta) \leq e^{-n \bar{\varepsilon}_n^2 (2+\CI)} e^{-(\CI+4) n\varepsilon_n^2} \to 0,
\end{align*}
by \eqref{eq_psitests}. Thus $\E_{\data \sim \Pjointn}\left[(1-\psi_n(\data))\Pi\left(\mathcal{C}_n \, \big| \, \data\right) \mathbf{1}_{\Omega_n(\bar{\varepsilon}_n)}\right] \to 0$, and we obtain the result.


\end{proof}

\Densityrates*

\begin{proof}
We largely follow the arguments in \cite[Chapter 9.4]{Fundamentals} and study the following sieve space:
\begin{align*}
    \Theta_n=\{p_{F, \Sigma}: F \in \mathcal{F}_{\NI, \varepsilon,a}, S \in \mathcal{D}_{\sigma, \varepsilon} \}
\end{align*}
with $\varepsilon, a, \sigma$ positive and $\NI$ integer,
\begin{align*}
    \mathcal{F}_{\NI, \varepsilon,a}&=\left\{ \sum_{j=1}^{\infty} w_j \delta_{z_j}: \sum_{j=\NI+1}^{\infty} w_j < \varepsilon^2, z_1, \ldots, z_{\NI} \in [-a,a]^d \right\}.\\
    \mathcal{D}_{\sigma, \varepsilon}&=\{\Sigma: \sigma^2 \leq \lambda_1(\Sigma) \leq \lambda_d(\Sigma) < \sigma^2 (1+ \varepsilon^2)^n\},
\end{align*}
where $\lambda_1(\Sigma), \ldots, \lambda_d(\Sigma)$ denote the eigenvalues of a matrix $\Sigma$ ordered by size. In the following, $\sigma, \varepsilon$ will decrease at rates specified below, while $a$ and $\NI$ are intended to increase.

Now, to prove \ref{v_priormassII}, we use the construction in the proof of (ii) of Proposition 9.14 in \cite{Fundamentals} to obtain
\begin{itemize}
    \item a partition $\{U_1, \ldots, U_{\NI'}\}$, with $\NI' \lesssim  \NI$ of $[-a_{\sigma}, a_{\sigma}]^d$ of diameter at most $\sigma$.
    \item a partition $U_1, \ldots, U_{\NII}$ of $\R^d$ of size $\NII \lesssim  \NI$, with the property $(\sigma \varepsilon^2)^d \lesssim  \alpha(U_j) \lesssim 1$,
\end{itemize}
where $a_{\sigma}=\alpha_0 \logm(\sigma)^{1/\tau}$ for some $\alpha_0$ large. Given the number of sets $\NII$, we then construct a set $B_{\varepsilon, \NII, \sigma, \beta} \subset \Theta$, such that for some $C >0$,
\[
B_{\varepsilon, \NII, \sigma, \beta} \subset \left\{(F, \Sigma): \He^2(p_{\theta^*}, p_{F,\Sigma}) \lesssim \sigma^{2\beta} + \varepsilon^2 \right\},
\]
and for small enough, $\sigma, \varepsilon$
\begin{align}\label{eq_Blowerbound}
    \Pi(B_{\varepsilon, \NII, \sigma, \beta}) \geq e^{-c (\logm(\sigma))^{d/\tau} \sigma^{-d} (\logm(\varepsilon))^{d+1} + \sigma^{-\kappa}}
\end{align}
exactly as in Equation (9.14) of \cite[Chapter 9]{Fundamentals}. Crucially, \cite{Fundamentals} now show that
\begin{align*}
    &\left\{(F, \Sigma): \He^2(p_{\theta^*}, p_{F,\Sigma}) \lesssim \sigma^{2\beta} + \varepsilon^2 \right\} \\
    &\subset \left \{ \theta: \E_{X \sim P_{\theta^*}}\left[\log\left(  \frac{p_{\theta^*}(X)}{p_{\theta}(X)}\right)\right] \leq \varepsilon_n^2, \E_{X \sim P_{\theta^*}}\left[\log\left(  \frac{p_{\theta^*}(X)}{p_{\theta}(X)}\right)^2\right] \leq \varepsilon_n^2 \right \},
\end{align*}
 for $\sigma, \varepsilon$ chosen such that
 \[
 (\sigma^{2\beta} + \varepsilon^2) \left[ \logm\left(\frac{\varepsilon}{\sigma^d}\right)  \right]^2 \lesssim \varepsilon_n^2, \frac{\varepsilon^4}{\sigma^d} < 1, (\logm(\sigma))^{d/\tau} \sigma^{-d} (\logm(\varepsilon))^{d+1} + \sigma^{-\kappa} \leq n\varepsilon_n^2.
 \]
We now adapt this argument for our purposes. Since the diameter of each $U_{j}$ in the partition $\{U_1, \ldots, U_{\NI'}\}$ of $[-a_{\sigma}, a_{\sigma}]^d $ is at most $\sigma$, it can be shown that for any $x \in [-a_{\sigma}, a_{\sigma}]^d$ and $(F, \Sigma) \in B_{\varepsilon, \NII, \sigma, \beta}$,
\begin{align}\label{eq_lowerbound_1}
    \frac{1}{p_{F,\Sigma}(x)}  \leq \frac{\sigma^d}{e^{-1}} \frac{1}{F(U_{j(x))})}\leq \frac{\sigma^d}{e^{-1} \varepsilon^4},
\end{align}
by the construction of $B_{\varepsilon, \NII, \sigma, \beta}$. Moreover, for any $x \notin [-a_{\sigma}, a_{\sigma}]^d$, it can be shown that
\begin{align}\label{eq_lowerbound_2}
    \frac{1}{p_{F,\Sigma}(x)} \lesssim \sigma^d e^{2d \|x\|^2/\sigma^2}
\end{align}

Now for $(F, \Sigma) \in B_{\varepsilon, \NII, \sigma, \beta}$ construct the new set of measures $p_{m,\theta^*,F,\Sigma}(x)=p_{\theta^*}^{(m)}(x^{(m)})p_{F,\Sigma}(x^{(-m)} \mid x^{(m)})$. We first note that these are well-defined densities a.s., since $p_{F,\Sigma}$ is supported on all of $\R^d$. Moreover, we have that for all $m \neq 1$,  $d^{(-m)}=\sum_{j=1}^{d} m_j \leq d-1$, and $\sigma < 1$ small,
\begin{align*}
    p_{m,\theta^*,F,\Sigma}(x) \leq \frac{2\|p_{\theta^*}^{(m)} \|_{\infty}}{\sigma^{d^{(-m)}}} \leq \frac{2\max_{m}\|p_{\theta^*}^{(m)} \|_{\infty}}{\sigma^{d-1}} \text{ for all } x \in \mathcal{X}, m\in\{0,1\}^d\setminus \{1\}.
\end{align*}
Indeed, for $(F, \Sigma) \in B_{\varepsilon, \NII, \sigma, \beta}$, we have $p_{F,\Sigma}( x)=\int \phi(x-z, \Sigma) d F(z) $, so that by the bound in \Cref{lem_Gaussianbound}, for all $m \neq 1$,
\begin{align*}
    \sup_{x \in \mathcal{X}} p_{F,\Sigma}(x^{(-m)} \mid x^{(m)}) \leq \frac{1}{(2 \pi \lambda_{1}(\Sigma))^{d^{(-m)}/2} }.
\end{align*}
Since also, 
\begin{align*}
    \frac{1}{\sigma^2} \leq \lambda_1(\Sigma^{-1}) \leq \lambda_d(\Sigma^{-1}) \leq \frac{1+\sigma^{\beta}}{\sigma^2},
\end{align*}
we have that, for $m\neq 1$,
\begin{align*}
   \sup_{x \in \R^d} p_{F,\Sigma}(x^{(-m)} \mid x^{(m)})\leq \frac{1}{(2 \pi \lambda_{1}(\Sigma))^{d^{(-m)}/2} } \lesssim \left(\frac{1+\sigma^{\beta}}{\sigma^2}\right)^{d^{(-m)}/2} \leq \frac{2}{\sigma^{d-1}},
\end{align*}
for $\sigma < 1$ small.

Then, from \eqref{eq_priorboundH2} in \Cref{Lem_Priorboundlemmas}, for $\tilde{\varepsilon}<0.4$,
\begin{align}\label{eq_Lemmapp}
    &\E_{(X,M) \sim \Pjoint}\left[ \log \left( \frac{p_{\theta^*}^{(M)}(X^{(M)})}{p_{F,\Sigma}^{(M)}(X^{(M)})} \right)^2 \right] \nonumber \\
   &\leq \HeMAR^2(p_{\theta^*},p_{F,\Sigma})(12 + 2\logm^2(\tilde{\varepsilon})) \nonumber \\ 
    &+ 8\E_{(X,M)\sim \Pjoint}\!\left[\log\left(\frac{p_{\theta^*}^{(M)}(X^{(M)})}{p_{F,\Sigma}^{(M)}(X^{(M)})}\right)^2\mathbf{1}\left \{\frac{p_{F,\Sigma}^{(M)}(X^{(M)})}{p_{\theta^*}^{(M)}(X^{(M)})} \leq \tilde{\varepsilon} \right \}\right].
\end{align}
We choose
\begin{equation}\label{eq_tildeeps}
    \tilde{\varepsilon}=\frac{e^{-1} \varepsilon^4}{\max_m \|p^{(m)}_{\theta^*}\|_{\infty} \sigma},
\end{equation}
for $\varepsilon$ small enough. If, $\He^2(p_{\theta^*}, p_{F,\Sigma}) \lesssim \sigma^{2\beta} + \varepsilon^2$, this leads to the first part of the above to be bounded as:
\begin{align}\label{First_bound}
    \HeMAR^2(p_{\theta^*},p_{F,\Sigma})(12 + 2\logm^2\tilde{\varepsilon}) \lesssim (\sigma^{2\beta} + \varepsilon^2) \logm^2\left(\frac{\varepsilon^4}{\sigma}\right).
\end{align}
Since, for all $m$
\begin{align*}
    \frac{p_{\theta^*}^{(m)}(x^{(m)})}{p_{F,\Sigma}^{(m)}(x^{(m)})} = \frac{p_{m,\theta^*,F,\Sigma}(x)}{p_{F,\Sigma}(x)},
\end{align*}
we note that from \eqref{eq_lowerbound_1}, 
\begin{align*}
  \|x \|_{\infty} \leq a_{\sigma} \implies  \frac{p_{\theta^*}^{(m)}(x^{(m)})}{p_{F,\Sigma}^{(m)}(x^{(m)})} \leq \frac{\sigma^d p_{m,\theta^*,F,\Sigma}(x)}{e^{-1} \varepsilon^4}\leq \frac{\sigma \max_m \|p^{(m)}\|_{\infty}}{e^{-1} \varepsilon^4}.
\end{align*}
In reverse, this means that
\begin{align*}
    \frac{p_{\theta^*}^{(m)}(x^{(m)})}{p_{F,\Sigma}^{(m)}(x^{(m)})}  > \tilde{\varepsilon}^{-1} =\frac{\sigma \max_m \|p^{(m)}\|_{\infty}}{e^{-1} \varepsilon^4} \implies \|x \|_{\infty} > a_{\sigma} \implies \log\left(\frac{1}{p_{F,\Sigma}(x)}\right) \lesssim  \logm(\sigma) +\frac{\|x\|^2}{\sigma^2},
\end{align*}
from \eqref{eq_lowerbound_2}. Moreover, we have for each $m\in \{0,1\}^d\setminus\{1\}$ and $x$ such that
\[
\frac{p_{m,\theta^*,F,\Sigma}(x)}{p_{F,\Sigma}(x)}=\frac{p_{\theta^*}^{(m)}(x^{(m)})}{p_{F,\Sigma}^{(m)}(x^{(m)})} > \tilde{\varepsilon}^{-1},
\]
that
\begin{align*}
    \log\left(\frac{p_{m,\theta^*,F,\Sigma}(x)}{p_{F,\Sigma}(x)}\right)^2 &= \log(p_{m,\theta^*,F,\Sigma}(x))^2+\log\left( \frac{1}{p_{F,\Sigma}(x)}\right)^2 +\log(p_{m,\theta^*,F,\Sigma}(x))\log\left( \frac{1}{p_{F,\Sigma}(x)}\right)\\
    &=\log\left( \frac{1}{p_{F,\Sigma}(x)}\right)^2 \left( \frac{\log(p_{m,\theta^*,F,\Sigma}(x))}{\log\left( \frac{1}{p_{F,\Sigma}(x)}\right)} + 1\right)^2\\
    &\lesssim \left(\logm(\sigma) +\frac{\|x\|^2}{\sigma^2}\right)^2 \left( \frac{\log(2\max_{m}\|p_{\theta^*}^{(m)} \|_{\infty}) + (d-1)\logm(\sigma)}{\logm(\sigma) +\frac{\|x\|^2}{\sigma^2}} + 1\right)^2\\
    &\lesssim \left(\logm(\sigma) +\frac{\|x\|^2}{\sigma^2}\right)^2,
\end{align*}
where the last step followed because, for $\sigma < 1 $ small,
\begin{align*}
    \frac{\log(2\max_{m}\|p_{\theta^*}^{(m)} \|_{\infty}) + (d-1)\logm(\sigma)}{\logm(\sigma) +\frac{\|x\|^2}{\sigma^2}} \leq C,
\end{align*}
for some constant $C$. Thus, for the second part,
\begin{align*}
    &\E_{(X,M)\sim \Pjoint}\!\left[\log\left(\frac{p_{\theta^*}^{(M)}(X^{(M)})}{p_{F,\Sigma}^{(M)}(X^{(M)})}\right)^2\mathbf{1}\left \{\frac{p_{F,\Sigma}^{(M)}(X^{(M)})}{p_{\theta^*}^{(M)}(X^{(M)})} \leq \tilde{\varepsilon} \right \}\right]\\
    &=\sum_{m} \int \log\left(\frac{p_{m,\theta^*,F,\Sigma}(x)}{p_{F,\Sigma}(x)}\right)^2\mathbf{1}\left \{\frac{ p_{m,\theta^*,F,\Sigma}(x)}{p_{F,\Sigma}(x)} \geq \tilde{\varepsilon}^{-1} \right \}p_{\theta^*}(x) \Prob(M=m\mid x) \mathrm{d}x\\
    &\lesssim  \int_{\|x \|_{\infty} > a_{\sigma}} \left(\logm(\sigma)^2  +  \frac{\|x\|^4}{\sigma^4} + 2 \logm(\sigma) \frac{\|x\|^2}{\sigma^2}\right)p_{\theta^*}(x)  \mathrm{d}x,
\end{align*}
again using the fact that $\sum_{m} \Prob(M=m \mid x )=1$. Now by the tail assumption on $p_{\theta^*}$, the fact that $\|x \| \geq \|x \|_{\infty}$ and the choice of $a_{\sigma}=a_0 \logm(\sigma)^{1/\tau}$, for large enough $a_0$, 
\begin{align*}
     &\leq \int_{\|x \| > a_{\sigma}} \left(\logm(\sigma)^2  +  \frac{\|x\|^4}{\sigma^4} + 2 \logm(\sigma) \frac{\|x\|^2}{\sigma^2}\right)c e^{-b \|x\|^{\tau}}  \mathrm{d}x\\
          &= \frac{e^{-b a_{\sigma}^{\tau}}}{\sigma^4}\int_{\|x \| > a_{\sigma}} \left(\sigma^4\logm(\sigma)^2  +  \|x\|^4 + 2 \logm(\sigma) \sigma^2\|x\|^2\right)c e^{-b (\|x\|^{\tau} -a_{\sigma}^{\tau})}  \mathrm{d}x \\
          &\lesssim \frac{e^{-b a_{\sigma}^{\tau}}}{\sigma^4}\\
          &=\sigma^{b \alpha_0-4},
\end{align*}
which is strictly smaller than $\sigma^{2\beta}$ for $\alpha_0$ large. Combining \eqref{First_bound} with the above in \eqref{eq_Lemmapp}, we obtain:
\begin{align}\label{eq_Lemmapp2}
    &\E_{(X,M) \sim \Pjoint}\left[ \log \left( \frac{p_{\theta^*}^{(M)}(X^{(M)})}{p_{F,\Sigma}^{(M)}(X^{(M)})} \right)^2 \right] \nonumber \\
   &\lesssim (\sigma^{2\beta} + \varepsilon^2) \logm^2\left(\frac{\varepsilon^4}{\sigma}\right).
\end{align}
Similar arguments for $\E_{(X,M) \sim \Pjoint}\left[ \log \left( \frac{p_{\theta^*}^{(M)}(X^{(M)})}{p_{F,\Sigma}^{(M)}(X^{(M)})} \right) \right]$, using \eqref{eq_priorboundH1} in \Cref{Lem_Priorboundlemmas}, lead to the same bound up to constants. Thus, we have that
\begin{align*}
    &B_{\varepsilon, M, \sigma, \beta} \\
&\subset \left\{(F, \Sigma): \He^2(p_{\theta^*}, p_{F,\Sigma}) \lesssim \sigma^{2\beta} + \varepsilon^2 \right\}\\
& \subset \Big\{ \theta \in \Theta : \mathbb{E}_{(X,M)\sim\mathbb{P}_{\theta^*}} \left[  \log \frac{p_{\theta^*}^{(M)}\left(X^{(M)}\right)}{p_{\theta}^{(M)}\left(X^{(M)}\right)}\right] \lesssim (\sigma^{2\beta} + \varepsilon^2) \logm^2\left(\frac{\varepsilon^4}{\sigma}\right)\\
&\mathbb{E}_{(X,M)\sim\mathbb{P}_{\theta^*}} \left[ \left( \log \frac{p_{\theta^*}^{(M)}\left(X^{(M)}\right)}{p_{\theta}^{(M)}\left(X^{(M)}\right)} \right)^2 \right]  \lesssim (\sigma^{2\beta} + \varepsilon^2) \logm^2\left(\frac{\varepsilon^4}{\sigma}\right) \Big\}
\end{align*}
Combining this with the prior probability bound for $B_{\varepsilon, M, \sigma, \beta}$ in $\eqref{eq_Blowerbound}$ and the fact that $\tilde{\varepsilon} < 0.4$ (with $\tilde{\varepsilon}$ defined in Equation \ref{eq_tildeeps}), we see that we need to meet,
\begin{align}\label{eq_Cond1}
\frac{\varepsilon^4}{\sigma} < 0.4 e \max_m \|p^{(m)}_{\theta^*}\|_{\infty} 
\end{align}
\begin{align}\label{eq_Cond2}
    (\sigma^{2\beta} + \varepsilon^2) \logm^2\left(\frac{\varepsilon^4}{\sigma}\right) \lesssim \varepsilon_n^2
\end{align}
\begin{align}\label{eq_Cond3}
    (\logm(\sigma))^{d/\tau} \sigma^{-d} (\logm(\varepsilon))^{d+1} + \sigma^{-2} \lesssim n \varepsilon_n^2
\end{align}
The goal is to find $\varepsilon, \sigma$ that meet these conditions with 
$$\varepsilon_n \sim \bar{\varepsilon}_n= n^{-\beta/(2\beta + d)} (\log(n))^{(\beta d + \beta d/\tau + d + \beta)/(2\beta +d)},$$ 
as in the statement of the theorem (and as in \cite[Propositon 9.14 ii)]{Fundamentals}). We choose $\varepsilon^2 \asymp \min(\sigma^d, \sigma^{2\beta})$\footnote{On page 249 of \cite{Fundamentals} they choose $\varepsilon^4 \asymp \min(\sigma^d, \sigma^{2\beta})$, which we believe to be a typo.} and $\sigma^{2\beta + d} \asymp n^{-1} \log(n)^{d/\tau + d -1}$. This first ensures that $\varepsilon^4/\sigma$ will be much smaller than 1, ensuring \eqref{eq_Cond1} for large enough $n$. Moreover, 
\begin{align}\label{eq_secondconditioncheck}
    (\sigma^{2\beta} + \varepsilon^2) \logm^2\left(\frac{\varepsilon^4}{\sigma}\right)\leq  2\sigma^{2\beta}  (2d-1)^2 \logm^2\left(\sigma\right) \lesssim \sigma^{2\beta}\logm^2(\sigma) ,
\end{align}
and $\sigma^{2\beta}\logm^2(\sigma) \sim \bar{\varepsilon}_n^2$. Indeed as 
\[
\sigma^{2\beta}=(\sigma^{2\beta + d})^{2 \beta/(2\beta + d)} \asymp n^{-2 \beta/(2\beta + d)} (\log(n))^{(d/\tau + d -1) \cdot 2 \beta/(2\beta + d)}
\]
and
\[
\logm^2(\sigma)=\frac{\logm^2(\sigma^{2\beta + 1})}{2\beta + 1}\lesssim \logm^2(n^{-1} \log(n)^{d/\tau + d -1}) \sim \logm^{2}(n^{-1})=(\log(n))^2,
\]
we arrive at $\sigma^{2\beta}\logm^2(\sigma)  \sim n^{-2 \beta/(2\beta + d)} (\log(n))^{t_0} $, with
\begin{align*}
t_0&=\frac{2\beta(d/\tau+d-1)}{2\beta+d} + 2 
= \frac{2(\beta d/\tau + \beta d + \beta + d)}{2\beta+d}.
\end{align*}
Finally, to verify \eqref{eq_Cond3}, we first note that
\begin{align*}
     (\logm(\sigma))^{d/\tau} \sigma^{-d} (\logm(\varepsilon))^{d+1} + \sigma^{-2} &\lesssim (\logm(\sigma))^{d/\tau+d+1} \sigma^{-d} + \sigma^{-2}.
\end{align*}
Since $d \geq 2$, $\sigma^{-2}$ will be negligible compared to the first term, and we may focus on $(\logm(\sigma))^{d/\tau+d+1} \sigma^{-d}$. Then with the same tricks as before:
\begin{align*}
   (\logm(\sigma))^{d/\tau+d+1} \sigma^{-d}& \lesssim (\logm(n^{-1} \log(n)^{d/\tau + d -1}))^{d/\tau + d -1} n^{d/(2\beta + d)} (\log(n))^{-(d/\tau + d -1) \cdot d/(2\beta + d)}\\
    &\sim (\log(n))^{d/\tau + d -1} (\log(n))^{-(d/\tau + d -1) \cdot d/(2\beta + d)} n^{d/(2\beta + d)}.
\end{align*}
For the exponent of $n$, we have $n^{d/(2\beta + d)}= n n^{d/(2\beta + d)-1}=n^{d-2\beta /(2\beta + d)}$, while the power of $\log(n)$ is
\begin{align*}
d/\tau+d+1 - \frac{(d/\tau+d-1)d}{2\beta+d} 
&= \frac{(d/\tau+d+1)(2\beta+d) - (d/\tau+d-1)d}{2\beta+d}\\
&= \frac{2\beta d/\tau + 2\beta d + 2\beta + 2d}{2\beta+d}\\
&= \frac{2(\beta d/\tau + \beta d + \beta + d)}{2\beta+d},
\end{align*}
recovering $n \bar{\varepsilon}_n^2$ as desired. We thus recover the prior mass assumption with exactly the same rate as in Proposition 19.14 ii) in \cite{Fundamentals}. Combining this with Lemma 9.15 (Entropy), the proof now proceeds in exactly the same way as in Section 9.4.4 of \cite[Chapter 9]{Fundamentals}. Adding to this Lemma 9.16 for the inverse-Wishart distribution explains the factor $\kappa=2$.

\end{proof}

\MissingResponseRates*

\begin{proof}
    The only challenge here is to show that Assumption \ref{asm_prior2plus} is satisfied, in which case the theorem would be a direct application of Proposition \ref{Cor:GeneralResultH}.

    Note that parts (ii) and (iii) of Assumption \ref{asm_prior2plus} are already explicitly assumed here, so the challenge lies in showing part (i), that is:
    $\Pi\left(\mathcal{B}(\theta^*,\bar{\varepsilon}_n; \mathbb{P}_{\theta^*})\right) \geq e^{-\CI n \bar{\varepsilon}_n^2}$,
    for small enough $\bar{\varepsilon}_n^2$ and
    \[ \mathcal{B}(\theta^*,\varepsilon; \mathbb{P}_{\theta^*}) = \left\{ \theta \in \Theta : \KLMAR(P_{\theta^*}\| P_{\theta} )\leq \varepsilon^2 \, , \, \KLMARsq(P_{\theta^*}\| P_{\theta})\leq \varepsilon^2 \right\}, \]
    for \(\varepsilon > 0\).

    We know that $\KLMAR(P_{\theta^*}\| P_{\theta})\leq \KL(P_{\theta^*}\| P_{\theta})$, and we can further show that
        \begin{align*}
        \KLMARsq&(P_{\theta^*}\| P_{\theta})=\E_{(X,Y,M)\sim\mathbb{P}_{\theta^*}}\left[ \log^2 \left( \frac{p_{\theta^*}^{(M)}(X,Y^{(M)})}{p_{\theta}^{(M)}(X,Y^{(M)})} \right) \right]\\
        &=\E_{(X,Y,M)\sim\mathbb{P}_{\theta^*}}\left[ \log^2 \left( \frac{p_{\theta^*}^{(M)}(X,Y^{(M)})}{p_{\theta}^{(M)}(X,Y^{(M)})} \right) \mathbbm{1}\left(M=0\right) + \log \left( \frac{p_{\theta^*}^{(M)}(X,Y^{(M)})}{p_{\theta}^{(M)}(X,Y^{(M)})} \right) \mathbbm{1}\left(M=1\right) \right] \\
        &=\E_{(X,Y,M)\sim\mathbb{P}_{\theta^*}}\left[ \log^2 \left( \frac{p_{\theta^*}(X,Y)}{p_{\theta}(X,Y)} \right) \mathbbm{1}\left(M=0\right) + \log \left(\frac{p_X(X)}{p_X(X)} \right) \mathbbm{1}\left(M=1\right) \right] \\
        &=\E_{(X,Y,M)\sim\mathbb{P}_{\theta^*}}\left[ \log^2 \left( \frac{p_{\theta^*}(X,Y)}{p_{\theta}(X,Y)} \right) \mathbbm{1}\left(M=0\right) \right] \\
        &\leq \E_{(X,Y,M)\sim\mathbb{P}_{\theta^*}}\left[ \log^2 \left( \frac{p_{\theta^*}(X,Y)}{p_{\theta}(X,Y)} \right) \right] \\
        &=\mathbb{E}_{P_{\theta^*}} \left[ \left( \log \frac{p_{\theta^*}}{p_{\theta}} \right)^2 \right] \, 
    \end{align*}
    by denoting $p_\theta$ the joint modeled density over $(X,Y)$ and $p_X$ the (fixed and known) marginal density of $X$.

    Hence, we have $\mathcal{B}(\theta^*,\bar{\varepsilon}_n)\subset \mathcal{B}(\theta^*,\bar{\varepsilon}_n; \mathbb{P}_{\theta^*})$, and then $\pi(\mathcal{B}(\theta^*,\bar{\varepsilon}_n; \mathbb{P}_{\theta^*}))\geq \pi(\mathcal{B}(\theta^*,\bar{\varepsilon}_n))$, which ends the proof.
\end{proof}

\MissingCovariatesRates*

\begin{proof}
    As in the proof above, the main challenge here lies in showing that
    $\Pi\left(\mathcal{B}(\theta^*,\bar{\varepsilon}_n; \mathbb{P}_{\theta^*})\right) \geq e^{-\CI n \bar{\varepsilon}_n^2}$,
    for small enough $\bar{\varepsilon}_n^2$ and
    \[ \mathcal{B}(\theta^*,\varepsilon; \mathbb{P}_{\theta^*}) = \left\{ \theta \in \Theta : \KLMAR(P_{\theta^*}\| P_{\theta} )\leq \varepsilon^2 \, , \, \KLMARsq(P_{\theta^*}\| P_{\theta})\leq \varepsilon^2 \right\}, \]
    for \(\varepsilon > 0\).

    To do so, we simply have to show that both $\KLMAR(P_{\theta^*}\| P_{\theta})$ and $\KLMARsq(P_{\theta^*}\| P_{\theta})$ are upper bounded by $\|f_\theta-f_{\theta^\star}\|_\infty$ (up to a factor), which will end the proof.

    We start by observing that the ratio of two $\sigma$-variance-Gaussian densities $\phi_\sigma$ satisfies for any \(x,y,\theta\):
    \[ 
    \frac{\phi_\sigma(y-f_\theta(x))}{\phi_\sigma(y-f_{\theta^\star}(x))} = \exp\left( \frac{(f_\theta-f_{\theta^\star})(x)}{\sigma^2} \left(y-\frac{f_\theta(x)+f_{\theta^\star}(x)}{2}\right) \right) \leq \exp\left( \frac{\|f_\theta-f_{\theta^\star}\|_\infty}{\sigma^2} \left(|y|+B\right) \right) \, .
    \]
    Consequently, we have
    \[ 
    \phi_\sigma(y-f_\theta(x)) \leq \exp\left( \frac{\|f_\theta-f_{\theta^\star}\|_\infty}{\sigma^2} \left(|y|+B\right) \right) \, \phi_\sigma(y-f_{\theta^\star}(x)) \, .
    \]    
    Then, the observed-data likelihood can be controlled as follows:
    \begin{align*}
    p_\theta^{(m)}\left(x^{(m)},y\right) &= \int_{x^{(-m)}} p_\theta(x,y) \,\mathrm{d}x^{(-m)} \\
    &= \int_{x^{(-m)}} p_X(x) \, \phi_\sigma(y-f_\theta(x)) \,\mathrm{d}x^{(-m)} \\
    &\leq \int_{x^{(-m)}} p_X(x)\, \exp\left( \frac{\|f_\theta-f_{\theta^\star}\|_\infty}{\sigma^2} \left(|y|+B\right) \right) \, \phi_\sigma(y-f_{\theta^\star}(x)) \,\mathrm{d}x^{(-m)} \\
    &= \exp\left( \frac{\|f_\theta-f_{\theta^\star}\|_\infty}{\sigma^2} \left(|y|+B\right) \right)\int_{x^{(-m)}} p_X(x) \, \phi_\sigma(y-f_{\theta^\star}(x)) \,\mathrm{d}x^{(-m)} \\
    &= \exp\left( \frac{\|f_\theta-f_{\theta^\star}\|_\infty}{\sigma^2} \left(|y|+B\right) \right) p_{\theta^\star}^{(m)}\left(x^{(m)},y\right) \, .
    \end{align*}
    Hence, we have the following bound independent of $x$ and $m$,
    $$
    \log\left(\frac{p_{\theta^\star}^{(m)}\left(x^{(m)},y\right)}{p_\theta^{(m)}\left(x^{(m)},y\right)}\right) \leq \frac{\|f_\theta-f_{\theta^\star}\|_\infty}{\sigma^2} \left(|y|+B\right) \, ,
    $$
    and we have both
    \begin{align*}
        \KLMAR(P_{\theta^*}\| P_{\theta})&=\E_{(X,Y,M)\sim\mathbb{P}_{\theta^*}}\left[ \log\left( \frac{p_{\theta^*}^{(M)}(X^{(M)},Y)}{p_{\theta}^{(M)}(X^{(M)},Y)} \right) \right]\\
        &\leq\E_{(X,Y,M)\sim\mathbb{P}_{\theta^*}}\left[ \frac{\|f_\theta-f_{\theta^\star}\|_\infty}{\sigma^2} \left(|Y|+B\right) \right] \\
        &=\frac{\|f_\theta-f_{\theta^\star}\|_\infty}{\sigma^2} \left(\E\left[|Y|\right]+B\right) \\
        &\leq\frac{2B+\sigma}{\sigma^2} \cdot \|f_\theta-f_{\theta^\star}\|_\infty \,
    \end{align*}
    and
    \begin{align*}
        \KLMARsq(P_{\theta^*}\| P_{\theta})&=\E_{(X,Y,M)\sim\mathbb{P}_{\theta^*}}\left[ \log^2\left( \frac{p_{\theta^*}^{(M)}(X^{(M)},Y)}{p_{\theta}^{(M)}(X^{(M)},Y)} \right) \right]\\
        &\leq\E_{(X,Y,M)\sim\mathbb{P}_{\theta^*}}\left[ \frac{\|f_\theta-f_{\theta^\star}\|^2_\infty}{\sigma^4} \left(2|Y|^2+2B^2\right) \right] \\
        &=\frac{4B^2+2\sigma^2}{\sigma^4} \cdot \|f_\theta-f_{\theta^\star}\|^2_\infty \\
        &\leq\frac{4B^2+2\sigma^2}{\sigma^4} \cdot \|f_\theta-f_{\theta^\star}\|_\infty \, ,
    \end{align*}
    the last inequality holding as soon as $\|f_\theta-f_{\theta^\star}\|_\infty\leq1$, which holds under the considered neighborhood sizes.
\end{proof}

\section{Additional Results}

We first recall that
\[ \mathcal{B}(\theta^*,\varepsilon; \mathbb{P}_{\theta^*}) = \left\{ \theta \in \Theta : \mathbb{E}_{(X,M)\sim\mathbb{P}_{\theta^*}} \left[  \log \frac{p_{\theta^*}^{(M)}\left(X^{(M)}\right)}{p_{\theta}^{(M)}\left(X^{(M)}\right)}\right] \leq \varepsilon^2 \, , \, \mathbb{E}_{(X,M)\sim\mathbb{P}_{\theta^*}} \left[  \log \left(\frac{p_{\theta^*}^{(M)}\left(X^{(M)}\right)}{p_{\theta}^{(M)}\left(X^{(M)}\right)} \right)^2 \right]  \leq \varepsilon^2 \right\}, \]
for \(\varepsilon > 0\).

\begin{lemma}\label{Omegaanbound}
    Let for arbitrary $C, r > 0$,
    \begin{align*}
    \Omega_n(r) = \left\{ \int \prod_{i=1}^n \frac{p_{\theta}^{(M_i)}\left(X_i^{(M_i)}\right)}{p_{\theta^*}^{(M_i)}\left(X_i^{(M_i)}\right)} d\Pi(\theta) \geq \Pi(\mathcal{B}(\theta^*,r; \mathbb{P}_{\theta^*})) e^{-n r^2 (1+C)} \right\} .
\end{align*}
Then
\begin{align}
    \mathbb{P}_{\theta^*}(\Omega_n(r)^c) \leq  \frac{1}{C^2 n r^2}.
\end{align}
\end{lemma}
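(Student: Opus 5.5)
This is the standard evidence lower bound (cf.\ Lemma 8.10 in \cite{Fundamentals}). The plan is to restrict the prior to the ball, apply Jensen, and finish with Chebyshev. Write $B:=\mathcal{B}(\theta^*,r;\mathbb{P}_{\theta^*})$ and assume $\Pi(B)>0$; otherwise the event $\Omega_n(r)$ holds trivially. Let $\bar\Pi=\Pi(\cdot\cap B)/\Pi(B)$ be the renormalized prior on $B$. Since the integrand is nonnegative,
\[
\int \prod_{i=1}^n \frac{p_{\theta}^{(M_i)}(X_i^{(M_i)})}{p_{\theta^*}^{(M_i)}(X_i^{(M_i)})}\, d\Pi(\theta) \;\geq\; \Pi(B)\int \prod_{i=1}^n \frac{p_{\theta}^{(M_i)}(X_i^{(M_i)})}{p_{\theta^*}^{(M_i)}(X_i^{(M_i)})}\, d\bar\Pi(\theta).
\]
Applying Jensen to the concave logarithm then gives
\[
\log \int \prod_i \frac{p_{\theta}^{(M_i)}}{p_{\theta^*}^{(M_i)}}\, d\bar\Pi \;\geq\; \sum_{i=1}^n Z_i, \qquad Z_i:=\int \log\frac{p_{\theta}^{(M_i)}(X_i^{(M_i)})}{p_{\theta^*}^{(M_i)}(X_i^{(M_i)})}\, d\bar\Pi(\theta).
\]
It therefore suffices to show $\mathbb{P}_{\theta^*}\bigl(\sum_i Z_i < -n r^2(1+C)\bigr)\le 1/(C^2nr^2)$.

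The $Z_i$ are i.i.d.\ functions of the observed pairs $(X_i^{(M_i)},M_i)$ under $\mathbb{P}_{\theta^*}$. By Fubini, $\E_{\mathbb{P}_{\theta^*}}[Z_1] = -\int \KLMAR(P_{\theta^*}\|P_\theta)\, d\bar\Pi(\theta) \ge -r^2$, because every $\theta\in B$ satisfies $\KLMAR\le r^2$. For the variance, Jensen (Cauchy--Schwarz) in $\bar\Pi$ followed by Fubini gives
\[
\mathrm{Var}(Z_1)\le \E[Z_1^2]\le \int \KLMARsq(P_{\theta^*}\|P_\theta)\, d\bar\Pi(\theta)\le r^2 .
\]
On the event $\sum_i Z_i<-nr^2(1+C)$ we have $\sum_i (Z_i-\E Z_i) < -nr^2(1+C)+nr^2 = -Cnr^2$. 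Chebyshev bounds the probability of this by $n r^2/(Cnr^2)^2 = 1/(C^2 n r^2)$. On the complement, $\int\prod_i(\cdot)\,d\Pi \ge \Pi(B)e^{-nr^2(1+C)}$, which is exactly $\Omega_n(r)$.

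The main technical point is justifying Fubini, i.e.\ measurability of $(\theta,x,m)\mapsto \log p_\theta^{(m)}(x^{(m)})$ and integrability. Measurability follows from the joint measurability of $(\theta,x)\mapsto p_\theta(x)$ together with Tonelli, applied to the marginalization. For integrability, $\KLMARsq\le r^2$ on $B$ makes the log-ratio square-integrable, hence integrable, under $\mathbb{P}_{\theta^*}\otimes\bar\Pi$. This also shows that the $Z_i$ are well defined (possibly $-\infty$ only on a null set). Nonnegativity of $\KLMAR$ (\Cref{definition_KL}) is not even needed here; only the upper bounds defining $B$ are used. Note that the proof of \Cref{Thm:GeneralResult} invokes this lemma with $C=1$ and $r=\bar\varepsilon_n$, giving a bound of $1/(n\bar\varepsilon_n^2)\to 0$.
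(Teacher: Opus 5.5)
Your proof is correct and follows the paper's route: restrict and renormalize the prior on $\mathcal{B}(\theta^*,r;\mathbb{P}_{\theta^*})$, apply Jensen to the logarithm, and finish with Chebyshev, using the $\KLMARsq$ bound on the ball to control the variance. Your mean step integrates $\KLMAR$ against the renormalized prior, and your variance step bounds the variance directly by the second moment; both are cleaner versions of the paper's somewhat loosely written steps (a fixed-$\theta$ expression for $\E Z$ and a sign slip in the centering).
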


\begin{proof}
We largely follow the proof in Lemma 8.10 \cite{Fundamentals}. First, taking $B=\mathcal{B}(\theta^*,r; \mathbb{P}_{\theta^*})$, we notice that the inequality in $\Omega_n(r)$ is always true if $\Pi(B)=0$. Thus, we may assume $\Pi(B) > 0$. Similarly, since
    \begin{align*}
     \int \prod_{i=1}^n \frac{p_{\theta}^{(M_i)}\left(X_i^{(M_i)}\right)}{p_{\theta^*}^{(M_i)}\left(X_i^{(M_i)}\right)} d\Pi(\theta) \geq \int_B \prod_{i=1}^n \frac{p_{\theta}^{(M_i)}\left(X_i^{(M_i)}\right)}{p_{\theta^*}^{(M_i)}\left(X_i^{(M_i)}\right)} d\Pi(\theta)
    \end{align*}
and since we can divide both sides in the inequality by $\Pi(B)$, we can without loss of generality assume that $\Pi(B)=1$. We now apply Jensen's inequality:
\begin{align*}
    \log \left(  \int \prod_{i=1}^n \frac{p_{\theta}^{(M_i)}\left(X_i^{(M_i)}\right)}{p_{\theta^*}^{(M_i)}\left(X_i^{(M_i)}\right)} d\Pi(\theta)\right) \geq \sum_{i=1}^n \int   \log \left(\frac{p_{\theta}^{(M_i)}\left(X_i^{(M_i)}\right)}{p_{\theta^*}^{(M_i)}\left(X_i^{(M_i)}\right)}\right) d\Pi(\theta):=Z(\data).
\end{align*}
Then, using a change of integral, it holds that,
\[
\E_{\data \sim \Pjointn}[Z(\data)]=n \E_{(X,M)\sim \Pjoint}\left[  \log \left(\frac{p_{\theta}^{(M_i)}\left(X_i^{(M_i)}\right)}{p_{\theta^*}^{(M_i)}\left(X_i^{(M_i)}\right)}\right) \right]=-n\KLMAR(P_{\theta^*} \| P_{\theta}) > -n r^2.
\]
Thus,
\begin{align}\label{eq_Zprobbound}
    \mathbb{P}_{\theta^*}(Z(\data) < \log(e^{-n r^2 (1+C)}))&= \mathbb{P}_{\theta^*}(Z(\data)  < -n r^2 (1+C))\nonumber\\
    &\leq \mathbb{P}_{\theta^*}(Z(\data) < \E_{\data \sim \Pjointn}[Z(\data)]-n r^2C)\nonumber\\
    &\leq \mathbb{P}_{\theta^*}(|Z(\data)-\E_{\data \sim \Pjointn}[Z(\data)]| > n r^2C) \nonumber \\
     &\leq \E_{\data \sim \Pjointn} \left[ \frac{|Z(\data)-\E_{\data \sim \Pjointn}[Z(\data)]|^2}{n^2 r^4 C^2} \right]
\end{align}
where we used Markov's inequality in the last step. Now using Jensen
\begin{align}\label{eq_Zexpbound}
    &\E_{\data \sim \Pjointn} \left[ |Z(\data)-\E_{\data \sim \Pjointn}[Z(\data)]|^2\right] \nonumber\\
    &\leq \sum_{i=1}^{n} \E_{\data \sim \Pjointn} \left[\int \left(\log \left(\frac{p_{\theta}^{(M_i)}\left(X_i^{(M_i)}\right)}{p_{\theta^*}^{(M_i)}\left(X_i^{(M_i)}\right)}\right) - \KLMAR(P_{\theta^*} \| P_{\theta})\right)^2 d\Pi(\theta) \right]\nonumber\\
    &=n\int \E_{(X,M) \sim \Pjoint} \left[\left(\log \left(\frac{p_{\theta}^{(M)}\left(X^{(M)}\right)}{p_{\theta^*}^{(M)}\left(X^{(M)}\right)}\right) - \KLMAR(P_{\theta^*} \| P_{\theta})\right)^2 \right]d\Pi(\theta) \nonumber\\
    & \leq n\int \E_{(X,M) \sim \Pjoint} \left[\left(\log \left(\frac{p_{\theta}^{(M)}\left(X^{(M)}\right)}{p_{\theta^*}^{(M)}\left(X^{(M)}\right)}\right)\right)^2 \right]d\Pi(\theta)\nonumber\\
        & \leq n r^2. 
\end{align}
Combining \eqref{eq_Zprobbound} and \eqref{eq_Zexpbound} with the fact that, by the definition of $Z(\data)$,
\begin{align*}
    \mathbb{P}_{\theta^*}(\Omega_n(r)^c) \leq  \mathbb{P}_{\theta^*}(Z(\data) < \log(e^{-n r^2 (1+C)})),
\end{align*}
gives the result.
\end{proof}

Recall the definition of $\HeMAR$ in the main text:
\begin{align*}
    \HeMAR^2(p_{\theta_1}, p_{\theta_2})=\sum_{m} \int |\sqrt{p_{\theta_1}^{(m)}}(x^{(m)}) - \sqrt{p_{\theta_2}^{(m)}}(x^{(m)})|^2 \Prob(M=m \mid x^{(m)}) \mathrm{d}x^{(m)}. 
\end{align*}

We also adapt the Hellinger affinity, $\Af(p_{\theta_1}; p_{\theta_2}) = \int \sqrt{p_{\theta_1}}(x)\sqrt{p_{\theta_2}}(x)\,\mathrm{d}x $, as
\begin{align*}
    \AfMAR(p_{\theta_1}; p_{\theta_2}) = \sum_{m} \int \sqrt{p^{(m)}_{\theta_1}}(x^{(m)})\sqrt{p^{(m)}_{\theta_2}}(x^{(m)}) \Prob(M=m \mid x^{(m)})\,\mathrm{d}x^{(m)}
\end{align*}

We then have that:
\begin{align*}
     &\HeMAR^2(p_{\theta_1}, p_{\theta_2})\\
     &=\sum_{m} \int |\sqrt{p_{\theta_1}^{(m)}}(x^{(m)}) - \sqrt{p_{\theta_2}^{(m)}}(x^{(m)})|^2 \Prob(M=m \mid x^{(m)}) \mathrm{d}x^{(m)}\\
     &=\sum_{m} \Big( \int p_{\theta_1}^{(m)}(x^{(m)}) \Prob(M=m \mid x^{(m)}) \mathrm{d}x^{(m)} + \int p_{\theta_2}^{(m)}(x^{(m)})\Prob(M=m \mid x^{(m)}) \mathrm{d}x^{(m)}\\
     &- 2\int \sqrt{p_{\theta_1}^{(m)}}(x^{(m)})\sqrt{p_{\theta_2}^{(m)}}(x^{(m)}) \Prob(M=m \mid x^{(m)}) \mathrm{d}x^{(m)}\Big)\\
        &=  2- 2 \AfMAR(p_{\theta_1}; p_{\theta_2}).
\end{align*}

In the following, we recall that $\Theta=\mathcal{P}$ is dominated by the Lebesgue measure by assumption.

\begin{thm}\label{amazintheorem}
Assume that Assumptions \ref{asm_true_MDM} and \ref{asm_no_empty_MDM} hold. Moreover, assume that there exists $\theta_{\star} \in \Theta$ such that $x \mapsto p_{\theta_{\star}}(x)$ is continuous and $\supp(p_{\theta}) \subset \supp(p_{\theta_{\star}})$ for all $\theta \in \Theta$. Then given any $p_{\theta_1}$ and $p_{\theta_2}$, there exist probability densities $\bar{p}_{\theta_1}$ and $\bar{p}_{\theta_2}$ such that for any probability density $p_{\theta_3}$,
\begin{align}
\E_{(X,M) \sim \Prob_{\theta_3}}\left[\sqrt{\frac{\bar{p}^{(M)}_{\theta_2}(X^{(M)})}{\bar{p}^{(M)}_{\theta_1}(X^{(M)})}}\right]\leq 1 - \frac{1}{6}\HeMAR^2(p_{\theta_1}, p_{\theta_2}) + \HeMAR^2(p_{\theta_2}, p_{\theta_3}),
\end{align}
and 
\begin{align}
   \E_{(X,M) \sim \Prob_{\theta_3}}\left[\sqrt{\frac{\bar{p}^{(M)}_{\theta_1}(X^{(M)})}{\bar{p}^{(M)}_{\theta_2}(X^{(M)})}}\right]  \leq 1 - \frac{1}{6}\HeMAR^2(p_{\theta_1}, p_{\theta_2}) + \HeMAR^2(p_{\theta_2}, p_{\theta_3})
\end{align}
\end{thm}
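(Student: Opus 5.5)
The plan is to reduce the statement to the classical complete-data lemma behind Le Cam--Birgé tests (cf.\ \cite[Appendix D]{Fundamentals}). That lemma is then applied on the "observed-data space" $\{(m,x^{(m)})\}$ equipped with the weighted reference measure $\mu(\{m\}\times \mathrm{d}x^{(m)}) = \Prob(M=m\mid x^{(m)})\,\mathrm{d}x^{(m)}$.

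\textbf{Step 1 (reduction).} For a density $p$ on $\mathcal X$, set $q_p(m,x^{(m)}) := p^{(m)}(x^{(m)})$. By MAR, exactly as in the proofs of \Cref{definition_KL} and \Cref{Hellingerbounding}, we have $\int q_p\,\mathrm{d}\mu = \sum_m\int p(x)\Prob(M=m\mid x)\,\mathrm{d}x = 1$. So $q_p$ is a probability density with respect to $\mu$. Moreover, for any measurable $g$,
\begin{align*}
\E_{(X,M)\sim\Prob_{\theta_3}}\big[g(M,X^{(M)})\big]=\int g\, q_{p_{\theta_3}}\,\mathrm{d}\mu .
\end{align*}
On this space, $\HeMAR$ is exactly the Hellinger distance between $q_{p_{\theta_1}}$ and $q_{p_{\theta_2}}$, and $\AfMAR$ is their affinity, so $\HeMAR^2=2-2\AfMAR$ as computed above. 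The map $p\mapsto q_p$ is linear, since marginalisation is linear. Hence any convex combination $\bar p$ of $p_{\theta_1},p_{\theta_2}$ is a genuine density on $\mathcal X$ whose observed marginals satisfy $q_{\bar p}=$ the same convex combination of $q_{p_{\theta_1}},q_{p_{\theta_2}}$. This is what makes the "$\bar p$ are densities on $\mathcal X$" requirement satisfiable.

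\textbf{Step 2 (construction).} I will take $\bar p_{\theta_1}$ and $\bar p_{\theta_2}$ to be mixtures of $p_{\theta_1}$ and $p_{\theta_2}$, for instance
\begin{align*}
\bar p_{\theta_1}=\tfrac{2}{3}p_{\theta_1}+\tfrac13 p_{\theta_2},\qquad \bar p_{\theta_2}=\tfrac13 p_{\theta_1}+\tfrac23 p_{\theta_2}.
\end{align*}
With this choice the ratio $r=\sqrt{q_{\bar p_{\theta_2}}/q_{\bar p_{\theta_1}}}$ is bounded above (by $\sqrt2$) and below (by $1/\sqrt2$) wherever the denominator is positive. The support/continuity hypothesis on $p_{\theta_\star}$ is used to make the ratios well defined $\mu$-a.e. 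If needed, it also lets me mix in a small multiple of $p_{\theta_\star}$ so that no denominator vanishes on the support of any $p_{\theta_3}$. Because the construction is symmetric in $\theta_1,\theta_2$, the second inequality follows from the first by exchanging roles.

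\textbf{Step 3 (the bound).} Writing $q_j=q_{p_{\theta_j}}$ and $\bar q_j = q_{\bar p_{\theta_j}}$, I will split
\begin{align*}
\int r\,q_3\,\mathrm{d}\mu=\int r\,q_2\,\mathrm{d}\mu+\int r(\sqrt{q_3}-\sqrt{q_2})^2\,\mathrm{d}\mu+2\int r\sqrt{q_2}(\sqrt{q_3}-\sqrt{q_2})\,\mathrm{d}\mu .
\end{align*}
For the main term I aim to show $\int r q_2\,\mathrm{d}\mu\le 1-\tfrac16 h^2(q_1,q_2)$. This is a pointwise inequality in the two numbers $q_1(z),q_2(z)$, which I will reduce to an elementary one-variable inequality in $t=\sqrt{q_1/q_2}$ and then integrate. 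The second term is at most $\sup r\cdot\HeMAR^2(p_{\theta_2},p_{\theta_3})$.

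\textbf{Main obstacle.} The cross term is the main difficulty. A naive Cauchy--Schwarz bound makes it first order in $\HeMAR(p_{\theta_2},p_{\theta_3})$, whereas the statement has the squared distance with coefficient $1$. I will first try to write $r\sqrt{q_2}=\sqrt{q_2}+(r-1)\sqrt{q_2}$. The $\sqrt{q_2}$ piece gives $2\int\sqrt{q_2}(\sqrt{q_3}-\sqrt{q_2}) = -h^2(q_2,q_3)$, which is favourable. The $(r-1)$ piece is then handled with $2ab\le \lambda a^2+b^2/\lambda$, which trades a fraction of the $h^2(q_1,q_2)$ gain, via $|r-1|\lesssim|\sqrt{q_1}-\sqrt{q_2}|/\sqrt{q_2}$, against a multiple of $h^2(q_2,q_3)$. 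The mixing weights in Step 2 should then be tuned so that the constants combine to exactly $\tfrac16$ and $1$.
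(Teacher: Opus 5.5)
\textbf{The approach fails as designed in two places.}

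\textbf{First gap: the expansion point in Step~3.} Your Step~1 reduction to densities $q_p$ with respect to $\mu$ is sound, and neater than the paper's lift via $p_{\theta_\star}$. Step~3, however, rests on the main-term bound $\int r\,q_2\,\mathrm{d}\mu\le 1-\tfrac16 h^2(q_1,q_2)$, and this bound is false.

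\begin{itemize}
\item For your weights, $q_2=2\bar q_2-\bar q_1$. Hence $\int r q_2\,\mathrm{d}\mu=2\int \bar q_2^{3/2}\bar q_1^{-1/2}\,\mathrm{d}\mu-\int\sqrt{\bar q_1\bar q_2}\,\mathrm{d}\mu\ge 2-1=1$, by Jensen and Cauchy--Schwarz. With disjoint supports it equals $\sqrt2$, against a target of $2/3$.
\item This cannot be fixed by tuning. The first display of the statement, with remainder $\HeMAR^2(p_{\theta_2},p_{\theta_3})$, is false for every choice of $\bar p$ making the ratio well defined. Put $p_{\theta_3}=p_{\theta_2}$ and $A=\bar p^{(M)}_{\theta_2}(X^{(M)})/\bar p^{(M)}_{\theta_1}(X^{(M)})$. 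The two displays would bound both $\E[A^{1/2}]$ and $\E[A^{-1/2}]$ by $1-\tfrac16\HeMAR^2(p_{\theta_1},p_{\theta_2})$. But Cauchy--Schwarz gives $\E[A^{1/2}]\,\E[A^{-1/2}]\ge 1$.
\item The remainder must be measured from the density whose $\bar p$ sits in the denominator, i.e.\ $\HeMAR^2(p_{\theta_1},p_{\theta_3})$ in the first display. That is the only form \Cref{cor_basictest} actually uses. The second display is then the first with $\theta_1,\theta_2$ exchanged, so your symmetry remark holds only after this correction.
\item You must therefore expand around $q_1$, not $q_2$.
\end{itemize}

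\textbf{Second gap: convex mixtures cannot reach the constants.} Even after the correction, mixtures do not give $\tfrac16$ and $1$, for any weights.

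\begin{itemize}
\item With no missingness, disjoint supports and $p_{\theta_3}=p_{\theta_1}$, your $\tfrac23/\tfrac13$ choice gives $\E_{q_1}\sqrt{\bar q_2/\bar q_1}=1/\sqrt2>2/3=1-\tfrac16 h^2(q_1,q_2)$.
\item Retuning $w$ in $\bar q_1=wq_1+(1-w)q_2$, $\bar q_2=(1-w)q_1+wq_2$ does not help. Take $q_2=q_1\mathbbm{1}_B/q_1(B)$ with $q_1(B^c)=\eta\to0$, and $q_3=q_1(\beta\mathbbm{1}_B+\gamma\mathbbm{1}_{B^c})$ with $\sqrt\gamma=1/(2-\rho)$, $\rho=\sqrt{(1-w)/w}$. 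To first order in $\eta$, the corrected inequality requires $\frac{2w-1}{2}+\frac{1}{2-\rho}\le\frac56$. The left side exceeds $0.93$ for every $w\in(\tfrac12,1)$, and $w\le\tfrac12$ already fails at $q_3=q_1$.
\item The constants come from the construction in Lemma D.2 of \cite{Fundamentals}. There $\sqrt{\bar q_j}$ are $L_2$-normalized combinations $a\sqrt{q_1}+b\sqrt{q_2}$, not mixtures of $q_1,q_2$.
\end{itemize}

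\textbf{The fix.} Use your Step~1 the way the paper does. Apply that lemma to the densities $q_{p_{\theta_j}}$ with respect to $\mu$, and read off $\bar p^{(m)}_{\theta_j}=(a\sqrt{p^{(m)}_{\theta_1}}+b\sqrt{p^{(m)}_{\theta_2}})^2$. The paper reaches the same objects by lifting to $(x,m)$ with $p_{\theta_\star}(x^{(-m)}\mid x^{(m)})$, which then integrates out.

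There is a price, shared by the paper's proof. These $\bar p^{(m)}_{\theta_j}$ are only functions on the observed-data space, normalized jointly in $L_1(\mu)$; they are not marginals of a genuine density on $\mathcal X$. That is all the test in \Cref{cor_basictest} needs. So the requirement you tried to enforce through mixtures should be dropped, not met.
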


\begin{proof}

For arbitrary $p_{\theta_1}, p_{\theta_2}$ we construct a joint densities $q_{\theta_1},q_{\theta_2} $ of $(X,M)$ on $\mathcal{X} \times \{0,1\}^d$: For $j \in \{1,2\}$, and $x \in \mathcal{X}$
\begin{align}
q_{\theta_j}(x,m)=p_{\theta_j}^{(m)}(x^{(m)}) p_{\theta_{\star}}(x^{(-m)} \mid x^{(m)}) \Prob(M=m \mid x).    
\end{align}
Due to the fact that $\supp(p_{\theta_j}) \subset \supp(p_{\theta_{\star}}) $, and that $p_{\theta_{\star}}$ is continuous $p_{\theta_{\star}}(x^{(-m)} \mid x^{(m)})$ is well-defined in the support of $p_{\theta_j}^{(m)}$ and $q_{\theta_j}(x,m)$ is well-defined for almost all $(x,m)$. Moreover, it is a valid density with respect to the product of the Lebesgue and counting measure, as thanks to MAR, 
\begin{align*}
   \sum_m \int  q_{\theta_j}(x,m) \mathrm{d}x
   &=\sum_m \int p_{\theta_j}^{(m)}(x^{(m)}) \int  \Prob(M=m \mid x) p_{\theta_{\star}}(x^{(-m)} \mid x^{(m)}) d x^{(-m)}  \mathrm{d}x^{(m)}\\
      &=\sum_m \int p_{\theta_j}^{(m)}(x^{(m)}) \int  \Prob(M=m \mid x) p_{\theta_{j}}(x^{(-m)} \mid x^{(m)}) d x^{(-m)}  \mathrm{d}x^{(m)}\\
      &=\sum_m \int p_{\theta_j}(x)  \Prob(M=m \mid x) d x\\
      &=1.
\end{align*}
As such, we can use Lemma D.2. of \cite{Fundamentals} to obtain $ \bar{q}_{\theta_j}$ with 
\begin{align}\label{eq_partoflincomb}
    \sqrt{\bar{q}_{\theta_j}} \in \{a \sqrt{q_{\theta_1}} + b \sqrt{q_{\theta_2}} : (a,b) \in \R^2, \|a \sqrt{q_{\theta_1}} + b \sqrt{q_{\theta_2}} \|_{L_2(\nu)}=1 \},
\end{align}
and
\begin{align}\label{eq_naivepplication}
   \E_{(X,M) \sim \Prob_{\theta_3}}\left[\sqrt{\frac{\bar{q}_{\theta_2}(X,M)}{\bar{q}_{\theta_1}(X,M)}}\right]\leq 1 - \frac{1}{6}\He^2(q_{\theta_1}, q_{\theta_2}) + \He^2(q_{\theta_2}, q_{\theta_3}).
\end{align}
Analyzing the right side of \eqref{eq_naivepplication}, we notice that under MAR,
\begin{align*}
    \He^2(q_{\theta_1}, q_{\theta_2})&=\sum_{m} \int |\sqrt{p_{\theta_1}^{(m)}}(x^{(m)}) - \sqrt{p_{\theta_2}^{(m)}}(x^{(m)})|^2 p_{\theta_{\star}}(x^{-(m)} \mid x^{(m)}) \Prob(M=m \mid x)\mathrm{d}x\\
    &=\sum_{m} \int |\sqrt{p_{\theta_1}^{(m)}}(x^{(m)}) - \sqrt{p_{\theta_2}^{(m)}}(x^{(m)})|^2 \Prob(M=m \mid x^{(m)}) \int p_{\theta_{\star}}(x^{-(m)} \mid x^{(m)}) \mathrm{d}x^{(-m)} \mathrm{d}x^{(m)}\\
    &=\sum_{m} \int |\sqrt{p_{\theta_1}^{(m)}}(x^{(m)}) - \sqrt{p_{\theta_2}^{(m)}}(x^{(m)})|^2 \Prob(M=m \mid x^{(m)})  \mathrm{d}x^{(m)}\\
    &=\HeMAR^2(p_{\theta_1}, p_{\theta_2}),
\end{align*}
and analogously, $\He^2(q_{\theta_2}, q_{\theta_3})=\HeMAR^2(p_{\theta_2}, p_{\theta_3})$.

Considering the left hand side of \eqref{eq_naivepplication}, we first note that \eqref{eq_partoflincomb} implies that $\sqrt{\bar{q}_{\theta_j}}$ has the form
\begin{align*}
    \sqrt{\bar{q}_{\theta_j}(x,m)} &=a \sqrt{q_{\theta_1}(x,m)} + b \sqrt{q_{\theta_2}(x,m)}\\
    &=(a \sqrt{p_{\theta_1}^{(m)}(x^{(m)})} + b \sqrt{p_{\theta_2}^{(m)}(x^{(m)})}) \sqrt{p_{\theta_{\star}}(x^{(-m)} \mid x^{(m)}) \Prob(M=m \mid x)},
\end{align*}
for all $m$ and almost all $x$. Thus, defining pointwise a.s, $\sqrt{\bar{p}_{\theta_j}^{(m)}(x^{(m)})}=a \sqrt{p_{\theta_1}^{(m)}(x^{(m)})} + b \sqrt{p_{\theta_2}^{(m)}(x^{(m)})}$, we see that under MAR,
\begin{align*}
      &\E_{(X,M) \sim \Prob_{\theta_3}}\left[\sqrt{\frac{\bar{q}_{\theta_2}(X,M)}{\bar{q}_{\theta_1}(X,M)}}\right]\\
      &= \sum_{m} \int \sqrt{\frac{\bar{q}_{\theta_2}(x,m)}{\bar{q}_{\theta_1}(x,m)}} p_{\theta_3}(x) \Prob(M=m \mid x) \mathrm{d}x  \\
      &=\sum_{m} \int \sqrt{\frac{\bar{p}^{(m)}_{\theta_2}(x^{(m)})}{\bar{p}^{(m)}_{\theta_1}(x^{(m)})}} p_{\theta_3}(x) \Prob(M=m \mid x) \mathrm{d}x \\
        &=\E_{(X,M) \sim \Prob_{\theta_3}}\left[\sqrt{\frac{\bar{p}^{(M)}_{\theta_2}(X^{(M)})}{\bar{p}^{(M)}_{\theta_1}(X^{(M)})}}\right].
\end{align*}
This shows the result. The second inequality, follows with analogous arguments.
\end{proof}

\begin{cor}[Basic Hellinger testing under MAR Missingness]\label{cor_basictest}
Assume that Assumptions \ref{asm_true_MDM} -- \ref{asm_positive_MDM} hold. Moreover, assume that there exists $\theta_{\star} \in \Theta$ such that $x \mapsto p_{\theta_{\star}}(x)$ is continuous and $\supp(p_{\theta}) \subset \supp(p_{\theta_{\star}})$ for all $\theta \in \Theta$. Then given any $n$, $\varepsilon > 0$ and $p_{\theta_1}$, with $\He(p_{\theta^*},p_{\theta_1}) > \varepsilon$, there exists a test $\psi_n=\psi_n(\data)$, such that
\begin{align}
    \E_{\data\sim \Pjointn} [\psi_n(\data)] \leq e^{-n \frac{\delta}{8}\varepsilon^2}
\end{align}
and
\begin{align}
  \sup_{\theta: \He(p_{\theta^*},p_{\theta}) < \varepsilon \sqrt{\delta}/\sqrt{24}  }  \E_{\data\sim \Pthetajointn} (1-\psi_n(\data)) \leq e^{-n \frac{\delta}{8}\varepsilon^2}.
\end{align}
\end{cor}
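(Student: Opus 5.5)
We use the likelihood-ratio test built from \Cref{amazintheorem}, applied with $\theta_1:=\theta^*$ and $\theta_2:=\theta_1$ (the alternative centre), and then convert $\HeMAR$ into $\He$ via \Cref{Hellingerbounding}. Let $\bar p_{\theta^*},\bar p_{\theta_1}$ be the densities from \Cref{amazintheorem}. Define
\[
\psi_n(\data)=\mathbf{1}\Bigl\{\prod_{i=1}^n \frac{\bar p^{(M_i)}_{\theta_1}(X_i^{(M_i)})}{\bar p^{(M_i)}_{\theta^*}(X_i^{(M_i)})}>1\Bigr\}.
\]
This test uses only the observed data.

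Under the null, Markov's inequality applied to the square root of the product gives $\E_{\Pjointn}[\psi_n]\le \prod_i \E_{\Pjoint}\bigl[\sqrt{\bar p^{(M)}_{\theta_1}/\bar p^{(M)}_{\theta^*}}\bigr]$, using independence of the pairs $(X_i,M_i)$. We apply the first inequality of \Cref{amazintheorem} with $\theta_3=\theta^*$. In the theorem's labelling the roles are swapped relative to the display we want, so we should use the version whose last term is $\HeMAR^2(\cdot,\theta^*)$ taken at the null point. Since the theorem is symmetric in construction, we use the bound $1-\tfrac16\HeMAR^2(p_{\theta^*},p_{\theta_1})+\HeMAR^2(p_{\theta^*},p_{\theta^*})$, and the last term vanishes. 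If the stated form only allows the $\theta_2$-centred remainder, we relabel $(\theta_1,\theta_2)$ and use the second inequality.

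Under the alternative, let $\theta$ be close to $\theta_1$. Then $\E_{\Pthetajointn}[1-\psi_n]\le \prod_i\E_{\Pthetajoint}\bigl[\sqrt{\bar p_{\theta^*}^{(M)}/\bar p_{\theta_1}^{(M)}}\bigr]\le \bigl(1-\tfrac16\HeMAR^2(p_{\theta^*},p_{\theta_1})+\HeMAR^2(p_{\theta_1},p_\theta)\bigr)^n$, by the second inequality with $\theta_3=\theta$. Here $\Pthetajoint$ is the model in which $M$ keeps the true MAR mechanism, which is exactly the setting of \Cref{amazintheorem}.

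It remains to convert to $\He$ and use $1-u\le e^{-u}$. By \Cref{Hellingerbounding}, $\HeMAR^2(p_{\theta^*},p_{\theta_1})\ge\delta\He^2(p_{\theta^*},p_{\theta_1})>\delta\varepsilon^2$ and $\HeMAR^2(p_{\theta_1},p_\theta)\le\He^2(p_{\theta_1},p_\theta)$. If $\He(p_{\theta_1},p_\theta)<\varepsilon\sqrt\delta/\sqrt{24}$, the bracket is at most $1-\delta\varepsilon^2/6+\delta\varepsilon^2/24=1-\delta\varepsilon^2/8$, and the null bracket is at most $1-\delta\varepsilon^2/6\le 1-\delta\varepsilon^2/8$. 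Both error probabilities are therefore bounded by $e^{-n\delta\varepsilon^2/8}$.

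I read the supremum in the statement as being over $\theta$ with $\He(p_{\theta_1},p_\theta)<\varepsilon\sqrt\delta/\sqrt{24}$, i.e.\ centred at $\theta_1$, since this is what Assumption \ref{asm_test2} requires. Centring at $\theta^*$ would contradict the null bound. The main point needing care is therefore this bookkeeping of which Hellinger remainder appears in each inequality of \Cref{amazintheorem}. It should also be checked that $\bar p_{\theta_1}/\bar p_{\theta^*}$ is well defined almost everywhere, which follows from the linear-combination form of the square roots given in its proof.
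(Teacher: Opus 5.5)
Your proposal is correct and is essentially the paper's proof. It uses the same test built from the densities $\bar p_{\theta^*},\bar p_{\theta_1}$ of \Cref{amazintheorem}, Markov's inequality with independence, the comparison $\delta\He^2\le\HeMAR^2\le\He^2$ from \Cref{Hellingerbounding}, and the arithmetic $1-\delta\varepsilon^2/6+\delta\varepsilon^2/24=1-\delta\varepsilon^2/8$.

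Your bookkeeping remarks also match what the paper's proof actually uses, despite typos in the displayed statements. The null-side remainder is centred at the null point, so it vanishes for $\theta_3=\theta^*$, and the alternative neighbourhood is $\He(p_{\theta_1},p_\theta)<\varepsilon\sqrt{\delta}/\sqrt{24}$, i.e.\ the choice $\He^2(p_{\theta_1},p_\theta)<\delta\varepsilon^2/24$ made in the proof. One caveat: your relabelling fallback only gives a single test if the construction returns the same pair $\bar p_{\theta^*},\bar p_{\theta_1}$ under both labellings.
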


\begin{proof}
    Let $\bar{p}_{\theta^*}, \bar{p}_{\theta_1}$ be the densities attached to $p_{\theta^*}, p_{\theta_1}$ as in Theorem \ref{amazintheorem}. Then, we define the test $\psi_n$ as:
    \begin{align}
        \psi_n(\data)=\mathbf{1}\left\{ \prod_{i=1}^{n}\sqrt{\frac{\bar{p}^{(M_i)}_{\theta_1}(X_i^{(M_i)})}{\bar{p}^{(M_i)}_{\theta^*}(X_i^{(M_i)})}} > 1\right\}.
    \end{align}
    Then using Markov's inequality, the i.i.d. sampling of $(X_i^{(M_i)}, M_i)$ Theorem \ref{amazintheorem} and \Cref{Hellingerbounding},
    \begin{align*} 
        \E_{\data\sim \Pjointn} [\psi_n(\data)]\leq \left( 1 - \frac{1}{6}\HeMAR^2(p_{\theta^*}, p_{\theta_1}) \right)^n
        \leq \left( 1 - \frac{\delta}{6}\He^2(p_{\theta^*}, p_{\theta_1}) \right)^n
        \leq e^{-n \frac{\delta}{6}\He^2(p_{\theta^*}, p_{\theta_1})}\leq e^{-n \frac{\delta}{8}\varepsilon^2}
    \end{align*}
 Similarly for any $\theta$ such that $\He(p_{\theta^*},p_{\theta}) < \varepsilon/5 $   
 \begin{align*}
     \E_{\data\sim \Pthetajointn} (1-\psi_n(\data)) &\leq \left( \E_{(X,M) \sim \Prob_{\theta}}\left[\sqrt{\frac{\bar{p}^{(M)}_{\theta^*}(X^{(M)})}{\bar{p}^{(M)}_{\theta_1}(X^{(M)})}}\right]\right)^n\\
     & \leq \left( 1 - \frac{1}{6}\HeMAR^2(p_{\theta^*}, p_{\theta_1}) + \HeMAR^2(p_{\theta_1}, p_{\theta})\right)^n\\
     & \leq \left( 1 - \frac{\delta}{6}\He^2(p_{\theta^*}, p_{\theta_1}) + \He^2(p_{\theta_1}, p_{\theta})\right)^n\\
     & \leq e^{-n \frac{\delta}{6}\He^2(p_{\theta^*}, p_{\theta_1})  + n\He^2(p_{\theta_1}, p_{\theta})}
 \end{align*}
Choosing $\He^2(p_{\theta_1}, p_{\theta}) <  \delta \varepsilon^2/24$, gives 
\begin{align*}
    -\frac{\delta}{6}\He^2(p_{\theta^*}, p_{\theta_1})  + \He^2(p_{\theta_1}, p_{\theta}) <-\frac{\delta \varepsilon^2}{6}  + \frac{\varepsilon^2 \delta}{24} = \frac{\delta \varepsilon^2}{8},
\end{align*}
 leading to the result.
\end{proof}

\begin{lemma}\label{Lem_Priorboundlemmas}
Assume that there exists $\theta_{\star} \in \Theta$ such that $x \mapsto p_{\theta_{\star}}(x)$ is continuous and $\supp(p_{\theta}) \subset \supp(p_{\theta_{\star}})$ for all $\theta \in \Theta$. Then for every $p_{\theta_1}$ and $p_{\theta_2}$ and any $0 < \epsilon < 0.4$,
\begin{align}\label{eq_priorboundH1}
   \E_{(X,M)\sim \Prob_{\theta_1}} \left[ \log\left( \frac{p_{\theta_1}(X^{(M)})}{p_{\theta_2}(X^{(M)})} \right) \right] &\leq \HeMAR^2(p_{\theta_1},p_{\theta_2})(1 + 2\logm\epsilon) \nonumber \\ 
    &+ 2\E_{(X,M)\sim \Prob_{\theta_1}}\!\left[\log\left(\frac{p_{\theta_1}(X^{(M)})}{p_{\theta_2}(X^{(M)})}\right)\mathbf{1}\left \{\frac{p_{\theta_2}(X^{(M)})}{p_{\theta_1}(X^{(M)})} \leq \epsilon \right \}\right].
\end{align}
and
\begin{align}\label{eq_priorboundH2}
\E_{(X,M)\sim \Prob_{\theta_1}} \left[ \log\left( \frac{p_{\theta_1}(X^{(M)})}{p_{\theta_2}(X^{(M)})} \right)^2 \right] &\leq \HeMAR^2(p_{\theta_1},p_{\theta_2})(12 + 2\logm^2\epsilon)\\ 
    &+ 8\E_{(X,M)\sim \Prob_{\theta_1}}\!\left[\log\left(\frac{p_{\theta_1}^{(M)}(X^{(M)})}{p_{\theta_2}^{(M)}(X^{(M)})}\right)^2\mathbf{1}\left \{\frac{p_{\theta_2}^{(M)}(X^{(M)})}{p_{\theta_1}^{(M)}(X^{(M)})} \leq \epsilon \right \}\right].
\end{align}
\end{lemma}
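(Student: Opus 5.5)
The plan is to mimic the proof of Lemma B.2 of \cite{Fundamentals}, the complete-data analogue, by reducing to a single density on the enlarged space $\mathcal{X}\times\{0,1\}^d$, exactly as in the proof of Theorem \ref{amazintheorem}. For $j\in\{1,2\}$ I would define
\[
q_{\theta_j}(x,m)=p_{\theta_j}^{(m)}(x^{(m)})\,p_{\theta_{\star}}(x^{(-m)}\mid x^{(m)})\,\Prob(M=m\mid x).
\]
By the support and continuity assumption on $p_{\theta_\star}$ and by MAR via \eqref{eq:importantMARimplication}, these are valid probability densities with respect to the product of Lebesgue and counting measure. Their likelihood ratio is $q_{\theta_1}(x,m)/q_{\theta_2}(x,m)=p_{\theta_1}^{(m)}(x^{(m)})/p_{\theta_2}^{(m)}(x^{(m)})$, because the common factor cancels. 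The computation in the proof of Theorem \ref{amazintheorem} gives $\He^2(q_{\theta_1},q_{\theta_2})=\HeMAR^2(p_{\theta_1},p_{\theta_2})$.

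The next step is to identify the expectations. Any function $g(x^{(m)},m)$ of the observed data has the same expectation under $q_{\theta_1}$ as under $\Prob_{\theta_1}$. Indeed, integrating out $x^{(-m)}$ gives in both cases $\sum_m\int g\,p^{(m)}_{\theta_1}(x^{(m)})\Prob(M=m\mid x^{(m)})\,\mathrm{d}x^{(m)}$. Under $\Prob_{\theta_1}$ this uses MAR to replace $\Prob(M=m\mid x)$ by $\Prob(M=m\mid x^{(m)})$ inside the $x^{(-m)}$-integral. Under $q_{\theta_1}$ it uses that $p_{\theta_\star}(\cdot\mid x^{(m)})$ integrates to one. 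Applying this with $g=\log(q_{\theta_1}/q_{\theta_2})$, with its square, and with the truncated versions carrying the indicator $\{q_{\theta_2}/q_{\theta_1}\le\epsilon\}$, turns every expectation in \eqref{eq_priorboundH1} and \eqref{eq_priorboundH2} into the corresponding complete-data quantity for the pair $(q_{\theta_1},q_{\theta_2})$.

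Finally I would apply Lemma B.2 of \cite{Fundamentals} to $q_{\theta_1},q_{\theta_2}$. That lemma holds for arbitrary probability densities on a general measure space and any $0<\epsilon<0.4$. Its conclusion is $K\le \He^2(1+2\logm\epsilon)+2P[\log(p/q)\mathbf{1}\{q/p\le\epsilon\}]$, together with the analogous bound for $V$ with constants $12$, $2\logm^2\epsilon$ and $8$. Substituting $\He^2(q_{\theta_1},q_{\theta_2})=\HeMAR^2(p_{\theta_1},p_{\theta_2})$ and the identified expectations yields both displays.

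I expect the main obstacle to be the well-definedness and measurability bookkeeping rather than any inequality. The conditional $p_{\theta_\star}(x^{(-m)}\mid x^{(m)})$ has to make sense on the support of $p^{(m)}_{\theta_j}$, and the set where $p^{(m)}_{\theta_2}=0<p^{(m)}_{\theta_1}$ has to be handled, where both sides may be $+\infty$ so the inequality holds trivially. Should the cited lemma's constants differ slightly, the fallback is to redo its elementary proof directly on $(x,m)$-space, since it only uses pointwise inequalities for $\log$ on $\{r\ge\epsilon\}$ and $\{r<\epsilon\}$ and integrates them against $q_{\theta_1}$.
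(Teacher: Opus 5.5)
Your proposal is correct and follows essentially the same route as the paper: lift to the auxiliary densities $q_{\theta_j}(x,m)=p_{\theta_j}^{(m)}(x^{(m)})\,p_{\theta_{\star}}(x^{(-m)}\mid x^{(m)})\,\Prob(M=m\mid x)$ on $\mathcal{X}\times\{0,1\}^d$. Use MAR to identify $\He^2(q_{\theta_1},q_{\theta_2})$ with $\HeMAR^2(p_{\theta_1},p_{\theta_2})$, and the $q_{\theta_1}$-expectations of observed-data functionals with their $\Prob_{\theta_1}$-counterparts. Then invoke Lemma B.2 of \cite{Fundamentals}.

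One minor point: the $q_{\theta_1}$ side of your expectation identity also needs MAR, to pull $\Prob(M=m\mid x)$ out of the $x^{(-m)}$-integral. Only after that does the fact that $p_{\theta_\star}(\cdot\mid x^{(m)})$ integrates to one finish the computation.
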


\begin{proof}
Defining again for $j \in \{1,2 \}$,
\begin{align*}
q_{\theta_j}(x,m)=p_{\theta_j}^{(m)}(x^{(m)}) p_{\theta_{\star}}(x^{(-m)} \mid x^{(m)}) \Prob(M=m \mid x),    
\end{align*}
with distribution $Q_{\theta_j}$, we can use Lemma B.2. of \cite[Appendix B]{Fundamentals} to get that for $0 < \varepsilon < 0.4$,
\begin{align*}
\E_{(X,M)\sim Q_{\theta_1}} \left[ \log\left( \frac{q_{\theta_1}(X, M)}{q_{\theta_2}(X, M)} \right) \right] &\leq \He^2(q_{\theta_1},q_{\theta_2})(2 + 2\logm\epsilon)\\ 
    &+ 2\E_{(X,M)\sim Q_{\theta_1}}\!\left[\log\left(\frac{q_{\theta_1}(X, M)}{q_{\theta_2}(X, M)}\right)\mathbf{1}\left \{\frac{q_{\theta_2}(X, M)}{q_{\theta_1}(X, M)} \leq \epsilon \right \}\right].
\end{align*}
and
\begin{align*}
\E_{(X,M)\sim Q_{\theta_1}} \left[ \log\left( \frac{q_{\theta_1}(X, M)}{q_{\theta_2}(X, M)} \right)^2 \right] &\leq \He^2(q_{\theta_1},q_{\theta_2})(12 + 2\log_-^2\epsilon)\\ 
    &+ 8\E_{(X,M)\sim Q_{\theta_1}}\!\left[\log\left(\frac{q_{\theta_1}(X, M)}{q_{\theta_2}(X, M)}\right)^2\mathbf{1}\left \{\frac{q_{\theta_2}(X, M)}{q_{\theta_1}(X, M)} \leq \epsilon \right \}\right].
\end{align*}
It was already shown in \Cref{amazintheorem} that $\He^2(q_{\theta_1},q_{\theta_2})=\HeMAR^2(p_{\theta_1},p_{\theta_2})$ under MAR. Moreover, we see that
\begin{align*}
    &\E_{(X,M)\sim Q_{\theta_1}} \left[ \log\left( \frac{q_{\theta_1}(X, M)}{q_{\theta_2}(X, M)} \right)^2 \right]\\
    &=\sum_{m} \int \log\left( \frac{q_{\theta_1}(x, m)}{q_{\theta_2}(x, m)} \right)^2 q_{\theta_1}(x, m) \mathrm{d}x \\
     &=\sum_{m} \int \log\left( \frac{p_{\theta_1}^{(m)}(x^{(m)})}{p_{\theta_2}^{(m)}(x^{(m)})} \right)^2 p_{\theta_1}^{(m)}(x^{(m)}) \int p_{\theta_{\star}}(x^{(-m)} \mid x^{(m)}) \Prob(M=m \mid x) \mathrm{d}x^{(-m)} \mathrm{d}x^{(m)}\\
     &=\sum_{m} \int \log\left( \frac{p_{\theta_1}^{(m)}(x^{(m)})}{p_{\theta_2}^{(m)}(x^{(m)})} \right)^2 p_{\theta_1}^{(m)}(x^{(m)}) \int p_{\theta_1}(x^{(-m)} \mid x^{(m)}) \Prob(M=m \mid x) \mathrm{d}x^{(-m)} \mathrm{d}x^{(m)}\\
     &=\sum_{m} \int \log\left( \frac{p_{\theta_1}^{(m)}(x^{(m)})}{p_{\theta_2}^{(m)}(x^{(m)})} \right)^2 p_{\theta_1}(x)   \Prob(M=m \mid x) \mathrm{d}x\\
     &=\E_{(X,M)\sim \Prob_{\theta_1}} \left[ \log\left( \frac{p_{\theta_1}(X^{(M)})}{p_{\theta_2}(X^{(M)})} \right)^2 \right],
\end{align*}
where we again used the MAR condition. With exactly the same reasoning, we have
\begin{align*}
    \E_{(X,M)\sim Q_{\theta_1}}\!\left[\log\left(\frac{q_{\theta_1}(X, M)}{q_{\theta_2}(X, M)}\right)^2\mathbf{1}\left \{\frac{q_{\theta_2}(X, M)}{q_{\theta_1}(X, M)} \leq \epsilon \right \}\right]=\E_{(X,M)\sim \Prob_{\theta_1}}\!\left[\log\left(\frac{q_{\theta_1}(X, M)}{q_{\theta_2}(X, M)}\right)^2\mathbf{1}\left \{\frac{q_{\theta_2}(X, M)}{q_{\theta_1}(X, M)} \leq \epsilon \right \}\right].
\end{align*}
This shows \eqref{eq_priorboundH2}, while \eqref{eq_priorboundH1} follows analogously.
\end{proof}

\begin{lemma}\label{lem_Gaussianbound}
    Let for $y=(y_1,y_2)$, $y_1 \in \R^{d_1}$, $y_2 \in \R^{d_2}$, $d_1 + d_2=d$, 
    \[
    p(y)=\int \phi(y-z; \Sigma) dF(z),
    \]
    for an arbitrary cdf $F: \R^d \to [0,1]$. We partition $\Sigma$ according to the dimension of $y_1$, $y_2$:
\[
    \Sigma = \begin{pmatrix} \Sigma_{11} & \Sigma_{12} \\ \Sigma_{21} & \Sigma_{22} \end{pmatrix}.
\]
Then for $p^{(2)}(y_2)=\int \phi(y_2-z_2;\Sigma_{22} ) dF(z_2)$ and $\Sigma_{11\cdot 2} = \Sigma_{11} - \Sigma_{12}\Sigma_{22}^{-1}\Sigma_{21}$,
\begin{align}\label{eq_conditionalexpression}
    p(y_1 \mid y_2)= \int \phi \left(y_1 - z_1 - \Sigma_{12}\Sigma_{22}^{-1}(y_2 - z_2), \Sigma_{11\cdot 2}\right) \frac{\phi(y_2 - z_2, \Sigma_{22})}{p^{(2)}(y_2)} dF(z),
\end{align}
and, with $\lambda_1(\Sigma) \leq \lambda_2(\Sigma) \leq \ldots, \lambda_d(\Sigma)$ the ordered eigenvalues of $\Sigma$,
\begin{align}\label{eq_conditionalbound}
    \sup_{y_1, y_2}  p(y_1 \mid y_2)\leq  \frac{1}{\left(2\pi\lambda_{1}(\Sigma)\right)^{d_1/2}}.
\end{align}

\end{lemma}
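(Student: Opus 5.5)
The plan is to prove the two claims of Lemma \ref{lem_Gaussianbound} in turn: first the mixture representation \eqref{eq_conditionalexpression}, then the uniform bound \eqref{eq_conditionalbound}, which follows from it.

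\textbf{Step 1: the representation \eqref{eq_conditionalexpression}.} For each fixed $z$, factor the Gaussian kernel with the standard block formula:
\[
\phi(y-z;\Sigma)=\phi(y_2-z_2;\Sigma_{22})\,\phi\bigl(y_1-z_1-\Sigma_{12}\Sigma_{22}^{-1}(y_2-z_2);\Sigma_{11\cdot 2}\bigr).
\]
This is the usual Schur-complement identity: both the quadratic form and the determinant split, since $\det\Sigma=\det\Sigma_{22}\det\Sigma_{11\cdot2}$. Integrating this identity over $y_1$ gives $p^{(2)}(y_2)=\int\phi(y_2-z_2;\Sigma_{22})\,dF(z)$, where $F$ is the full mixing measure and the integrand depends only on $z_2$. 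Dividing $p(y)$ by $p^{(2)}(y_2)$ then yields \eqref{eq_conditionalexpression}. Since $\phi>0$ everywhere, $p^{(2)}(y_2)>0$, so the division is legitimate.

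\textbf{Step 2: the bound \eqref{eq_conditionalbound}.} The weights $w_{y_2}(z)=\phi(y_2-z_2;\Sigma_{22})/p^{(2)}(y_2)$ integrate to one against $dF$. Hence $p(y_1\mid y_2)$ is an average of the first Gaussian factor, and
\[
p(y_1\mid y_2)\le \sup_u \phi(u;\Sigma_{11\cdot2})=(2\pi)^{-d_1/2}\det(\Sigma_{11\cdot2})^{-1/2}.
\]

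\textbf{Step 3: controlling the eigenvalues of $\Sigma_{11\cdot2}$.} It remains to show $\det\Sigma_{11\cdot2}\ge\lambda_1(\Sigma)^{d_1}$, which follows once every eigenvalue of $\Sigma_{11\cdot2}$ is at least $\lambda_1(\Sigma)$. Use the fact that $\Sigma_{11\cdot2}^{-1}$ equals the $(1,1)$ block of $\Sigma^{-1}$. A principal submatrix of $\Sigma^{-1}$ has largest eigenvalue at most $\lambda_d(\Sigma^{-1})=1/\lambda_1(\Sigma)$, by Cauchy interlacing or directly from the Rayleigh quotient. Therefore $\lambda_{\min}(\Sigma_{11\cdot2})\ge\lambda_1(\Sigma)$, which gives \eqref{eq_conditionalbound}.

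The main obstacle is this last eigenvalue step. The rest is routine Gaussian algebra, the only measure-theoretic point being Fubini, which applies because all integrands are nonnegative.
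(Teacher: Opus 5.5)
Your proposal is correct and follows the paper's proof: the same block factorization of $\phi(y-z;\Sigma)$, the same marginal $p^{(2)}$, the same averaging over normalized weights, and the same reduction to $\det\Sigma_{11\cdot2}\ge\lambda_1(\Sigma)^{d_1}$. The only difference is the eigenvalue step, where the paper cites an external Schur-complement eigenvalue inequality and you prove the needed lower bound directly from $\Sigma_{11\cdot2}^{-1}=(\Sigma^{-1})_{11}$ and the Rayleigh quotient, which makes your argument self-contained.
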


\begin{proof}

By Fubini and the fact that the marginal of a Gaussian is again Gaussian, the marginal $p^{(2)}(y_2)$ is given as
\begin{align*}
    p^{(2)}(y_2)=\int \int \phi(y-z; \Sigma) dF(z) dy_2 =\int \phi(y_2-z_2;\Sigma_{22} ) dF(z_2),
\end{align*}
as in the Theorem statement. Next, using the well-known form of the Gaussian conditional distribution, we can write
\begin{align*}
    \int \phi(y-z; \Sigma) dF(z)=\int \phi \left(y_1 - z_1 - \Sigma_{12}\Sigma_{22}^{-1}(y_2 - z_2), \Sigma_{11\cdot 2}\right) \phi(y_2 - z_2, \Sigma_{22})dF(z),
\end{align*}
whereby we split up $\phi(y-z; \Sigma)=q(y_1,y_2 \mid z)$ into $q(y_1 \mid y_2, z)$ (the conditional Gaussian in the integral) and $q(y_2\mid z)$ (the marginal Gaussian in the integral). Thus, the conditional distribution becomes
\begin{align*}
    p(y_1 \mid y_2) = \frac{p(y_1, y_2)}{p^{(2)}(y_2)}=\int \phi \left(y_1 - z_1 - \Sigma_{12}\Sigma_{22}^{-1}(y_2 - z_2), \Sigma_{11\cdot 2}\right) \frac{\phi(y_2 - z_2, \Sigma_{22})}{p^{(2)}(y_2)} dF(z),
\end{align*}
as claimed.

We now bound $\phi \left(y_1 - z_1 - \Sigma_{12}\Sigma_{22}^{-1}(y_2 - z_2), \Sigma_{11\cdot 2}\right)$. First, the largest value of a Gaussian distribution with covariance matrix $\Sigma_{11\cdot 2}$ (independent of its mean) is bounded as
\begin{align*}
  \sup_{y \in \R^{d_1}}\phi \left(y_1 - z_1 - \Sigma_{12}\Sigma_{22}^{-1}(y_2 - z_2), \Sigma_{11\cdot 2}\right) \leq \frac{1}{(2\pi)^{d_1/2}\det(\Sigma_{11\cdot 2})^{1/2}}.
\end{align*}
Since this expression is independent of the mean $z_1, y_2$, it in fact holds uniformly over all $y_1$, $y_2$ and $z_2$. Moreover, using the Schur property of \cite[Theorem 5]{Schurproperty}, we have
\[
\lambda_1(\Sigma) \leq \lambda_i(\Sigma_{11\cdot 2}) \leq \lambda_d(\Sigma).
\]
Since $\det(\Sigma_{11\cdot 2})=\prod_{j=}^{d_1} \lambda_j(\Sigma_{11\cdot 2})$, we may further bound 
\begin{align*}
\frac{1}{(2\pi)^{d_1/2}\det(\Sigma_{11\cdot 2})^{1/2}} \leq \frac{1}{(2\pi)^{d_1/2}\lambda_1(\Sigma)^{d_1/2}},
\end{align*}
showing that 
\begin{align*}
  \sup_{y \in \R^{d_1}, y_2 \in \R^{d_2}, z_1 \in \R^{d_1}}\phi \left(y_1 - z_1 - \Sigma_{12}\Sigma_{22}^{-1}(y_2 - z_2), \Sigma_{11\cdot 2}\right) \leq \frac{1}{(2\pi \lambda_1(\Sigma))^{d_1/2}}.
\end{align*}
Thus, it follows that, for any $(y_1, y_2) \in \R^d$,
\begin{align*}
    p(y_1 \mid y_2) \leq \frac{1}{(2\pi \lambda_1(\Sigma))^{d_1/2}} \int \frac{\phi(y_2 - z_2, \Sigma_{22})}{p^{(2)}(y_2)} dF(z) =\frac{1}{(2\pi \lambda_1(\Sigma))^{d_1/2}},
\end{align*}
proving \eqref{eq_conditionalbound}.

\end{proof}

\clearpage
{\small
\bibliographystyle{apalike}
\bibliography{reference}
}


\end{document}